\newtheorem{theorem}{Theorem}[section]
\newtheorem{corollary}[theorem]{Corollary}
\newtheorem{proposition}[theorem]{Proposition}
\newtheorem{lemma}[theorem]{Lemma}
\theoremstyle{definition}
\newtheorem{definition}[theorem]{Definition}
\theoremstyle{definition}
\newtheorem*{definition*}{Notational Conventions}
\theoremstyle{remark}
\newtheorem{remark}[theorem]{Remark}
\theoremstyle{definition}
\newtheorem{example}[theorem]{Example}
\theoremstyle{definition}
\newtheorem{construction}[theorem]{Construction}
\theoremstyle{definition}
\newtheorem{assumption}[theorem]{Assumption}
\newcommand{\uvar}{\mathord{\relbar}}
\newcommand{\icat}{\mathbf{Cat}_\infty}
\newcommand{\Sp}{\mathcal{S}}
\newcommand{\Mon}{\mathbf{Mnd}}
\newcommand{\LMod}{\mathbf{LMod}}
\newcommand{\Monoid}{\mathbf{Mon}}
\newcommand{\Md}{\mathbf{RMd}}
\newcommand{\Ladj}{\mathbf{LAdj}}
\newcommand{\PTh}{\operatorname{\mathbf{PreTh}}}
\newcommand{\Th}{\operatorname{\mathrm{Th}}}
\newcommand{\Mt}{\mu}
\newcommand{\Kl}{\mathrm{Kl}}
\newcommand{\colim}{\operatorname{\mathrm{Colim}}}
\newcommand{\sSet}{\text{Set}_\Delta}
\newcommand{\Map}{\mathrm{Map}} 
\newcommand{\Fun}{\mathrm{Fun}} 
\newcommand{\End}{\operatorname{\mathrm{End}}}
\newcommand{\EndO}{\operatorname{\underline{\mathrm{End}}}}
\newcommand{\Prsh}{\operatorname{\mathrm{Pr}}}
\newcommand{\Mod}{\operatorname{\mathrm{Mod}}}
\title{Higher Theories and Monads}
\author{Simon Henry and Nicholas J. Meadows}
\begin{document}

\maketitle

\begin{abstract}
We extend Bourke and Garner's idempotent adjunction between monads and pretheories to the framework of $\infty$-categories and we use this to prove many classical results about monads in the $\infty$-categorical framework. Amongst other things, we show that the category of algebras for an accessible monads on a locally presentable $\infty$-category $\mathcal{E}$ is again locally presentable, and that a diagram of accessible monads on a locally presentable $\infty$-category admits a colimit. Our results also provide a new and simpler way to construct and describe monads in terms of theories. 
\end{abstract}

\tableofcontents

\section{Introduction}

At the present time, monads on $\infty$-categories are arguably difficult to work with. In \cite{lurieHA}, Jacob Lurie developed a relatively nice theory of monads on $\infty$-categories as a byproduct of his theory of $\infty$-operads and proved the Barr-Beck monadicity theorem for $\infty$-categories. Essentially, a monad is defined there as a monoid object in the monoidal $\infty$-category of endofunctors. However, this theory remains relatively difficult to use in practice due to the fact that unpacking all the definitions involved in the previous sentence takes a lot work (we review this in \cref{sec:monads_prelim}). Also many classical theorems about monads have not yet been proven in this context. For example, it does not seem possible to deduce from \cite{lurieHA}\footnote{Lurie's work contains some results about colimits in category of algebras, but as far as we know, in the case of monads they only applies when the monad preserves colimits and hence colimits of algebras are just colimits in the underlying category.} that the category of algebras for an accessible monad on a cocomplete category has all colimits.

Riehl and Verity proposed an alternative, simpler, definition of monads in \cite{riehl2016homotopy} for which they also proved the Barr-Beck monadicity criterion. But it is also more model dependent than Lurie's definition as it relies on a strict action of a simplicial monoid on a quasi-category.

This paper is meant to be a toolbox filling some of these gaps and offering a new way to work with (most) monads on $\infty$-categories using only basic $\infty$-category theory instead of Lurie's theory of operads and in an essentially model independent way. This is mostly based on an $\infty$-categorical adaptation of the work on Bourke and Garner in \cite{bourke2019monads} for $1$-categorical monads.

Versions of the monad-theory adjunction have appeared in the category theory literature since the 1960s, beginning with Linton's result (\cite{linton1966some}). In \cite{bourke2019monads}, Bourke and Garner developed a very general monad-theory adjunction, which encompassed many, if not all, of the previously known constructions. Disregarding the enriched category theoretic aspect for simplicity, if $\mathcal{A} \subset \mathcal{E}$ is a small dense full subcategory, an $\mathcal{A}$-pretheory is just a bijective on objects (or essentially surjective) functors $\mathcal{A} \to \mathcal{K}$, with $\mathcal{K}$ a small $\infty$-category. Any monad $M$ on $\mathcal{E}$ has an attached pretheory, called its theory, which is the full subcategory of the Kleisli category of $M$ of objects that are in $\mathcal{A}$.

Given an $\mathcal{A}$-pretheory $\mathcal{A} \to \mathcal{K}$ one defines the category of $\mathcal{K}$-models in $\mathcal{E}$ as objects $X \in \mathcal{E}$ whose restricted Yoneda embeddings in $\Prsh(\mathcal{A})$ have an extension to a presheaf on $\mathcal{K}$. That is, it can be expressed as as a pullback square:

\[\begin{tikzcd}
  \Mod_{\mathcal{E}}(\mathcal{K}) \ar[d] \ar[r] \ar[dr,phantom,"\lrcorner"very near start] & \Prsh(\mathcal{K}) \ar[d] \\
\mathcal{E} \ar[r] & \Prsh(\mathcal{A}) 
\end{tikzcd} \]

Now, Bourke and Garner show that under the assumption that $\mathcal{E}$ is locally presentable, the functor $\Mod_{\mathcal{E}}(\mathcal{K}) \to \mathcal{E}$ is a monadic right adjoint. In particular, it gives a monad $\Mt^\mathcal{K}$ associated to $\mathcal{K}$ which is characterized by the property that $\Mt^\mathcal{K}$-algebras are the same as $\mathcal{K}$-models.

Finally, they show that these two constructions (from monads to pretheory and pretheory to monads) are adjoint to each other and form an idempotent adjunction, i.e. induces an equivalence of categories between their essential images. The object in the images are respectively called $\mathcal{A}$-theories, and $\mathcal{A}$-Nervous monads, as they are exactly the monads that satisfy the conclusion of the nerve theorem.

In the present paper, we will generalize these results to the $\infty$-categorical context. While Bourke and Garner generalize all this to an enriched setting (where $\mathcal{E}$, $\mathcal{A}$ and $\mathcal{K}$ are all $V$-enriched categories and $M$ is a $V$-enriched monad for $V$ a nice enough monoidal category), we will restrict to the unenriched setting (as presented above) as we feel the theory of enriched $\infty$-categories is not yet developed enough for this.

\bigskip

In \cref{sec:Monads_as_Kleisli} we also show that the category of monads on an $\infty$-category $\mathcal{C}$ is equivalent (though the construction of the Kleisli category) with the $\infty$-category of essentially surjective left adjoint functors $\mathcal{C} \to \mathcal{K}$. This result is not directly related to the main goals of the paper, but it follows from the methods developed in the paper and is fairly similar to the construction of the Monad-theory adjunction. This result produce a much simpler description of the $\infty$-category of monads, which is why we decided to include it.

\bigskip

The main kind of application of our results is to deduce several structural theorems about monads, such as the existence of colimits of monads and colimits in the $\infty$-category of algebras for a monad, by looking instead at colimits of theories and colimits in the category of models of a theory. In order to do this, one needs to show that most monads are actually $\mathcal{A}$-nervous monads for $\mathcal{A}$ a large enough dense subcategory. This is achieved using an $\infty$-categorical generalization of the work of Berger, Mellies and Weber in \cite{berger2012monads} where they showed that a large class of monads, that they call ``monads with arities'', satisfy a nerve theorem (that is are nervous monads). In particular, their results show that any $\lambda$-accessible monad on a locally $\lambda$-presentable category is $\mathcal{A}$-nervous for $\mathcal{A}$ the full subcategory of $\lambda$-presentable objects. We generalize this to accessible monads on $\infty$-categories in \cref{sec:general_consequence}. Using this, we show that:

\begin{itemize}

\item For any accessible monad on a locally presentable $\infty$-category the category of $M$-algebras is locally presentable, in particular it has all colimits. Indeed, the category of models of an $\mathcal{A}$-pretheory is easily seen to be locally presentable. See \cref{cor5.5}.

\item Any small diagram $I \to \Mon_\mathcal{E}$ of accessible monads on a locally presentable $\infty$-category $\mathcal{E}$ has a colimits in the $\infty$-category $\Mon_{\mathcal{E}}$ of monads on $\mathcal{E}$. Moreover an algebra for the colimit monad $\colim_i M_i$ is an object of $\mathcal{E}$ equiped with compatible structure of $M_i$ algebra for all $i$. More concretely, we have:

  \[ \mathcal{E}^{\colim_{i \in I} M_i} \simeq \lim_{i\in I} \mathcal{E}^{M_i}, \]

where $\mathcal{E}^M$ denotes the category of $M$-algebras for a monad $M$ and the limit on the right uses the forgetful functors induced by the morphisms of monads between the $M_i$. This is proven using the fact that colimits of $\mathcal{A}$-pretheories are easy to understand (they are just colimits in the $\infty$-category $\icat$ of $\infty$-categories) and the monad-theory adjunction preserves colimits. See \cref{cor:colimits_of_monads}.
\end{itemize}

\bigskip

A second type of application of our result is to construct examples of monads on $\infty$-categories from (pre)theories. Pretheories are much easier to work with directly, since they are just essentially surjective functors of $\infty$-categories. We treat in detail the case of the monads for $E_1,E_2$ and $E_\infty$ algebras in \cref{sec:E1_algebra}, and many other more involved examples are in \cref{sec:chu_haugseng}. In many of these examples $\mathcal{A}$ and $\mathcal{K}$ can be taken to be (nerve of) $1$-categories.

This can be thought more generally as a procedure to extend a classical monad $M_0$ on a $1$-category to an ``$\infty$-monad'' $M$ on an $\infty$-category by viewing the theory of $M_0$ as an $\infty$-categorical theory, and applying the monad-theory adjunction. To be more precise, assume we have $\mathcal{E}$ a locally presentable $\infty$-category, with $\mathcal{E}_O \subset \mathcal{E}$ a subcategory that is (equivalent to the nerve of) a locally presentable $1$-category. For example, $\mathcal{E}$ could be a category of presheaves of spaces on a $1$-category and $\mathcal{E}_0$ is the full subcategory of presheaves that are levelwise discrete (i.e. equivalent to presheaves of sets). If now $M$ is an ordinary monad on the $1$-category $\mathcal{E}_0$ which is $\mathcal{A}$-nervous for $\mathcal{A} \subset \mathcal{E}_0$ then, assuming $\mathcal{A}$ is also dense in $\mathcal{E}$, one can consider the monad on $\mathcal{E}$ associated by the monad-theory adjunction to the $\mathcal{A}$-theory of $M$. We will not develop this point of view much further, but many examples we mention in this paper can be thought as special case of this. The $E_1$ monad is obtained from the free monoid monad on Sets (as a subcategories of spaces). All the examples mentioned at the end of \cref{sec:chu_haugseng} can also be thought of as being obtained this way. The examples the monads for $E_2$ and $E_\infty$-algebras treated in \cref{sec:E1_algebra} can also be thought in this way, but with $\mathcal{E}_0$ and $\mathcal{A}$ being $2$-categories instead of $1$-categories.

\bigskip

We conclude this introduction by mentioning some closely related work:

Another approach to the monad-theory correspondence in $\infty$-categorical context has been developed very recently and independently from ours by R.~Kositsyn in \cite{kositsyn2021completeness}. Compared to our approach, Kositsyn uses more abstract methods, relying on the theory of $(\infty,2)$-categories. He also uses the description of monads as lax functors from the terminal category, while we take a more elementary approach following more closely Lurie's theory of monads from \cite{lurieHA}. Also, Kositsyn's work focuses on generalizing the notion of ``monads with arities'' from \cite{berger2012monads} (which we discuss in section \cref{sec:general_consequence}) while we consider the slightly more general notion of ``nervous monads'' from \cite{bourke2019monads}. While the gain in generality from using nervous monads instead of monads with arities is not essential by itself, it allows one to see the monad-theory equivalence as a special case of a more general monad-pretheory adjunction. The notion of pretheory is much simpler and has better category-theoretic properties than the various notion of theories considered. This makes pretheories much easier to handle when dealing with examples and is key in our construction in \cref{sec:general_consequence} of colimits of nervous monads and accessible monads on locally presentable categories, using colimits of pretheories.

In \cite{haugseng2020lax}, R.~Haugseng has developed a more general theory of monads in an $(\infty,2)$-category and proves it is equivalent to both Lurie's and Riehl-Verity's approach to monads (hence clarifying the equivalence between the two). We expect a large part of our preliminary results could be deduced from \cite{haugseng2020lax}. However, Haugseng's work relies on the some (as of yet unproven) assumptions about the Gray tensor products of $\infty$-categories, so we have decided to give independent and more elementary proof of these results.

Finally, our work is closely related to Chu and Haugseng's work on algebraic patterns from \cite{chu2019homotopy} and the precise relation is discussed in \cref{sec:chu_haugseng}. Essentially, algebraic patterns correspond to the special case of ``(pre)theories'' as above that represent parametric right adjoint cartesian monads (or polynomial monads in the terminology of \cite{chu2019homotopy}) on presheaf $\infty$-categories. Of course, it is not true that the results in \cite{chu2019homotopy} are all special cases of our results: parametric right adjoint cartesian monads have more structure than general monads and this translates into a better behaved theory in this special case.

\section{ Notation and preliminaries}

While we will try to give model independent argument whenever possible, we generally work within the framework of Jacob Lurie's books \cite{lurieHTT} and \cite{lurieHA}. An $\infty$-category is by definition a quasicategory, i.e. a simplicial set satisfying the appropriate lifting property. We refer to \cite{lurieHTT} for the basic theory of $\infty$-categories. We often will write objects (or 0-simplices) in an $\infty$-category by lower case letters, such as $x, y$. We call the 1-simplices of an $\infty$-category \emph{edges} or \emph{1-morphisms}. An edge is said to be an equivalence if and only if it represents an equivalence in the \emph{homotopy category} of an $\infty$-category (see \cite[Section 1.2.3]{lurieHTT} for the definition of the homotopy category). 

Given two objects $x, y$ in an $\infty$-category $\mathcal{C}$, we will write $\Map_{\mathcal{C}}(x, y)$ for the space of maps between $x$ and $y$. We will be working in a relatively model-independant manner, so it does not matter which of the (equivalent) models of mapping spaces from \cite[Section 1.2.2]{lurieHTT} we use. An \emph{equivalence of $\infty$-categories} is just an equivalence in Joyal's model structure for $\infty$-categories. That is, it induces an equivalence of homotopy categories, as well as  induces weak equivalences of mapping spaces. We will refer to fibrations in Joyal's model structure as \emph{quasi-fibrations}. Quasi-fibrations between quasi-categories have a nice characterization as \emph{isofibrations} (see \cite[Corollary 2.4.6.5]{lurieHTT}).

We will write $X^{K}$ for the internal hom in simplicial sets. If $X$ is an $\infty$-category, then $X^{K}$ is also an $\infty$-category and we write often write $\Fun(K, X)$, to emphasize that this is the \emph{$\infty$-category of functors} from $K$ to $X$. 
\\

By a \emph{simplicial category}, we mean a simplicially enriched category. 
Given a simplicial category $\mathcal{C}$, we will write $N(\mathcal{C})$ for its homotopy coherent nerve (see \cite[Definition 1.1.5]{lurieHTT}). It should be noted that in the case we regard an ordinary category as an enriched category with discrete mapping spaces, this recovers the ordinary nerve construction. 

Recall that a \emph{natural transformation} of maps of $\infty$-categories $f, g : \mathcal{C} \rightarrow \mathcal{D}$ is just a map $T : \mathcal{C} \times \Delta^{1} \rightarrow \mathcal{D}$ so that $T|_{\mathcal{C} \times \{ 0\}} = f, T|_{\mathcal{C} \times \{ 1\}} = g$. This is the same as a morphism in the functor $\infty$-category $\Fun(\mathcal{C},\mathcal{D})$.
 A natural transformation $T$ is a called a \emph{natural isomorphism} if corresponds to an invertible morphism in $\Fun(\mathcal{C}, \mathcal{D})$. We often write $T_{x} = T|_{\{x \} \times \Delta^{1}}$ which is an arrow in $f(x) \to g(x)$ in $\mathcal{D}$, and is called the \emph{component of $T$ at $x$}. We recall that:

\begin{lemma}\label{lem1.7}
Suppose that $T : \mathcal{C} \times \Delta^{1} \rightarrow \mathcal{D}$ is a natural transformation. The following are equivalent:
\begin{enumerate}
\item{$T$ is a natural isomorphism.}
\item{For each $x \in \mathcal{C}$, $T_{x}$ is an equivalence. }
\end{enumerate}

In other words, a natural transformation is a natural isomorphism iff each component is an equivalence. 
 
\end{lemma}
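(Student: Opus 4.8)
The plan is to prove the two implications separately; $(1)\Rightarrow(2)$ is formal and $(2)\Rightarrow(1)$ carries all the content. For $(1)\Rightarrow(2)$, I would note that for each object $x$ the restriction along $\{x\}\hookrightarrow\mathcal{C}$ defines an evaluation functor $\mathrm{ev}_x\colon\Fun(\mathcal{C},\mathcal{D})\to\mathcal{D}$ with $\mathrm{ev}_x(T)=T_x$. Since every functor of $\infty$-categories induces a functor on homotopy categories and hence sends equivalences (isomorphisms in the homotopy category, by the definition recalled above) to equivalences, and $T$ is assumed to be an equivalence in $\Fun(\mathcal{C},\mathcal{D})$, each component $T_x$ is an equivalence. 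Nothing beyond the definitions is needed here.

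For $(2)\Rightarrow(1)$ the main tool I would use is Joyal's criterion: an edge $e\colon\Delta^1\to X$ of a quasicategory $X$ is an equivalence if and only if it extends along $\Delta^1\hookrightarrow J$ to a map $J\to X$, where $J$ denotes the nerve of the contractible groupoid on two objects (the ``walking isomorphism''). Taking $X=\Fun(\mathcal{C},\mathcal{D})$ and using the exponential adjunction, proving that $T$ is a natural isomorphism becomes the problem of extending $T\colon\mathcal{C}\times\Delta^1\to\mathcal{D}$ to a map $\mathcal{C}\times J\to\mathcal{D}$, equivalently of lifting $T\colon\mathcal{C}\to\Fun(\Delta^1,\mathcal{D})$ through the restriction functor $\rho\colon\Fun(J,\mathcal{D})\to\Fun(\Delta^1,\mathcal{D})$.

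To carry out this lift, let $\Fun(\Delta^1,\mathcal{D})^{\simeq}\subseteq\Fun(\Delta^1,\mathcal{D})$ be the full subcategory spanned by those edges of $\mathcal{D}$ that are equivalences. The hypothesis that each $T_x$ is an equivalence says precisely that $T$, viewed as a functor $\mathcal{C}\to\Fun(\Delta^1,\mathcal{D})$, sends every object into $\Fun(\Delta^1,\mathcal{D})^{\simeq}$; as a functor lands in a full subcategory as soon as it does so on objects, $T$ factors through $\Fun(\Delta^1,\mathcal{D})^{\simeq}$. The key input is then a fibred form of Joyal's theorem, namely that $\rho$ restricts to a trivial fibration $\Fun(J,\mathcal{D})\to\Fun(\Delta^1,\mathcal{D})^{\simeq}$. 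Granting this, the lifting problem posed by $T$ against $\rho$, with $\emptyset\to\mathcal{C}$ on the left, is solvable because trivial fibrations have the right lifting property against all monomorphisms; the resulting $\widetilde{T}\colon\mathcal{C}\to\Fun(J,\mathcal{D})$ satisfies $\rho\widetilde{T}=T$ strictly, and its adjoint provides the desired extension $\mathcal{C}\times J\to\mathcal{D}$.

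The main obstacle is exactly the claimed trivial fibration $\Fun(J,\mathcal{D})\to\Fun(\Delta^1,\mathcal{D})^{\simeq}$: this is where the incoherent, pointwise data of ``each $T_x$ admits an inverse'' must be upgraded to a single coherent natural inverse. I would establish it either by citing the corresponding statement in \cite{lurieHTT}, or directly by using that the endpoint inclusions $\{0\},\{1\}\hookrightarrow J$ are trivial cofibrations (so that evaluation $\Fun(J,\mathcal{D})\to\mathcal{D}$ at either vertex is a trivial fibration) together with a simplex-by-simplex verification of the lifting property of $\rho$ over the subcategory of equivalences. Everything else in the argument is formal manipulation of adjunctions and lifting properties.
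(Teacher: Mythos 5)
Your argument is correct in outline, but it takes a genuinely different route from the paper's, and essentially all of its weight rests on the lemma you defer. The paper handles both directions at once by a one-line reduction: an edge of an $\infty$-category is an equivalence precisely when it exhibits its target as a (co)limit of the one-point diagram given by its source, so the statement becomes the assertion that (co)limits in $\Fun(\mathcal{C},\mathcal{D})$ are computed pointwise, which is \cite[Corollary 5.1.2.3]{lurieHTT}. Your route instead goes through Joyal's characterization of equivalences via the walking isomorphism $J$, together with its fibred strengthening that the restriction $\rho\colon \Fun(J,\mathcal{D})\to\Fun(\Delta^1,\mathcal{D})^{\simeq}$ is a trivial fibration; granting that, your reduction is sound (including the point that a functor landing objectwise in a full subcategory factors through it, and that $\rho$ does land in $\Fun(\Delta^1,\mathcal{D})^{\simeq}$ because functors preserve equivalences). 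Two caveats on the key lemma, though. First, it does not appear verbatim in \cite{lurieHTT}, so ``citing the corresponding statement'' is not immediate; the result is due to Joyal. Second, your sketched direct proof undersells the difficulty: knowing that the endpoint evaluations $\Fun(J,\mathcal{D})\to\mathcal{D}$ are trivial fibrations does not by itself give the lifting property of $\rho$, and the ``simplex-by-simplex verification'' is exactly the hard combinatorial content. The efficient way to make it rigorous with the tools of \cite{lurieHTT} is the marked simplicial set machinery of its Section 3.1: lifting $\rho$ against $\partial\Delta^n\subset\Delta^n$ unwinds to lifting $\mathcal{D}^{\natural}$ against the pushout-product of $(\partial\Delta^n)^{\flat}\subset(\Delta^n)^{\flat}$ with $(\Delta^1)^{\sharp}\to J^{\sharp}$; the latter is a trivial cofibration of marked simplicial sets, since $\Map^{\sharp}$ into $\mathcal{D}^{\natural}$ computes mapping spaces into the core $k(\mathcal{D})$ (a Kan complex) and $\Delta^1\to J$ is a weak homotopy equivalence, so the cartesianness of the marked model structure and fibrancy of $\mathcal{D}^{\natural}$ produce the lifts. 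In exchange for this machinery your proof stays at the level of simplicial combinatorics and yields a strictly commuting lift of $T$; the paper's proof is shorter and avoids $J$ and markings entirely, but leans on the substantial pointwise-(co)limit theorem of \cite{lurieHTT}.
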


\begin{proof} This follows from \cite[Corollary 5.1.2.3]{lurieHTT} as an object $y$ is equivalent to an object $x$ in an $\infty$-category $\mathcal{C}$ iff $y$ is a (co)limit of $x : \Delta^{0} \rightarrow \mathcal{C}$. 
\end{proof}

We denote by $\Sp$ the $\infty$-category of spaces and by $\Prsh(\mathcal{C})$ the $\infty$-category of presheaves of spaces on an $\infty$-category $\mathcal{C}$, that is $\Prsh(\mathcal{C}) = \Fun(\mathcal{C}^{op},\Sp)$. We will write $y_{\mathcal{C}} : \mathcal{C} \rightarrow \Prsh(\mathcal{C})$ for the Yoneda embedding.

We refer the reader to \cite[Section 5.2.2]{lurieHTT} for the theory of adjoint functors, as well as related concepts such as counit transformations. 
In classical category theory, one can verify that functors form an adjoint pair by specifying the unit and counit of the adjunction, and verifying that they satisfy the triangle identities. The $\infty$-categorical counterpart of this statement, which follows, will be used several times throughout the paper:

\begin{lemma}\label{lem1.8}
Let $F : \mathcal{C} \rightarrow \mathcal{D}$, $G : \mathcal{D} \rightarrow \mathcal{C}$ be functors of $\infty$-categories. Let $\eta : id \rightarrow GF$ and $\epsilon : FG \rightarrow id$ be natural transformations. If for each object $X \in \mathcal{C}$ and $Y \in \mathcal{D}$  the two composites:

\[ F(X) \overset{F(\eta_X)}{\to} FGF(X) \overset{\epsilon_{F(X)}}{\to} F(X) \quad \text{and} \quad G(Y) \overset{\eta_{G(Y)}}{\to} GFG(Y) \overset{G(\epsilon_Y)}{\to} G(Y) \]

are equivalences, then $\eta$ is the unit of an adjunction $F \dashv G$.

\end{lemma}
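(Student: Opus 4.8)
The plan is to reduce the statement to Lurie's mapping-space criterion for unit transformations (\cite[Proposition 5.2.2.8]{lurieHTT}): the transformation $\eta$ exhibits $F$ as a left adjoint of $G$, with $\eta$ as unit, precisely when for every $X \in \mathcal{C}$ and $Y \in \mathcal{D}$ the composite map of spaces
\[ \phi_{X,Y}\colon \Map_{\mathcal{D}}(F(X),Y) \xrightarrow{\ G\ } \Map_{\mathcal{C}}(GF(X),G(Y)) \xrightarrow{\ \eta_X^*\ } \Map_{\mathcal{C}}(X,G(Y)) \]
is a homotopy equivalence, where the second map is precomposition with $\eta_X$. Thus everything reduces to proving that each $\phi_{X,Y}$ is an equivalence of spaces; note that $\phi_{X,Y}$ sends $f$ to $G(f)\circ \eta_X$, so in particular $\phi_{X,Y}(\mathrm{id}_{F(X)}) = \eta_X$, which is why $\eta$ will be the resulting unit.

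To establish this I would introduce the candidate built from the counit, namely $\psi_{X,Y}\colon \Map_{\mathcal{C}}(X,G(Y)) \to \Map_{\mathcal{D}}(F(X),Y)$ sending $g$ to $\epsilon_Y \circ F(g)$, and show that both round-trip composites are equivalences; since a map of spaces admitting both a left and a right homotopy inverse is an equivalence, this forces $\phi_{X,Y}$ to be an equivalence. Unwinding the definitions, $\psi_{X,Y}\circ\phi_{X,Y}$ sends $f\colon F(X)\to Y$ to $\epsilon_Y \circ FG(f)\circ F(\eta_X)$. The naturality of $\epsilon$, as a natural transformation $FG \to \mathrm{id}_{\mathcal{D}}$, supplies a homotopy (functorial in $f$) between $\epsilon_Y\circ FG(f)$ and $f\circ \epsilon_{F(X)}$, so that $\psi_{X,Y}\circ\phi_{X,Y}$ is homotopic, as a map of spaces, to precomposition with $\epsilon_{F(X)}\circ F(\eta_X)$, which is exactly the first triangle composite. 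This is an equivalence by hypothesis, and precomposition with an equivalence induces an equivalence of mapping spaces, so $\psi_{X,Y}\circ\phi_{X,Y}$ is an equivalence. Dually, $\phi_{X,Y}\circ\psi_{X,Y}$ sends $g$ to $G(\epsilon_Y)\circ GF(g)\circ \eta_X$; using the naturality of $\eta$ to replace $GF(g)\circ \eta_X$ by $\eta_{G(Y)}\circ g$, it becomes homotopic to postcomposition with the second triangle composite $G(\epsilon_Y)\circ\eta_{G(Y)}$, again an equivalence. Hence $\phi_{X,Y}$ is an equivalence for all $X,Y$, and the lemma follows.

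The main obstacle I anticipate is not the algebra of the triangle identities but the coherence bookkeeping needed to turn ``naturality of $\epsilon$'' into an honest homotopy between the two maps of spaces $f \mapsto \epsilon_Y \circ FG(f)$ and $f \mapsto f \circ \epsilon_{F(X)}$, rather than a mere agreement on $\pi_0$; this is the genuinely $\infty$-categorical input that has no analogue in the $1$-categorical proof. Concretely one must promote the naturality datum of $\epsilon$ at the pair of objects $(F(X),Y)$ to a homotopy-commutative square of mapping spaces. I would handle this by recording once, as a preliminary observation, the standard fact that a natural transformation $\alpha\colon H \to H'$ of functors of $\infty$-categories induces a homotopy-commutative square relating the maps induced by $H$ and $H'$ on mapping spaces together with pre- and postcomposition by the components of $\alpha$, and then applying it to $\alpha = \epsilon$ and $\alpha = \eta$. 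With that in hand, the remaining ingredients, that pre- and postcomposition with an equivalence induce equivalences of mapping spaces and that a map with a left and a right homotopy inverse is an equivalence, are elementary, and the argument closes as sketched.
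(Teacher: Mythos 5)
Your proof is correct and matches the paper's argument essentially step for step: both reduce to Lurie's mapping-space criterion for unit transformations, introduce the counit-induced candidate inverse $\psi_{X,Y}$ (the paper's $V_{x,y}$), use naturality of $\eta$ and $\epsilon$ to identify the two round-trip composites with post- and pre-composition by the triangle composites, and conclude by two-sided homotopy invertibility. The coherence point you flag is handled in the paper exactly as you propose: the naturality squares of mapping spaces are made homotopy-commutative by passing to the homotopy category enriched in the homotopy category of spaces.
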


By duality it is also the case that $\epsilon$ is the counit of an adjunction, but without additional assumption (for example the fact that the two composite above are equivalent to the identity) these two claims might not be compatible ($\eta$ and $\epsilon$ might not be the unit and counit of the same adjunction, typically, one of the adjunctions can be twisted by an automorphism of $F$ or $G$.)

\begin{proof}
By the definition of unit of an adjunction \cite[Proposition 5.2.2.7]{lurieHTT}, we want to show that for each $x \in \mathcal{C}, y \in \mathcal{D}$ the map 
\begin{equation}\label{unit2}
U_{x, y} : \Map_{\mathcal{D}}(Fx, y) \rightarrow \Map_{\mathcal{C}}(GFx, Gy) \xrightarrow{ (\uvar)  \circ \eta_{x}} \Map_{\mathcal{C}}(x, Gy)
\end{equation}
is an equivalence. We introduce the dual transformation

\[
V_{x, y} : \Map_\mathcal{C}(x, Gy) \rightarrow \Map_{\mathcal{D}}(Fx, FGy) \xrightarrow{\epsilon_{y} \circ (\uvar)} \Map_{\mathcal{D}}(Fx, y)
\]

Since the natural transformation $\epsilon$ and $\eta$ induces a natural tranformation on the level of enriched homotopy categories\footnote{Here we see the homotopy category as enriched in the homotopy category of spaces as in \cite[Definition 1.1.5.14]{lurieHTT}.}, we get a commutative square in the homotopy categry of spaces:

\[
\xymatrix
{
\Map_{\mathcal{C}}(x, G(y)) \ar[r]_<<<<{GF(-)} \ar[d]_{id} & \Map_{\mathcal{C}}(GF(x), GFG(y)) \ar[d]_{\eta_{x}} \\
\Map_{\mathcal{C}}(x, G(y)) \ar[r]_{\eta_{Gy} \circ (-)} & \Map_{\mathcal{C}} (x, GFG(y))
}
\]

In other words $GF(-) \circ \eta_{x} \simeq \eta_{G(y)} \circ (-)$.
We have \[ U_{x, y} \circ V_{x, y} = G(\epsilon_{y} \circ F(-)) \circ \eta_{x} = G(\epsilon_{y}) \circ GF(-) \circ \eta_{x} \simeq G(\epsilon_{y}) \circ \eta_{Gy} \circ (-) \]
so $U_{x, y} \circ V_{x, y} $ is the composition by an equivalence by our assumptions, hence $U_{x,y} \circ V_{x,y}$ is an equivalence. Similarly, we have that $V_{x, y} \circ U_{x, y} \simeq \epsilon_{y} \circ F(G(-) \circ \eta_{x}) = \epsilon_{y} \circ FG(-) \circ F(\eta_{x}) \simeq (-) \circ \epsilon_{Gx} \circ F(\eta_{x}) $, so $V_{x, y} \circ U_{x, y}$ is also an equivalence. It hence follows that $U_{x,y}$ and $V_{x,y}$ are both equivalences.  
\end{proof}

In Section \ref{sect:monads-theories}, we show that the monad-theory correspondence is an \emph{idempotent adjunction}. We will exploit the idempotence of the adjunction throughout the paper, especially in Section \ref{sec:E1_algebra}. Thus, we will review the definition and basic properties of idempotent adjunctions below:

\begin{lemma}
Suppose that $L \dashv R$ is an adjunction with counit $\epsilon$ and unit $\eta$. Then one of the following natural transformations $(\epsilon)L, R(\epsilon), \eta(R), L(\eta) $ is an equivalence if and only if each of them are equivalences. If any (and hence all) of the above natural transformations are equivalences, we say that the adjunction is \emph{idempotent}.
\end{lemma}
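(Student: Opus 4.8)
The plan is to reduce everything to the invertibility of individual edges using \cref{lem1.7}: a whiskered transformation such as $\epsilon L$ is an equivalence if and only if each component $\epsilon_{Lx}$ (for $x \in \mathcal{C}$) is, and likewise for the other three, so I will argue pointwise in the homotopy category throughout. The two triangle identities $\epsilon L \circ L\eta \simeq \mathrm{id}_L$ and $R\epsilon \circ \eta R \simeq \mathrm{id}_R$ then immediately split the four transformations into two pairs: since in each identity the composite is an identity (hence an equivalence), the two-out-of-three property for equivalences gives that $\epsilon L$ is an equivalence iff $L\eta$ is, and that $R\epsilon$ is an equivalence iff $\eta R$ is. What remains is to bridge the two pairs, that is, to show $\epsilon L$ is an equivalence iff $\eta R$ is.

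For the forward direction I assume $\epsilon L$ is an equivalence and first establish the auxiliary identity $RL\eta \simeq \eta RL$ of natural transformations $RL \to RLRL$. Pointwise this is the short computation
\[ RL\eta_x \simeq RL\eta_x\circ R\epsilon_{Lx}\circ\eta_{RLx} \simeq R(L\eta_x\circ\epsilon_{Lx})\circ\eta_{RLx} \simeq R(\mathrm{id}_{LRLx})\circ\eta_{RLx} \simeq \eta_{RLx}, \]
where the first step inserts the triangle identity $R\epsilon_{Lx}\circ\eta_{RLx}\simeq\mathrm{id}_{RLx}$, the second is functoriality of $R$, and the third uses that, $\epsilon_{Lx}$ being an equivalence, its one-sided inverse $L\eta_x$ (from $\epsilon_{Lx}\circ L\eta_x\simeq\mathrm{id}$) is two-sided, so $L\eta_x\circ\epsilon_{Lx}\simeq\mathrm{id}_{LRLx}$. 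Feeding $RL\eta \simeq \eta RL$ into the naturality square of $\eta$ at the $\mathcal{C}$-morphism $R\epsilon_y\colon RLRy\to Ry$ then gives, for each $y\in\mathcal{D}$,
\[ \eta_{Ry}\circ R\epsilon_y \simeq RLR\epsilon_y\circ\eta_{RLRy} \simeq RLR\epsilon_y\circ RL\eta_{Ry} \simeq RL(R\epsilon_y\circ\eta_{Ry}) \simeq \mathrm{id}_{RLRy}. \]
Combined with the triangle identity $R\epsilon_y\circ\eta_{Ry}\simeq\mathrm{id}_{Ry}$, this exhibits $\eta_{Ry}$ as a two-sided inverse of $R\epsilon_y$, so $\eta R$ (equivalently $R\epsilon$) is an equivalence. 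The converse implication, that $\eta R$ an equivalence forces $\epsilon L$ to be one, I obtain for free by running this same argument for the opposite adjunction $R^{\mathrm{op}}\dashv L^{\mathrm{op}}$, under which the roles of $\epsilon L$ and $\eta R$ are interchanged. Together with the two two-out-of-three equivalences from the triangle identities, this shows all four transformations are equivalences simultaneously.

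The main obstacle is precisely this bridge between the two pairs: the transformations $\epsilon L, L\eta$ have components in $\mathcal{D}$ while $R\epsilon, \eta R$ have components in $\mathcal{C}$, and one cannot pass between them by naively whiskering, since applying $R$ or $L$ does not reflect equivalences. The identity $RL\eta\simeq\eta RL$ is exactly what makes the crossing possible without any conservativity hypothesis, and proving it cleanly — rather than the routine triangle-identity bookkeeping — is the only real content. I expect the whole argument to be formally identical to the $1$-categorical one, with \cref{lem1.7} absorbing the pointwise reasoning and homotopies replacing equalities throughout.
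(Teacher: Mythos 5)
Your proof is correct and is essentially the approach the paper itself takes: the paper's proof simply cites the $1$-categorical statement (Proposition 2.8 of \cite{Towers}) and remarks that it carries over either by manipulating the counit-unit identities or by passing to the homotopy category, and your argument is precisely that manipulation, carried out pointwise in the homotopy category via \cref{lem1.7}, with the opposite-adjunction duality handling the reverse implication. Your write-up is more self-contained than the paper's citation-style proof, but there is no genuine difference in method.
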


\begin{proof}
The classical, or 1-categorical, analogue of this fact is \cite[Proposition 2.8]{Towers}. The proof given there carries forward to the $\infty$-categorical case, either because it is essentially an excercise in manipulating the counit-unit identities, or be applying the 1-categorical result to the homotopy category and the adjunction between the derived functors of $L$ and $R$.
\end{proof}

\begin{remark}\label{idempobasics}
A useful fact about idempotent adjunctions is that the restrict to an equivalence $im(R) \simeq im(L)$ between the essential images of $R$ and $L$, essentially by definition. It is also important to note that if 
$X \in im(L), Y \in im(R)$, then also by definition $LRX \simeq X, Y \simeq RLY$.
\end{remark}

\begin{remark} Given an adjunction $L \dashv R$, written $L: \mathcal{C} \leftrightarrows \mathcal{D} : R$, post-composition with $L$ and $R$ induces an adjunction:

\[ (L \circ \uvar ) : \Fun(\mathcal{T},\mathcal{C}) \leftrightarrows \Fun(\mathcal{T},\mathcal{C}) : (R \circ \uvar) \]

for any $\infty$-category $\mathcal{T}$. A natural transformation $L X \to Y$ corresponds to a natural transformation $X \to RY$ simply by functoriality of the correspondence between arrows $L(a) \to b$ and arrows $a \to R(b)$.

But on the other hand, pre-composition with $L$ and $R$ induces an adunction in the other direction: 

\[ (\uvar \circ R ) : \Fun(\mathcal{D}, \mathcal{T}) \leftrightarrows \Fun(\mathcal{T},\mathcal{C}) : ( \uvar \circ L) \]

That is there is a correspondence between natural transformation $X \circ R \to Y$ and $X \to Y \circ L$. Indeed, given a natural transformation $v: X \to Y \circ L$, one obtain a natural transformation

\[ X R \overset{v R}{\to} Y LR \overset{Y(\eta)}{\to} Y \]

where $\eta: LR \to Id$ is the counit of adjunction. The inverse construction is obtained from the counit and the unit-counit relation shows that these are inverses of each other.

\end{remark}

 We refer to section 2.4 of \cite{lurieHTT} for the general theory of Cartesian and coCartesian fibrations. The following construction allows us to describe how the coCartesian fibration classified by $F:\mathcal{C} \to \icat$ relates to the coCartesian fibration classified by $\Fun(K,F( \uvar)) : \mathcal{C} \to \icat$ for a fixed $\infty$-category $K$: 

\begin{definition}\label{def:Fk} Let $\mathcal{E} \to \mathcal{B}$ be a map of simplicial sets and $K$ any simplicial set. We denote by $F_K(\mathcal{E})$ the simplicial set obtained as the pullback:

\[
\begin{tikzcd}
F_K(\mathcal{E}) \ar[dr,phantom,"\lrcorner"very near start] \ar[d] \ar[r] & \mathcal{E}^K \ar[d] \\
\mathcal{B} \ar[r] & \mathcal{B}^K, \\
\end{tikzcd}
\]
where the bottom map is the diagonal map.

\end{definition}







\begin{proposition}\label{prop:Fk}

\begin{enumerate}

\item If $\mathcal{E} \to \mathcal{B}$ is a Cartesian or coCartesian fibration, then $F_K \mathcal{E} \to \mathcal{B}$ is as well.

\item The construction $\mathcal{E} \mapsto F_K \mathcal{E}$ is right adjoint to $\mathcal{E} \mapsto \mathcal{E} \times K$ in the $\infty$-categories of Cartesian fibrations over $\mathcal{B}$ and of coCartesian fibrations over $\mathcal{B}$.

\item If $\mathcal{E} \to \mathcal{B}$ is a coCartesian fibration, then the functor $\mathcal{B} \to \icat$ classifying $F_K(\mathcal{E})$ is equivalent to the composite of the functor $\mathcal{B} \to \icat$ classifying $\mathcal{E} \to \mathcal{B}$ with $\Fun(\mathcal{K}, \uvar) : \icat \to \icat$.

\end{enumerate}

\end{proposition}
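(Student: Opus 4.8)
The plan is to prove the three statements in order, since each builds naturally on the previous one and on the explicit pullback definition of $F_K(\mathcal{E})$ from \cref{def:Fk}. Throughout I would exploit the fact that $F_K(\mathcal{E})$ is defined as the pullback of $\mathcal{E}^K \to \mathcal{B}^K$ along the diagonal $\mathcal{B} \to \mathcal{B}^K$, so that an object of $F_K(\mathcal{E})$ over $b \in \mathcal{B}$ is precisely a functor $K \to \mathcal{E}$ landing in the fiber $\mathcal{E}_b$, i.e. an object of $(\mathcal{E}_b)^K$. This fiberwise description is what makes all three parts intuitive, and it is exactly what part (3) asserts at the level of classifying functors.

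\medskip

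For part (1), I would first establish that if $\mathcal{E} \to \mathcal{B}$ is a (co)Cartesian fibration then so is $\mathcal{E}^K \to \mathcal{B}^K$. This is a standard stability result: exponentiation by a simplicial set $K$ preserves (co)Cartesian fibrations, with the (co)Cartesian edges in $\mathcal{E}^K$ being exactly the natural transformations that are pointwise (co)Cartesian over the corresponding edge of $\mathcal{B}^K$. I would cite the relevant statement from \cite{lurieHTT} (the results on functoriality of (co)Cartesian fibrations under exponentiation). Then, since (co)Cartesian fibrations are stable under base change, pulling back $\mathcal{E}^K \to \mathcal{B}^K$ along the diagonal $\mathcal{B} \to \mathcal{B}^K$ yields that $F_K(\mathcal{E}) \to \mathcal{B}$ is again a (co)Cartesian fibration. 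The only subtlety is to check that the (co)Cartesian edges survive the base change, which follows because base change of a (co)Cartesian fibration creates (co)Cartesian edges from those in the total space lying over the pulled-back edges.

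\medskip

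For part (2), I would verify the adjunction $(\uvar \times K) \dashv F_K$ by exhibiting the natural correspondence of mapping spaces in the $\infty$-category of (co)Cartesian fibrations over $\mathcal{B}$. The key point is that morphisms over $\mathcal{B}$ are maps commuting with the projections, and by the universal property of the pullback defining $F_K(\mathcal{E})$, a map $\mathcal{D} \to F_K(\mathcal{E})$ over $\mathcal{B}$ is the same datum as a map $\mathcal{D} \to \mathcal{E}^K$ over $\mathcal{B}^K$ (via the diagonal), which by the exponential adjunction in simplicial sets is the same as a map $\mathcal{D} \times K \to \mathcal{E}$ over $\mathcal{B}$. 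One must check this correspondence is compatible with the (co)Cartesian structure, i.e. that it restricts to an equivalence on the sub-$\infty$-categories spanned by morphisms preserving (co)Cartesian edges; this uses the pointwise characterization of (co)Cartesian edges in $\mathcal{E}^K$ established in part (1).

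\medskip

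For part (3), I would identify the classifying functor of $F_K(\mathcal{E})$ with $\Fun(K,\uvar) \circ P$, where $P : \mathcal{B} \to \icat$ classifies $\mathcal{E} \to \mathcal{B}$. The cleanest route is to use the straightening/unstraightening equivalence together with the fiberwise description: the fiber of $F_K(\mathcal{E})$ over $b$ is $(\mathcal{E}_b)^K \simeq \Fun(K,\mathcal{E}_b) = \Fun(K, P(b))$, so the two functors agree on objects. To upgrade this to an equivalence of functors, I would argue that the adjunction established in part (2) identifies the right adjoint $F_K$ on (co)Cartesian fibrations with the functor on classifying maps induced by $\Fun(K,\uvar)$, since under straightening the operation $\uvar \times K$ on fibrations corresponds to postcomposition with $\uvar \times K$ on $\icat$, and $\Fun(K,\uvar)$ is right adjoint to $\uvar \times K$ on $\icat$. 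I expect part (3) to be the main obstacle, since making the passage from fiberwise data to a coherent equivalence of classifying functors precise requires care with the straightening equivalence and its interaction with the adjunction; the honest work is in checking naturality/coherence rather than the pointwise statement, which is essentially immediate.
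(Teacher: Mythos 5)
Your proposal is correct and follows essentially the same route as the paper: part (1) via \cite[Proposition 3.1.2.1]{lurieHTT} plus stability of (co)Cartesian fibrations under pullback, part (2) via the strict adjunction between $\uvar \times K$ and $F_K$ coming from the pullback definition and the exponential adjunction, and part (3) by transporting that adjunction through straightening/unstraightening, using that $\uvar \times K$ on fibrations straightens to postcomposition with $K \times \uvar$ on $\Fun(\mathcal{B},\icat)$ and uniqueness of right adjoints. The only difference is one of precision in part (2): where you ``exhibit the natural correspondence of mapping spaces'' and check compatibility with (co)Cartesian edges, the paper makes the descent from the strict adjunction to a genuine $\infty$-categorical adjunction rigorous by observing that it is a simplicially enriched Quillen adjunction for the Cartesian model structure on marked simplicial sets over $\mathcal{B}^\sharp$, which is the step your sketch would ultimately need to fill in.
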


\begin{proof}

The first point for Cartesian fibrations follows immediately from Proposition 3.1.2.1 of \cite{lurieHTT}, which claims that $\mathcal{E}^K \to \mathcal{B}^K$ is a cartesian fibration when $\mathcal{E} \to \mathcal{B}$ is, and the fact that a pullback of a cartesian fibration is a cartesian fibration. The case of coCartesian fibrations immediately follows by duality. In order to prove the second point we will need to recall some element of the proof of Proposition 3.1.2.1 in \cite{lurieHTT}. 

The idea is that it is immediate to check that the construction $\mathcal{E} \mapsto F_K \mathcal{E}$ and $\mathcal{E} \mapsto K \times \mathcal{E}$ are a simplicially enriched pair of adjoint functors on the category (in the notation of \cite{lurieHTT}) $\sSet^+/\mathcal{B}^\sharp$ of marked simplicial sets over $\mathcal{B}^\sharp$ (which is $\mathcal{B}$ with all edges marked). The core result of section 3.1.2 of \cite{lurieHTT} is Proposition 3.1.2.3 which implies that product by $K$ preserves the ``marked anodyne maps''. This implies that the right adjoint $F_k(\uvar)$ preserves the objects with the right lifting property against these maps, i.e. exactly the Cartesian fibrations. However as taking the product with $K$ preserves the cofibrations, this pair of adjoint functors actually is a Quillen adjunction on the ``cartesian model structure'' (constructed in Proposition 3.1.3.7 of \cite{lurieHTT}) on $\sSet^+/\mathcal{B}^\sharp$. This implies that these functors induce an adjunction on the corresponding $\infty$-categories, which proves the second point for cartesian fibrations. The result for coCartesian fibrations follows by duality.

For the third point, while it is a bit difficult to keep track of what classifies the functor $F_K(\mathcal{E})$, it is relatively easy to observe that $K \times \mathcal{E} \to \mathcal{B}$ is classified by $K \times F(\uvar)$ where $F : \mathcal{B} \to \icat$ is the functor classifying $\mathcal{E} \to \mathcal{B}$. Indeed, by functoriality of the straightening/unstraightening construction in $\mathcal{B}$ one deduces that $\mathcal{B} \times K \to \mathcal{B}$ classifies the constant functor with value $K$, and one then uses that the straightening/unstraightening equivalence preserves products.

It follows that the right adjoint of these two constructions are also equivalent under the straightening/unstraightening equivalence. In the category of functors $\mathcal{B} \to \icat$, the right adjoint to $F \mapsto K \times F$ is indeed $F \mapsto \Fun(K,F(\uvar))$ and the second point above show that $F_k(\uvar)$ is the right adjoint of $\mathcal{E} \to K \times \mathcal{E}$. This concludes the proof.
  
\end{proof}

\section{Monads on $\infty$-Categories}
\label{sec:monads_prelim}

In the present paper, we follow Jacob Lurie's definition of monads on $\infty$-categories, from Chapter 4.7 of \cite{lurieHA}. In this section, we briefly recall some important points of Lurie's theory of monads and we complete the proof of \cref{th:monads=monadic} which claims that the category $\Mon_\mathcal{C}$ of monads in $\mathcal{C}$ is equivalent to the opposite of the full subcategory $\Md_\mathcal{C}$ of $(\icat)/\mathcal{C}$ of monadic right adjoint functors to $\mathcal{C}$. This result is mentioned without proof by Lurie in Remark 4.7.3.8 of \cite{lurieHA}.

Lurie's definition works as follows: given an $\infty$-category $\mathcal{C}$, he constructs a monoidal $\infty$-category of endofunctor $\End(\mathcal{C})$ that acts on $\mathcal{C}$. The category $\Mon_\mathcal{C}$ of monads on $\mathcal{C}$ is then defined as the category of monoids in $\End(\mathcal{C})$. As $\End(\mathcal{C})$ acts on $\mathcal{C}$, given a monad $T$ on $\mathcal{C}$ we can look at the category $\mathcal{C}^T$ of objects of $\mathcal{C}$ endowed with an action of $T$ (the left $T$-modules) and this is what we call the $\infty$-category of $T$-algebras, or the Eilenberg-Moore category of $T$.

In \cite{lurieHA} Lurie make sense of these notions of monoids and algebras (or rather modules in the general terminology) using his formalism of $\infty$-operads. In fact,\cite{lurieHA} developed two formalisms that allow one to do this: one can use the formalism of (symmetric) $\infty$-operads, or the formalism of planar (non-symmetric) $\infty$-operads. They are shown to be equivalent in \cite[Proposition 4.1.2.11]{lurieHA} and \cite[Theorem 2.3.3.23]{lurieHA}, but lead to different combinatorics for the concrete description of monads. Here we will recall all of the relevant definitions in the formalism of planar operads, in a way as unpacked as possible.

\begin{definition}\label{def:cartMonoid} A \emph{monoid object} $M$ in an $\infty$-category $\mathcal{C}$ with finite products is a functor $M: N(\Delta^{op}) \to \mathcal{C}$ which satisfies the Segal conditions:

\begin{itemize}

\item $M([0])$ is a terminal object of $\mathcal{C}$.

\item For each $n$, the map $M([n]) \to M([1])^n$, induced by the maps $[1] \simeq \{i,i+1\} \subset [n]$ for $i=0 \dots,n-1$ is an equivalence.

\end{itemize}

The category $\Monoid(\mathcal{C})$ of monoids in $\mathcal{C}$ is the full subcategory of $\mathcal{C}^{\Delta^{op}}$ on monoids.  $M([1])$ is called the underlying object of $M$.

\end{definition}

For example, if $M = M([1])$ is the underlying object of a monoid, the multiplication map $M^2 \to M$ is obtained as the map $M^2 \simeq M([2]) \to M([1])$ induced by $[1] \simeq \{0,2\} \subset \{0,1,2\}$. The associativity and higher coherence conditions are obtained by looking at the maps between the $M([k])$ for $k\geqslant 3$.

Note that this is the definition of monoid \emph{with respect to the cartesian product.} We will later give a definition of monoids with respect to a monoidal structure, which is different (they are equivalent when the monoidal structure is cartesian by (3) of \cite[Corollary 2.4.1.8]{lurieHA} and \cite[Proposition 2.4.2.5]{lurieHA}). The same remarks apply to the next definition as well:

\begin{definition}\label{def:cartModule} A module object in an $\infty$-category $\mathcal{C}$ with finite products, is a functor $X: N(\Delta^{op}) \times \Delta^1 \to \mathcal{C}$ such that:

\begin{itemize}

\item The restriction of $X$ to $N(\Delta^{op}) \times \{1\} \simeq N(\Delta^{op})$ is a monoid object in the sense of \cref{def:cartMonoid}.

\item The maps $X([n],0) \to X([n],1) \times X([0],0)$ induced by the maps $[0] \simeq \{n\} \subset [n]$ and obvious map $(0,[n]) \to (1,[n])$ are equivalences.

\end{itemize}
The $\infty$-category $\LMod(\mathcal{C})$ of modules is the full subcategory of functors $\mathcal{C}^{N(\Delta^{op}) \times \Delta^1}$ on module objects.
\end{definition}

The category $\LMod(\mathcal{C})$ should be thought of as a category of pairs of a monoid $M$ with an $M$-module $X$. The module $M$ is the restriction of $X$ to $N(\Delta^{op}) \times \{1\}$ which is a monoid by the first assumption. The ``underlying'' object $X$ is obtained as $X=X(0,[0])$, and the action map $M \times X \to X$ is induced by $X([1],0) \simeq X([1],1) \times X([0],0) = M \times X \to X([0],0)$ induced by the unique edge $[0] \to [1]$ in $N(\Delta)$.

This intuition that $\LMod(\mathcal{C})$ is a ``category of pairs'' is made formal by the following:

\begin{proposition} The forgetful functor from $\LMod(\mathcal{C}) \to \Monoid(\mathcal{C})$ that restricts to $N(\Delta^{op}) \times \{1\}$ is a Cartesian fibration. Its fiber over a monoid $T \in \Monoid(\mathcal{C})$ is called the category of $T$-modules and is denoted $\LMod^T(\mathcal{C})$. 
\end{proposition}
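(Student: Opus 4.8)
The plan is to realize $r\colon \LMod(\mathcal{C})\to\Monoid(\mathcal{C})$ as a restriction of an evaluation functor on an arrow $\infty$-category, and to exhibit Cartesian lifts explicitly through a \emph{restriction of scalars} construction. Writing $\mathcal{D}=\Fun(N(\Delta^{op}),\mathcal{C})$, an object of $\Fun(N(\Delta^{op})\times\Delta^1,\mathcal{C})$ is the same datum as an arrow $X_0\to X_1$ in $\mathcal{D}$, where $X_i=X|_{N(\Delta^{op})\times\{i\}}$, and $r$ is the restriction of the evaluation-at-$\{1\}$ functor $\mathrm{ev}_1\colon\Fun(\Delta^1,\mathcal{D})\to\mathcal{D}$ to the full subcategories $\LMod(\mathcal{C})$ and $\Monoid(\mathcal{C})$. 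Since $r$ is obtained by restricting along the cofibration $N(\Delta^{op})\times\{1\}\hookrightarrow N(\Delta^{op})\times\Delta^1$, it is an inner fibration, and this is inherited by its restriction to full subcategories. I would then use the standard characterization that a morphism of $\Fun(\Delta^1,\mathcal{D})$, regarded as a square in $\mathcal{D}$, is $\mathrm{ev}_1$-Cartesian precisely when that square is a pullback; so it suffices to produce, for each module $X$ over a monoid $T$ and each morphism of monoids $\phi\colon T'\to T$, an $r$-Cartesian morphism lifting $\phi$ with target $X$, realized by a pullback square lying in $\LMod(\mathcal{C})$.

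First I would build the candidate lift. Let $\bar T,\bar{T'}\colon N(\Delta^{op})\times\Delta^1\to\mathcal{C}$ be obtained by pulling $T,T'$ back along the projection to $N(\Delta^{op})$, and let $\bar\phi\colon\bar{T'}\to\bar T$ be induced by $\phi$. There is a canonical natural transformation $X\to\bar T$, equal to the identity on the $\{1\}$-part and, on the $\{0\}$-part, to the module structure edges $X([n],0)\to X([n],1)=T([n])$. I then set $\phi^{*}X:=X\times_{\bar T}\bar{T'}$, the pullback in $\Fun(N(\Delta^{op})\times\Delta^1,\mathcal{C})$, together with its projection $e\colon\phi^{*}X\to X$.

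The crucial point, which I expect to be the main obstacle, is that this pullback exists even though $\mathcal{C}$ is only assumed to have finite products, and that $\phi^{*}X$ is again a module object. Pullbacks in a functor category are computed objectwise, so I must inspect the cospans in $\mathcal{C}$. Over $([n],1)$ the map $X\to\bar T$ is the identity, so the pullback is $T'([n])$. Over $([n],0)$, the module Segal condition identifies $X([n],0)\simeq X([n],1)\times X([0],0)$ in such a way that the map to $X([n],1)=T([n])$ is the first projection; since one leg of the cospan is then a product projection, the pullback exists and equals $X([0],0)\times T'([n])$, using only finite products (the pullback of a cospan one of whose legs is a projection $A\times C\to C$ is $A\times(\uvar)$). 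Reading off these values shows that $\phi^{*}X$ satisfies the module Segal conditions and restricts to $T'$ on $N(\Delta^{op})\times\{1\}$, so that $\phi^{*}X\in\LMod(\mathcal{C})$; this is exactly the restriction of scalars of $X$ along $\phi$.

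Finally I would conclude. By construction the square underlying $e\colon\phi^{*}X\to X$ is a pullback in $\mathcal{D}$, so $e$ is $\mathrm{ev}_1$-Cartesian in the full arrow category; because $e$ is a morphism of $\LMod(\mathcal{C})$ lying over the morphism $\phi$ of the full subcategory $\Monoid(\mathcal{C})$, fullness of both subcategories implies that $e$ is also Cartesian for $r$. As $X$ and $\phi$ were arbitrary, every morphism of $\Monoid(\mathcal{C})$ admits an $r$-Cartesian lift with prescribed terminal object, and combined with the fact that $r$ is an inner fibration this shows that $r$ is a Cartesian fibration. The fiber over a monoid $T$ is then, by definition, the $\infty$-category $\LMod^{T}(\mathcal{C})$ of $T$-modules.
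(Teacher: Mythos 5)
The paper does not actually prove this proposition: it is stated as a fact imported from \cite{lurieHA} (compare the discussion of the general monoidal case a few paragraphs later, where the paper simply records that ``Lurie showed that this is a cartesian fibration''). Your argument is therefore a genuinely different --- and more self-contained --- route, and its strategy is correct. The identification of $r$ with the restriction of $\mathrm{ev}_1\colon \Fun(\Delta^1,\mathcal{D})\to\mathcal{D}$, $\mathcal{D}=\Fun(N(\Delta^{op}),\mathcal{C})$, to full subcategories is a literal isomorphism of simplicial sets; $\mathrm{ev}_1$-Cartesian morphisms are exactly the pullback squares by \cite[Lemma 6.1.1.1]{lurieHTT}; your observation that the module Segal condition turns one leg of each objectwise cospan into a product projection --- so that the required pullbacks exist using only finite products, and their values again satisfy the Segal conditions --- is precisely the point that makes the statement true at this level of generality; and a Cartesian edge of the ambient fibration that happens to lie in the full subcategories is indeed Cartesian for the restricted functor, since the Cartesian property is a mapping-space condition quantified over objects, and mapping spaces are unchanged by passage to full subcategories. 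As a bonus, your proof identifies the $r$-Cartesian edges concretely as the restriction-of-scalars (levelwise pullback) morphisms, which the citation route leaves implicit.

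There is, however, one genuine gap at the end. In the quasi-categorical definition of a Cartesian fibration (\cite[Definition 2.4.2.1]{lurieHTT}, the relevant one here since the paper works in Joyal's model), the Cartesian lift of $\phi\colon T'\to T$ with target $X$ must satisfy $r(e)=\phi$ \emph{on the nose}. Your edge $e\colon\phi^{*}X\to X$ lies over the projection $\psi\colon T\times_{T}T'\to T$, which is canonically equivalent to $\phi$ but in general not equal to it, and the limit machinery producing $\phi^{*}X$ does not let you prescribe its restriction to $N(\Delta^{op})\times\{1\}$ to be literally $(T',\phi)$. The standard repair: $r$ is moreover a quasi-fibration, hence lifts equivalences with prescribed target ($\mathrm{ev}_1$ is a fibration in the Joyal model structure, being restriction along a cofibration with fibrant target, and both $\LMod(\mathcal{C})$ and $\Monoid(\mathcal{C})$ are closed under equivalence; cf.\ \cite[Corollary 2.4.6.5]{lurieHTT}). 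So choose an equivalence $\alpha\colon T'\to T\times_{T}T'$ with $\psi\circ\alpha\simeq\phi$ (a homotopy inverse of the other pullback projection does this), lift it to an equivalence $h$ in $\LMod(\mathcal{C})$ with target $\phi^{*}X$ and $r(h)=\alpha$, and fill the inner horn given by $(h,e)$ over a $2$-simplex of $\Monoid(\mathcal{C})$ witnessing $\psi\circ\alpha\simeq\phi$. The resulting edge lies over $\phi$ exactly, and it is $r$-Cartesian because equivalences are Cartesian and Cartesian edges satisfy the two-out-of-three property (\cite[Propositions 2.4.1.5 and 2.4.1.7]{lurieHTT}). With this patch your proof is complete.
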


Henceforth, when we say that $X$ is an $M$-module we mean that $X$ is an object of $\LMod(\mathcal{C})$ over $M$. We call an action of $M$ on an object $X \in \mathcal{C}$ the data of a $M$-module whose underlying object is $X$.

This allows to defines a \emph{monoidal $\infty$-category} $\mathcal{M}$ to be a monoid in $\icat$. A \emph{monoidal action} of such a monoidal $\infty$-category $\mathcal{M}$ on an $\infty$-category $\mathcal{C}$ is an action in $\icat$ in the sense above.

We will generally work with monoidal $\infty$-categories and monoidal action from ``the other side'' of the straightening/unstraightening equivalences. Instead of defining a monoidal $\infty$-category as a functor $N(\Delta^{op}) \to \icat$, we define a monoidal $\infty$-category $\mathcal{M}$ to be a coCartesian fibration $\mathcal{M}^\circledast \to N(\Delta^{op})$ which is classified by a functor satisfying the Segal conditions as in \cref{def:cartMonoid}. Similarly, an action of $\mathcal{M}$ on an $\infty$-category $\mathcal{C}$ is defined as a coCartesian fibration $\mathcal{C}^\circledast \to N(\Delta^{op}) \times \Delta^1$ classified by a functor to $\icat$ satisfying the conditions of \cref{def:cartModule}.

The symbol $\circledast$ is only here to distinguish the underlying $\infty$-categories $\mathcal{M}$ and $\mathcal{X}$, which are the fiber over respectively $[1]$ and $([0],0)$, from the domain of these coCartesian fibrations.

\begin{remark}\label{rk:strict_monoid} If an $\infty$-category $\mathcal{M}$ has a monoid structure as a simplicial set, then it has a monoidal $\infty$-category. We call this a strict monoidal $\infty$-category. Indeed, one easily sees that such a ``strict monoidal'' $\infty$-category corresponds exactly to the functor $N(\Delta^{op}) \to \icat$, which comes from the $1$-categorical functor $\Delta^{op} \to \sSet$ that takes values in $\infty$-categories and satisfies the Segal condition up to isomorphism instead of just up to equivalence. Morphisms of simplicial monoids also induces monoidal functors.

Of course, the same can be said of a monoidal action. If $\mathcal{M}$ and $\mathcal{X}$ are two $\infty$-categories and $\mathcal{M}$ is a simplicial monoid acting on the simplicial set $\mathcal{X}$, then this produces a monoidal structure on $\mathcal{M}$ and a monoidal action of $\mathcal{M}$ on $\mathcal{X}$ in the sense above. The monoidal action can be encoded as functor $\Delta^{op} \times \Delta^1 \to \sSet$ that takes values in quasi-categories and satisfies the Segal conditions up to isomorphism.
\end{remark}

Next we move to the definition of monoids and monoidal actions in monoidal $\infty$-categories. We first need to introduce the following terminology:

\begin{definition}

\begin{itemize}

\item An edge in $N(\Delta^{op})$ is said to be \emph{inert} if the corresponding arrow in $\Delta$ is an interval inclusion, i.e. of the form $[k] \simeq \{i,i+1,\dots,i+k\} \subset [n]$ for $i+k \leqslant n$.

\item An inert edge in $N(\Delta^{op}) \times \Delta^1$ is a pair $(v,f)$ of an \emph{inert} edge $v$ (in the above sense) in $N(\Delta^{op})$ and an arbitrary edge $f$ in $\Delta^1$, such that if $f$ is the identity edge of $0$ then the map $v:[n] \to [m]$  satisfies $v(n)=m$.

\item If $X^\circledast \to N(\Delta^{op})$ is a monoidal $\infty$-category or a monoidal action, an arrow in $X^\circledast$ is said to be \emph{inert} if it is coCartesian and its image in $N(\Delta^{op})$ is inert.

\item If $X^\circledast \to N(\Delta^{op}) \times \Delta^1$ is a monoidal action, an arrow in $X^\circledast$ is said to be \emph{inert} if it is coCartesian and its image in $N(\Delta^{op}) \times \Delta^1$ is inert.

\end{itemize}
\end{definition}

Intuitively, the inert edges are the arrows in $N(\Delta^{op})$ or $N(\Delta^{op}) \times \Delta^1$ such that, given a monoid object $N(\Delta^{op}) \to \mathcal{C}$  or a module object $N(\Delta^{op}) \times \Delta^1 \to \mathcal{C}$ corresponds to product projection. A general arrow encodes some operations from the monoid or module structure.

We can now give the definition of monoids, monoid actions and module objects in a general monoidal $\infty$-category.

\begin{definition}
\begin{itemize}

\item If $\mathcal{C}^\circledast \to N(\Delta^{op})$ is a monoidal $\infty$-category, a \emph{monoid object} in $\mathcal{C}$ is a section of this map that send inert edges to inert edges. The $\infty$-category $\Monoid(\mathcal{C})$ is defined as the full subcategory of the $\infty$-category of sections on monoid objects.

\item If $\mathcal{X}^\circledast \to N(\Delta^{op}) \times \Delta^1$ is a monoidal action, a \emph{module object} in $\mathcal{X}$ is a section of this map that sends inert edges to inert edges. The $\infty$-category $\LMod(\mathcal{X})$ is defined as the full subcategory of the $\infty$-category of sections on module objects.

\end{itemize}

\end{definition}

Obviously, the notion of monoid in $\mathcal{C}$ depends on the whole monoidal structure $\mathcal{C}^\circledast \to N(\Delta^{op})$ and not just on the underlying $\infty$-category $\mathcal{C}$, and the notation $\Monoid(\mathcal{C})$ is an abuse. The same applies to module objects.

Here again, the monoidal action $\mathcal{X}^\circledast \to N(\Delta^{op}) \times \Delta^1$ is a pair of a monoidal $\infty$-category $\mathcal{M}$ that acts on an $\infty$-category $\mathcal{X}$. The category $\LMod(\mathcal{X})$ is a category of pairs of a monoid object $M$ in $\mathcal{M}$, together with an object $X$ of $\mathcal{X}$ and an action of $M$ on $X$.

We sometime write $\LMod(\mathcal{X},\mathcal{M})$ when we want to emphasize the monoidal part of the action $\mathcal{X}^\circledast \to N(\Delta^{op}) \times \Delta^1$.

Similar to the case of $\infty$-categories with finite limits, if $\mathcal{X}$ is an $\infty$-category with an action of a monoidal $\infty$-category $\mathcal{M}$, then there is a forgetful functor $\LMod(\mathcal{X}) \to \Monoid(\mathcal{M})$ and Lurie showed that this is a cartesian fibration. If $A$ is a monoid object in $\mathcal{M}$ we denote by $\LMod^A(\mathcal{X})$ the fibre over $A$ of this fibration. We call it the category of $A$-modules in $\mathcal{X}$.The full subcategory whose objects are actions of $A$ on $B \in \mathcal{X}$ is denoted by $\LMod^A_{B}(\mathcal{X})$.

Before moving further, we quickly look at how these notions interact with the functions $F_K$ of \cref{def:Fk}. Let $\mathcal{M}^\circledast \to N(\Delta^{op})$ be a monoidal $\infty$-category and $\mathcal{X}^\circledast \to N(\Delta^{op}) \times \Delta^1$ a monoidal action of $\mathcal{M}$ on an $\infty$-category $\mathcal{X}$. For $K$ an $\infty$-category, we can apply the construction $F_K$ of \cref{def:Fk} to these functors to get new functors $F_K \mathcal{M}^\circledast \to N(\Delta^{op})$ and $F_K \mathcal{X}^\circledast \to N(\Delta^{op}) \times \Delta^1$. We have:

\begin{lemma}\label{lemma:Fun_is_monoidal} $F_K \mathcal{M}^\circledast \to N(\Delta^{op})$ and $F_K \mathcal{X}^\circledast \to N(\Delta^{op}) \times \Delta^1$ are a monoidal $\infty$-category and a monoidal action. They correspond, respectively, to a monoidal structure on $\Fun(K,\mathcal{M})$ and a monoidal action of $\Fun(K,\mathcal{M})$ on $\Fun(K,\mathcal{X})$.\end{lemma}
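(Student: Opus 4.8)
The plan is to reduce everything to \cref{prop:Fk}, combined with the single observation that the cotensor functor $\Fun(K,\uvar)\colon \icat \to \icat$ preserves finite products. Write $F\colon N(\Delta^{op}) \to \icat$ and $G\colon N(\Delta^{op}) \times \Delta^1 \to \icat$ for the functors classifying the monoidal $\infty$-category $\mathcal{M}^\circledast$ and the monoidal action $\mathcal{X}^\circledast$. By part (1) of \cref{prop:Fk}, applying $F_K$ to these coCartesian fibrations again yields coCartesian fibrations over $N(\Delta^{op})$ and $N(\Delta^{op}) \times \Delta^1$ respectively; so the only thing left to check is that the classifying functors satisfy the Segal conditions of \cref{def:cartMonoid} and the module conditions of \cref{def:cartModule}. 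By part (3) of \cref{prop:Fk}, these classifying functors are $\Fun(K,\uvar) \circ F$ and $\Fun(K,\uvar) \circ G$. The key input is that $\Fun(K,\uvar)$, being right adjoint to $\uvar \times K$ (as $\icat$ is cartesian closed), preserves all limits: in particular it sends the terminal object to the terminal object, so $\Fun(K,*) \simeq *$, and it preserves finite products, so $\Fun(K,\mathcal{C} \times \mathcal{D}) \simeq \Fun(K,\mathcal{C}) \times \Fun(K,\mathcal{D})$; it also preserves equivalences.

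Granting this, the verification is formal. By functoriality, the Segal map of $\Fun(K,\uvar)\circ F$ is obtained by applying $\Fun(K,\uvar)$ to the Segal map of $F$ and then using the product-preservation identification; hence $\Fun(K,F([n])) \to \Fun(K,F([1]))^n$ is $\Fun(K,\uvar)$ applied to the equivalence $F([n]) \xrightarrow{\sim} F([1])^n$, and is therefore an equivalence, while $\Fun(K,F([0])) \simeq \Fun(K,*) \simeq *$ is terminal. Thus $F_K\mathcal{M}^\circledast$ is a monoidal $\infty$-category. The same argument applied to $G$ handles the module conditions: the restriction of $\Fun(K,\uvar)\circ G$ to $N(\Delta^{op}) \times \{1\}$ is $\Fun(K,\uvar)$ applied to the monoid $G|_{N(\Delta^{op})\times\{1\}}$, hence itself a monoid by the previous sentence (and it is precisely $F_K\mathcal{M}^\circledast$); and applying $\Fun(K,\uvar)$ to the equivalences $G([n],0) \xrightarrow{\sim} G([n],1) \times G([0],0)$, then distributing $\Fun(K,\uvar)$ over the product, gives the required module equivalences. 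Finally, the fiber of $F_K\mathcal{M}^\circledast$ over $[1]$ is $\Fun(K,F([1])) = \Fun(K,\mathcal{M})$ and the fiber of $F_K\mathcal{X}^\circledast$ over $([0],0)$ is $\Fun(K,\mathcal{X})$, which identifies the resulting structures as a monoidal structure on $\Fun(K,\mathcal{M})$ and an action of it on $\Fun(K,\mathcal{X})$.

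The main obstacle—indeed essentially the only nontrivial point—is confirming that the Segal and module maps of the composite functors really are $\Fun(K,\uvar)$ applied to those of $F$ and $G$, and that $\Fun(K,\uvar)$ commutes with the finite products appearing in these conditions. Both reduce to the statement that $\Fun(K,\uvar)$ is the internal hom of the cartesian closed $\infty$-category $\icat$, hence a right adjoint preserving limits; once this is recorded, every Segal and module condition transports automatically.
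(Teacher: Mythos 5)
Your proof is correct and follows essentially the same route as the paper's: both reduce the statement to \cref{prop:Fk}, identifying the classifying functors of $F_K\mathcal{M}^\circledast$ and $F_K\mathcal{X}^\circledast$ as the composites with $\Fun(K,\uvar)$, and then invoke the fact that $\Fun(K,\uvar)$ preserves products to transport the Segal and module conditions of \cref{def:cartMonoid} and \cref{def:cartModule}. The paper's proof is simply a terser version of yours; your extra justifications (right adjointness of $\Fun(K,\uvar)$, the explicit factorization of the Segal maps, the fiber identifications) are exactly the details the paper leaves implicit.
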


\begin{proof}
 By \cref{prop:Fk} these are coCartesian fibration classified by the postcomposition of the functor classifying $\mathcal{M}^\circledast$ and $\mathcal{X}^\circledast$ with $\Fun(K,\uvar)$. As $\Fun(K,\uvar)$ preserves products, it is immediate that the corresponding functors to $\icat$ satisfies the ``Segal conditions'' of \cref{def:cartMonoid} and \cref{def:cartModule}. This immediately proves the result.
\end{proof}

\begin{lemma}\label{lem:alg_in_fun} We have natural equivalences (in fact isomorphisms) of $\infty$-categories:

\[
\begin{tikzcd}
   \LMod( F_K \mathcal{X}^\circledast) \ar[d] \ar[r,phantom,"\simeq"description] & \Fun(K,\LMod(\mathcal{X})) \ar[d] \\
   \Monoid( F_K \mathcal{M}^\circledast) \ar[r,phantom,"\simeq"description] & \Fun(K,\Monoid(\mathcal{M}))
\end{tikzcd}
\] 

compatible to the forgetful functor as represented in the diagram above.
\end{lemma}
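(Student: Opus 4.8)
The plan is to unwind both sides of the desired equivalences to their defining descriptions as full subcategories of section $\infty$-categories, and to exhibit a natural isomorphism of simplicial sets between the relevant mapping/section spaces that restricts to the monoid/module conditions. Concretely, by definition $\LMod(F_K\mathcal{X}^\circledast)$ is the full subcategory of sections of $F_K\mathcal{X}^\circledast \to N(\Delta^{op})\times\Delta^1$ sending inert edges to inert edges, and similarly $\Monoid(F_K\mathcal{M}^\circledast)$ is the full subcategory of sections of $F_K\mathcal{M}^\circledast \to N(\Delta^{op})$. By \cref{lemma:Fun_is_monoidal} these are genuine monoidal $\infty$-categories and monoidal actions, so the forgetful functor and the notions of monoid/module object make sense. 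The right-hand sides are functor $\infty$-categories, whose objects are by unwinding functors $K \to \LMod(\mathcal{X})$, i.e. again sections of $\mathcal{X}^\circledast$ sending inert to inert, but now parametrized by $K$.

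First I would establish the isomorphism at the level of all sections (before imposing the inertness condition), using the adjunction underlying \cref{def:Fk}. The key formal observation is that a section of $F_K\mathcal{E} \to \mathcal{B}$ is, by the defining pullback square of \cref{def:Fk} together with the universal property of $\mathcal{E}^K$, the same data as a map $\mathcal{B}\times K \to \mathcal{E}$ over $\mathcal{B}$ (where $\mathcal{B}\times K \to \mathcal{B}$ is the projection), which in turn is the same as a map $K \to \mathrm{Sect}(\mathcal{E}/\mathcal{B})$ into the $\infty$-category of sections. Applying this with $\mathcal{B} = N(\Delta^{op})\times\Delta^1$ and $\mathcal{E}=\mathcal{X}^\circledast$ (and with $\mathcal{B}=N(\Delta^{op})$, $\mathcal{E}=\mathcal{M}^\circledast$) gives an isomorphism of simplicial sets between sections of $F_K\mathcal{X}^\circledast$ and $\Fun(K,\mathrm{Sect}(\mathcal{X}^\circledast/N(\Delta^{op})\times\Delta^1))$, naturally in $K$ and compatibly with the forgetful map to the monoid side. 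This is essentially just the internal-hom adjunction $(\uvar\times K)\dashv (\uvar)^K$ in simplicial sets combined with the pullback definition, so it is a clean computation.

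The main point then is to check that this isomorphism carries the full subcategory cut out by the inert-edge condition on one side exactly to the full subcategory cut out on the other, i.e. that a section $s\colon N(\Delta^{op})\times\Delta^1 \to F_K\mathcal{X}^\circledast$ sends inert edges to inert edges if and only if the corresponding functor $\hat{s}\colon K \to \mathrm{Sect}(\mathcal{X}^\circledast)$ lands in $\LMod(\mathcal{X})$. The hard part is the careful analysis of inert (equivalently, coCartesian) edges in $F_K\mathcal{X}^\circledast$: by \cref{prop:Fk}(3) the coCartesian fibration $F_K\mathcal{X}^\circledast$ is classified by the postcomposition of the classifying functor of $\mathcal{X}^\circledast$ with $\Fun(K,\uvar)$, and the $\Fun(K,\uvar)$ functoriality together with \cref{lem1.7} shows that a morphism in $\Fun(K,\mathcal{X}_{[n]})$ is sent to a coCartesian edge precisely when it is pointwise (over each vertex of $K$) coCartesian. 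Thus an edge of $F_K\mathcal{X}^\circledast$ lying over an inert edge of the base is inert if and only if, for each object $k\in K$, its $k$-component is an inert edge of $\mathcal{X}^\circledast$. Translating, $\hat{s}$ sends each object of $K$ to a module object (respecting inerts) exactly when $s$ itself respects inerts, which is the required matching of subcategories.

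Finally I would assemble these: the upper horizontal equivalence is the restriction of the all-sections isomorphism to the module subcategories, the lower one is the analogous statement for monoids (a formally identical but simpler argument over $N(\Delta^{op})$ alone), and the compatibility with the vertical forgetful functors is automatic since the forgetful functor on both sides is induced by restriction along $N(\Delta^{op})\times\{1\}\hookrightarrow N(\Delta^{op})\times\Delta^1$, which commutes with the $F_K$ and $\Fun(K,\uvar)$ constructions by the naturality in \cref{prop:Fk}. I expect the genuine obstacle to be purely bookkeeping: verifying the pointwise characterization of inert edges in $F_K\mathcal{X}^\circledast$ rigorously, since this is where \cref{prop:Fk}(3) and the definition of inertness must be combined, whereas the section/functor isomorphism itself is a direct consequence of the defining pullback and the internal hom.
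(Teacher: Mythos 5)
Your proposal is correct and follows essentially the same route as the paper's proof: identify sections of $F_K \mathcal{X}^\circledast$ with functors from $K$ to sections of $\mathcal{X}^\circledast$ via the defining pullback of \cref{def:Fk} (equivalently \cref{prop:Fk}), then match the module/monoid full subcategories using the pointwise characterization of coCartesian (hence inert) edges, with compatibility of the forgetful functors coming from restriction along $N(\Delta^{op}) \times \{1\}$. The only cosmetic difference is that the paper justifies the pointwise coCartesian criterion by citing \cite[Proposition 3.1.2.1]{lurieHTT} directly, whereas you re-derive it from \cref{prop:Fk}(3) together with \cref{lem1.7}, which amounts to the same fact.
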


\begin{proof} By construction of $F_K$, or rather by the second point of \cref{prop:Fk}, the simplicial set of sections of $F_K \mathcal{X}^\circledast \to N(\Delta^{op}) \times \Delta^1$ is equivalent to the simplicial set of maps $K \times N(\Delta^{op}) \times \Delta^1 \to \mathcal{X}^\circledast$. This, in turn, is isomorphic to the simplicial set of maps from $K$ to the simplicial set of sections of $\mathcal{X}^\circledast \to N(\Delta^{op}) \times \Delta^1$. The same can be said for $\mathcal{M}^\circledast \to N(\Delta^{op})$, and these identification are compatible with the ``forgetful functors'', i.e. the restriction along $N(\Delta^{op}) \times \{1\} \to N(\Delta^{op}) \times \Delta^1$.

The $\infty$-categories mentioned in the lemma are full subcategories of these simplicial sets. To conclude the proof we just need to show that they are preserved by these isomorphisms. The proofs for monoids and module objects are exactly the same. On the side of $\LMod( F_K \mathcal{X}^\circledast)$ we are looking at the full subcategory of sections that send any inert arrow to a coCartesian lift. Though the series of isomorphisms mentioned at the beginning, these corresponds to the dotted section in
\[
\begin{tikzcd}
& \Fun(K, \mathcal{X}^\circledast) \ar[d] \\
  N(\Delta^{op}) \times \Delta^1 \ar[ur,dotted] \ar[r] &   \Fun(K, N(\Delta^{op}) \times \Delta^1) 
\end{tikzcd}
\]
that sends inert edges to coCartesian edges. The coCartesian edges with respect to the coCartesian fibration $\Fun(K, \mathcal{X}^\circledast) \to \Fun(K, N(\Delta^{op}) \times \Delta^1) $ are exactly the natural transformations that are coCartesian when evaluated at each object $k \in K$ (see \cite[Proposition 3.1.2.1]{lurieHTT}). Thus, it follows that through the series of isomorphisms above, a section of $F_K \mathcal{X}^\circledast  \to N(\Delta^{op}) \times \Delta^1 $ corresponds to a module object if and only the corresponding functor from $K$ to the simplicial set of section of $\mathcal{X}^\circledast  \to N(\Delta^{op}) \times \Delta^1 $ sends each object of $k \in K$ to a module object. This concludes the proof.

\end{proof}

\begin{lemma}\label{lem:diag_is_monoidal} If $\mathcal{M}$ is a monoidal $\infty$-category and $K$ any $\infty$-category, then the diagonal functor $\mathcal{M} \to \Fun(K,\mathcal{M})$ admits a structure of monoidal functor. \end{lemma}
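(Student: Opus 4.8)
The plan is to produce the monoidal structure directly at the level of coCartesian fibrations, never leaving the straightening/unstraightening picture that \cref{prop:Fk} and \cref{lemma:Fun_is_monoidal} have set up. By \cref{lemma:Fun_is_monoidal}, the monoidal structure on $\Fun(K,\mathcal{M})$ is encoded by the coCartesian fibration $F_K\mathcal{M}^\circledast \to N(\Delta^{op})$, whose fiber over $[1]$ is $\Fun(K,\mathcal{M})$. By definition, a monoidal functor $\mathcal{M} \to \Fun(K,\mathcal{M})$ is a map of coCartesian fibrations $\mathcal{M}^\circledast \to F_K\mathcal{M}^\circledast$ over $N(\Delta^{op})$ (a functor over the base preserving coCartesian edges) whose restriction to the fibers over $[1]$ is the diagonal. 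So it suffices to construct such a map and to check that its underlying functor is the diagonal.

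To build the map, I would invoke the second point of \cref{prop:Fk}, which says that $(\uvar)\times K$ is left adjoint to $F_K(\uvar)$ in the $\infty$-category of coCartesian fibrations over $N(\Delta^{op})$. The projection $\mathrm{pr}\colon \mathcal{M}^\circledast \times K \to \mathcal{M}^\circledast$ is a map over $N(\Delta^{op})$, and it preserves coCartesian edges: a coCartesian edge of $\mathcal{M}^\circledast \times K \to N(\Delta^{op})$ is a pair consisting of a coCartesian edge of $\mathcal{M}^\circledast$ together with an equivalence in $K$, and its first component is coCartesian. Hence $\mathrm{pr}$ is a morphism of coCartesian fibrations, and its adjoint transpose is a morphism of coCartesian fibrations $d\colon \mathcal{M}^\circledast \to F_K\mathcal{M}^\circledast$, i.e. a monoidal functor. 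Equivalently one can build $d$ by hand from the pullback defining $F_K$ in \cref{def:Fk}: a map $\mathcal{M}^\circledast \to F_K\mathcal{M}^\circledast$ over $N(\Delta^{op})$ is the same as a map $\mathcal{M}^\circledast \to (\mathcal{M}^\circledast)^K$, that is a map $\mathcal{M}^\circledast \times K \to \mathcal{M}^\circledast$, whose composite to $(N(\Delta^{op}))^K$ is the diagonal, and the projection visibly satisfies this compatibility.

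It then remains to identify the underlying functor of $d$ with the diagonal. I would do this fiberwise: over $[1]$, the pullback description identifies the fiber of $F_K\mathcal{M}^\circledast$ with $\Fun(K,\mathcal{M})$, and $d$, being transpose to $\mathrm{pr}$, sends an object $x \in \mathcal{M}$ to the functor $K \to \mathcal{M}^\circledast$, $k \mapsto \mathrm{pr}(x,k) = x$, which is the constant functor at $x$; this is exactly the diagonal. The same conclusion also follows more conceptually from the third point of \cref{prop:Fk} together with the compatibility of straightening with the two adjunctions used in its proof: $d$ straightens to the natural transformation $F \Rightarrow \Fun(K,F(\uvar))$ adjoint to the projection $K \times F(\uvar) \Rightarrow F(\uvar)$, whose component at each $[n]$ is the diagonal $F([n]) \to \Fun(K,F([n]))$, and at $[1]$ this is the diagonal $\mathcal{M} \to \Fun(K,\mathcal{M})$.

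The only delicate point, and hence the step I expect to require the most care, is this last identification: matching the abstractly produced transpose $d$ with the concrete diagonal functor. Both the fiberwise computation and the straightening argument reduce it to the elementary fact that the transpose of a projection $A \times K \to A$ under the $((\uvar)\times K) \dashv \Fun(K,\uvar)$ adjunction is the diagonal $A \to \Fun(K,A)$; the real work is only in verifying that the coCartesian-fibration adjunction of the second point of \cref{prop:Fk} is compatible, under straightening, with this adjunction in functor $\infty$-categories, which is essentially what the proof of the third point of \cref{prop:Fk} already establishes.
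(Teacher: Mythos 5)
Your proof is correct, but it takes a different route from the paper's. The paper's proof is a one-line argument on the \emph{straightened} side: since the monoidal structure on $\Fun(K,\mathcal{M})$ is, by construction, the postcomposition of the classifying functor $N(\Delta^{op})\to\icat$ of $\mathcal{M}^{\circledast}$ with $\Fun(K,\uvar)$, the family of diagonal functors, being natural in its argument, assembles into a natural transformation between these two $\icat$-valued functors, which unstraightens to the desired monoidal functor. You instead work entirely on the \emph{unstraightened} side: you produce the map $d\colon \mathcal{M}^{\circledast}\to F_K\mathcal{M}^{\circledast}$ concretely as the transpose of the projection $\mathcal{M}^{\circledast}\times K\to\mathcal{M}^{\circledast}$ under the fibration-level adjunction of the second point of \cref{prop:Fk} (equivalently, by hand from the pullback defining $F_K$), check that the projection preserves coCartesian edges so that $d$ does too, and then identify the underlying functor fiberwise with the diagonal. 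Each approach has its advantages: the paper's argument is shorter but leans on the somewhat informal assertion that the diagonal is ``natural in $\mathcal{M}$'' as a transformation $\mathrm{id}_{\icat}\Rightarrow\Fun(K,\uvar)$ — which, if one unwinds it, is itself the transpose-of-projection fact you use; your argument is longer but fully explicit at the level of simplicial sets, with the only nontrivial inputs being the pointwise characterization of coCartesian edges in $\mathcal{E}^K\to\mathcal{B}^K$ and the adjunction already established in \cref{prop:Fk}. Your closing observation — that the straightening-compatibility needed to identify $d$ with the diagonal is essentially what the proof of the third point of \cref{prop:Fk} establishes — is exactly the bridge between your argument and the paper's; had you led with that, you would have reproduced the paper's proof verbatim.
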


\begin{proof} This follows immediately from the fact that $\mathcal{M} \to \Fun(K, \mathcal{M})$ is natural in $\mathcal{M}$ and that the monoidal structure on  $\Fun(K,\mathcal{M})$ is obtained by postcomposing the functor $N(\Delta^{op}) \to \icat$ classifying the monoidal structure of $\mathcal{M}$ with $\Fun(K,\uvar)$.\end{proof}

\begin{remark}\label{rmkx} We fix $\mathcal{M}$ a monoidal $\infty$-category with an action on an $\infty$-category $\mathcal{X}$, and $K$ any $\infty$-category. For $M$ any monoid object in $\mathcal{M}$, one can use the monoidal functor of \cref{lem:diag_is_monoidal} to see $M$ as a ``constant'' monoid object in $\Fun(K,\mathcal{M})$. Through the monoidal action of $\Fun(K,\mathcal{M})$ on $\Fun(K,\mathcal{X})$ introduced by \cref{lemma:Fun_is_monoidal}, we can look at the $\infty$-category

\[ \LMod^M(\Fun(K,\mathcal{X})) \]

of $M$-modules in $\Fun(K,\mathcal{X})$. We then have, as a special case of \cref{lem:alg_in_fun} an equivalence (in fact an isomorphism)

\[ \LMod^M(\Fun(K,\mathcal{X})) \simeq \Fun(K,\LMod^M(\mathcal{X})). \]

Indeed, the left hand side corresponds to the fiber of $\LMod(\Fun(K,\mathcal{X})) \simeq \Fun(K,\LMod(\mathcal{X}))$ over $M \in \Fun(K,\Monoid(\mathcal{M}))$. However, given that $M$ is in $\Monoid(\mathcal{M})$ this actually is a fiber of $F_K(\LMod(\mathcal{X}))$, and hence can be identified with the simplicial set of functors from $K$ to the fiber of $\LMod(\mathcal{X})$ as explained in \cref{prop:Fk}. This also shows that these equivalences are natural in $M$.

\end{remark}

We will write $\End(\mathcal{C})$ for the simplicial monoid of endomorphisms of an $\infty$-category $C$. By \ref{rk:strict_monoid}, it has the structure of a monoidal $\infty$-category. In \cite{lurieHA} Lurie defines the \emph{category of monads on $\mathcal{C}$}, which we denote by $\Mon_{\mathcal{C}}$, to be the category of monoid objects in $\End(\mathcal{C})$. Given a monad $M \in \Mon_{\mathcal{C}}$ acting on a category $\mathcal{E}$, and a monad $T$ on $\mathcal{C}$, we write $\mathcal{E}^{T}$ for the category of $T$-modules.

\begin{construction}\label{cstr:EndC_actson_FunDC} Let $\mathcal{C}$ and $\mathcal{D}$ be two $\infty$-categories. In \cite{lurieHA}, Lurie construct an action of $\End(\mathcal{C})$ on $\Fun(\mathcal{D},\mathcal{C})$ by looking at the strict action of the simplicial monoid $\End(\mathcal{C})$ on the simplicial set $\Fun(\mathcal{D},\mathcal{C})$. 

This is however equivalent to the construction we discussed above by combining the action of $\Fun(\mathcal{D},\End(\mathcal{C}))$ on $\Fun(\mathcal{D},\mathcal{D})$ obtained from \cref{lemma:Fun_is_monoidal} and the monoidal functor $\End(\mathcal{C}) \to \Fun(\mathcal{D},\End(\mathcal{C}))$ from \cref{lem:diag_is_monoidal}.

Indeed, we start from the strict action of $\End(\mathcal{C})$ on $\mathcal{C}$, which can be encoded by a functor $\Delta^{op} \times \Delta^1 \to \sSet$ as discussed in \cref{rk:strict_monoid}, and our construction in \cref{lemma:Fun_is_monoidal} using $F_K$ is known (by \cref{prop:Fk}) to be equivalent to post-composing this functor by $\Fun(K,\uvar)$. But this is precisely the strict action considered in the first paragraph.

\end{construction}

From the discussion of \ref{rmkx} and \ref{lem:alg_in_fun} above we obtain

\begin{lemma}\label{lem:Action_on_functor} The natural functor

\[  \Fun(K,\mathcal{C})^T \to \Fun(K,\mathcal{C}^T ) \]

is an equivalence of $\infty$-categories, compatible to the forgetful functor to $\Fun(K,\mathcal{C})$.
\end{lemma}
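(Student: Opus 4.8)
The plan is to recognize this statement as a direct specialization of \cref{rmkx}, so that essentially all the work has already been done; the only genuine task is to match the action implicit in the notation $\Fun(K,\mathcal{C})^T$ with the constant-monoid action appearing there. First I would set up the instance of \cref{rmkx}: take $\mathcal{M} = \End(\mathcal{C})$ with its strict monoidal structure (via \cref{rk:strict_monoid}), acting on $\mathcal{X} = \mathcal{C}$ through the strict action of the simplicial monoid $\End(\mathcal{C})$ on $\mathcal{C}$, and let the monoid object be $M = T$, the given monad. By definition $\mathcal{C}^T = \LMod^T(\mathcal{C})$, so the right-hand side $\Fun(K,\mathcal{C}^T)$ is $\Fun(K,\LMod^T(\mathcal{C}))$.

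The key identification concerns the left-hand side. By definition the notation $\Fun(K,\mathcal{C})^T$ refers to $\LMod^T$ of $\Fun(K,\mathcal{C})$ with respect to Lurie's action of $\End(\mathcal{C})$ on $\Fun(K,\mathcal{C})$. The content of \cref{cstr:EndC_actson_FunDC} is precisely that this action is equivalent to the composite of the action of $\Fun(K,\End(\mathcal{C}))$ on $\Fun(K,\mathcal{C})$ furnished by \cref{lemma:Fun_is_monoidal} with the diagonal monoidal functor $\End(\mathcal{C}) \to \Fun(K,\End(\mathcal{C}))$ of \cref{lem:diag_is_monoidal}. Under this identification, $T$ acts on $\Fun(K,\mathcal{C})$ exactly as the ``constant'' monoid object obtained by pushing $T$ along the diagonal, which is the situation considered in \cref{rmkx}.

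With the matching in place, I would simply invoke \cref{rmkx} (the case $\mathcal{M} = \End(\mathcal{C})$, $\mathcal{X} = \mathcal{C}$, $M = T$), which gives the equivalence
\[ \LMod^T(\Fun(K,\mathcal{C})) \simeq \Fun(K,\LMod^T(\mathcal{C})), \]
that is, $\Fun(K,\mathcal{C})^T \simeq \Fun(K,\mathcal{C}^T)$. For the compatibility with the forgetful functor to $\Fun(K,\mathcal{C})$, I would appeal to the fact that \cref{rmkx} is a fibrewise instance of \cref{lem:alg_in_fun}, whose stated equivalences are compatible with the forgetful functors: here the forgetful functor $\LMod^T(\Fun(K,\mathcal{C})) \to \Fun(K,\mathcal{C})$ is carried to the functor $\Fun(K,\mathcal{C}^T) \to \Fun(K,\mathcal{C})$ induced by postcomposition with the forgetful functor $\mathcal{C}^T \to \mathcal{C}$.

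The main obstacle, such as it is, is purely bookkeeping: confirming that the action defining $\Fun(K,\mathcal{C})^T$ is the constant-monoid action of \cref{rmkx} rather than some other action of $T$ on $\Fun(K,\mathcal{C})$. Since that identification is exactly what \cref{cstr:EndC_actson_FunDC} established, once it is cited there is no further difficulty and the equivalence follows formally.
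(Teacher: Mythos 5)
Your proposal is correct and matches the paper's own argument: the paper derives this lemma directly from \cref{rmkx} and \cref{lem:alg_in_fun}, using \cref{cstr:EndC_actson_FunDC} to identify Lurie's strict action of $\End(\mathcal{C})$ on $\Fun(K,\mathcal{C})$ with the constant-monoid action, exactly as you do. Your write-up merely makes explicit the bookkeeping that the paper leaves implicit in the phrase ``From the discussion of \cref{rmkx} and \cref{lem:alg_in_fun} above we obtain.''
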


The final ingredient to Lurie's theory of monads is the notion of \emph{endomorphism object}. Given a monoidal $\infty$-category $\mathcal{C}$ acting on an $\infty$-category $\mathcal{X}$ and $X \in \mathcal{X}$ any object, Lurie considers the $\infty$-category $\mathcal{C}[X]$ which can informally be described as the $\infty$-category of object $Y \in \mathcal{C}$ endowed with a map $Y \otimes X \to X$ in $\mathcal{X}$ (see Definition 4.7.1.1 in \cite{lurieHA} for a more formal statement).

\begin{definition}\label{def:end_object} Let $\mathcal{C}$ be a monoidal $\infty$-category and $\mathcal{X}$ an $\infty$-category with an action of $\mathcal{C}$. An \emph{endomorphism object} for an object $X \in \mathcal{X}$ is (if it exists) a terminal object in the category $\mathcal{C}[X]$.
\end{definition}

As usual, we will, in an abuse of language, say that an object $\EndO(X) \in \mathcal{C}$ is an endomorphisms object of $X$ if it is the image of a terminal object in $\mathcal{C}[X]$ by the forgetful functor $\mathcal{C}[X] \to \mathcal{C}$. Lurie also shows in \cite[Remark 4.7.1.33 and Proposition 4.7.1.34]{lurieHA} that:

\begin{proposition} In the situation above, the $\infty$-category $\mathcal{C}[X]$ admits a monoidal structure for which the forgetful functor $\mathcal{C}[X] \to \mathcal{C}$ is monoidal.
\end{proposition}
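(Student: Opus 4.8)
Since both $\mathcal{C}[X]$ and the endomorphism-object formalism are entirely Lurie's, the most economical route — and the one I would take — is to cite \cite[Remark 4.7.1.33 and Proposition 4.7.1.34]{lurieHA} directly. Nonetheless it is worth recording the conceptual content, as it is what makes the passage from endomorphism objects to monads work. Informally, an object of $\mathcal{C}[X]$ is a pair $(Y,\alpha)$ with $Y \in \mathcal{C}$ and $\alpha \colon Y \otimes X \to X$ a map in $\mathcal{X}$, and the intended monoidal structure is ``composition of actions'': the tensor of $(Y,\alpha)$ and $(Z,\beta)$ is $(Y \otimes Z, \gamma)$, where $\gamma$ is the composite
\[
(Y \otimes Z) \otimes X \simeq Y \otimes (Z \otimes X) \xrightarrow{\,Y \otimes \beta\,} Y \otimes X \xrightarrow{\,\alpha\,} X,
\]
the first equivalence being the associativity datum of the action, and the unit is the monoidal unit $\mathbf{1}$ equipped with the unitor $\mathbf{1} \otimes X \simeq X$. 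On underlying objects this tensor is just the tensor of $\mathcal{C}$, so the forgetful functor $\mathcal{C}[X] \to \mathcal{C}$, $(Y,\alpha)\mapsto Y$, will be monoidal essentially by inspection.

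To turn this into a theorem in our framework I would construct a coCartesian fibration $\mathcal{C}[X]^\circledast \to N(\Delta^{op})$ whose fiber over $[1]$ is $\mathcal{C}[X]$ and which lies over $\mathcal{C}^\circledast \to N(\Delta^{op})$. The fiber over $[n]$ should parametrize $n$-tuples $(Y_1,\dots,Y_n)$ of objects of $\mathcal{C}$ together with the iterated action datum obtained by composing as above; concretely one realizes $\mathcal{C}[X]^\circledast$ as a suitable $\infty$-category of sections of (a pullback of) the action fibration $\mathcal{X}^\circledast \to N(\Delta^{op}) \times \Delta^1$, restricting to those sections that ``end at $X$''. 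Declaring a morphism coCartesian exactly when its image on underlying objects is coCartesian in $\mathcal{C}^\circledast$ then makes the projection $\mathcal{C}[X]^\circledast \to \mathcal{C}^\circledast$ a monoidal functor tautologically, and in particular recovers $\EndO(X)$ as (the image of) the terminal object.

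The main obstacle is verifying the Segal conditions of \cref{def:cartMonoid} for $\mathcal{C}[X]^\circledast$: one must check that the fiber over $[n]$ is genuinely equivalent to $\mathcal{C}[X]^{\times n}$ and that the active edges implement the $n$-fold composite of action maps coherently, rather than merely up to a contractible ambiguity that then has to be pinned down. This coherence is exactly the content of Lurie's Proposition 4.7.1.34, where it is arranged by defining $\mathcal{C}[X]$ through a limit/terminal-object construction that makes the associativity and unit constraints automatic. Since the statement is used here purely as a black box and nothing in it is specific to the $\infty$-categorical constructions of this paper, I would simply invoke Lurie's result rather than reproduce this verification.
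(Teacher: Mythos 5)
Your proposal is correct and matches the paper exactly: the paper gives no proof of this proposition at all, simply attributing it to \cite[Remark 4.7.1.33 and Proposition 4.7.1.34]{lurieHA}, which is precisely the citation you lead with. Your additional sketch of the intended monoidal structure (composition of action maps) and of how one would build the coCartesian fibration $\mathcal{C}[X]^\circledast \to N(\Delta^{op})$ is accurate but, as you yourself conclude, unnecessary here since the statement is used purely as a black box.
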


\begin{proposition}\label{prop:end_action} Given $\mathcal{C}$ a monoidal $\infty$-category and $\mathcal{X}$ an $\infty$-category with an action of $\mathcal{C}$, if $X \in \mathcal{X}$ admits an endomorphisms object $\EndO(X) \in \mathcal{C}$, then 
$\EndO(X)$ is a monoid object, it acts on $X$, and we have equivalences $\Map_{\Monoid(\mathcal{C})}(B, \EndO(X)) \simeq \LMod_{B}^{X}(\mathcal{X})$, natural in $B \in \Monoid(\mathcal{C})$.
\end{proposition}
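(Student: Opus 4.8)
The plan is to derive all three assertions from the universal property of the terminal object of the monoidal $\infty$-category $\mathcal{C}[X]$ supplied by the previous proposition. The organizing observation is that monoid objects in $\mathcal{C}[X]$ are precisely modules in $\mathcal{X}$ with underlying object $X$: unwinding the monoidal structure on $\mathcal{C}[X]$, a monoid structure on an object $(Y, Y \otimes X \to X)$ amounts to a monoid structure on $Y$ in $\mathcal{C}$ together with a compatible (associative and unital) action on $X$, i.e. a $Y$-module structure on $X$. Thus I would first establish an equivalence from $\Monoid(\mathcal{C}[X])$ onto the full subcategory of $\LMod(\mathcal{X})$ spanned by the modules whose underlying object is $X$, compatibly with the forgetful functors down to $\Monoid(\mathcal{C})$ — on the left the functor induced by the monoidal functor $\mathcal{C}[X] \to \mathcal{C}$, on the right the canonical projection remembering the acting monoid.

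Granting this, the first two assertions are immediate. The terminal object of any monoidal $\infty$-category carries an essentially unique monoid structure — all structure maps and coherences live in mapping spaces into a terminal object, hence in contractible spaces — and it is terminal in the associated $\infty$-category of monoids; applying the monoidal forgetful functor $\mathcal{C}[X] \to \mathcal{C}$ transports this to a monoid structure on its image $\EndO(X)$. Under the equivalence above, this terminal monoid $\widetilde{E} \in \Monoid(\mathcal{C}[X])$ corresponds to a module whose acting monoid is $\EndO(X)$ and whose underlying object is $X$, which is exactly the claim that $\EndO(X)$ is a monoid and that it acts on $X$.

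For the natural equivalence, I would argue that $U \colon \mathcal{C}[X] \to \mathcal{C}$ is a right fibration — a morphism $(Y,a) \to (Y',a')$ lying over a fixed $f \colon Y \to Y'$ is a contractible choice of identification $a \simeq a' \circ (f \otimes X)$ — whose fibre over $Y$ is the space of actions $\Map_{\mathcal{X}}(Y \otimes X, X)$, and that $\EndO(X)$ being terminal makes this the right fibration represented by $\EndO(X)$, recovering the defining property $\Map_{\mathcal{C}}(Y, \EndO(X)) \simeq \Map_{\mathcal{X}}(Y \otimes X, X)$. I would then upgrade this to monoids: since the formation of $\Monoid(\uvar)$ sends right fibrations to right fibrations and preserves terminal objects, the induced functor $\Monoid(\mathcal{C}[X]) \to \Monoid(\mathcal{C})$ is a right fibration with terminal object $\widetilde{E}$ over $\EndO(X)$, hence is represented by $\Map_{\Monoid(\mathcal{C})}(\uvar, \EndO(X))$. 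Its fibre over $B$ is, by the first paragraph, the space $\LMod^{X}_{B}(\mathcal{X})$ of $B$-module structures on $X$, and the classifying equivalence of a right fibration supplies precisely the naturality in $B$. I expect the main obstacle to be this last upgrade: verifying at the level of the underlying $\circledast$-fibrations over $N(\Delta^{op})$ that $\mathcal{C}[X]^{\circledast} \to \mathcal{C}^{\circledast}$ is a right fibration and that passing to inert-preserving sections preserves this property, so that the object-level representability transports coherently and naturally to the monoid level. This is where matching our unpacked conventions against Lurie's becomes essential, and where one may instead invoke \cite[Proposition 4.7.1.34]{lurieHA} directly.
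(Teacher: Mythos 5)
Your proposal is correct and takes essentially the same route as the paper: the paper likewise obtains the monoid structure on $\EndO(X)$ from the essentially unique terminal monoid in (Lurie's model $\mathcal{C}^{+}[X]$ of) $\mathcal{C}[X]$, identifies monoids there with modules on $X$ via Lurie's Theorem 4.7.1.34, regards all the categories involved as right fibrations over $\Monoid(\mathcal{C})$, and extracts the equivalence and its naturality in $B$ by taking fibers and invoking straightening. The obstacle you flag at the end---that passing to inert-preserving sections turns the right fibration $\mathcal{C}[X]^{\circledast} \to \mathcal{C}^{\circledast}$ into a right fibration $\Monoid(\mathcal{C}[X]) \to \Monoid(\mathcal{C})$---is resolved in the paper exactly as you suggest, by working with Lurie's better-behaved model $\mathcal{C}^{+}[X]$ (which admits a trivial fibration to $\mathcal{C}[X]$) and citing his Proposition 4.7.1.39, with your ``right fibration with terminal object is representable'' step appearing there as the zig-zag of slice categories through $\Monoid(\mathcal{C}^{+}[X])_{/T_X}$.
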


Note that the identity arrow $\EndO(X) \to \EndO(X)$ in particular corresponds to an action of the monoid $\EndO(X)$ on $X$ which we call the canonical action of $\EndO(X)$ on $X$.

\begin{proof} 
The equivalence is essentially that of \cite[Corollary 4.7.1.41]{lurieHA}, which is deduced from \cite[Corollary 4.7.1.40]{lurieHA}. However, we should note that \cite[Corollary 4.7.1.41]{lurieHA} do not explicitly claims that this equivalence is natural in $B$ (only that it is ``canonical''). It seems that the naturality of the equivalence is implicit, and is later implicitly used in the rest of Section 4.7 of \cite{lurieHA}. For this reason, we decided to explain some key points of the proof from section 4.7.1 of \cite{lurieHA} and especially clarify how the naturality follows.

A first remark is that Lurie introduces an alternative model for $\mathcal{C}[X]$, more precisely he constructs a monoidal $\infty$-category $\mathcal{C}^{+}[X]$ for each $X \in \mathcal{X}$, such that there is a trivial fibration $\mathcal{C}^{+}[X] \rightarrow \mathcal{C}[X]$ and which has slightly better properties than $\mathcal{C}[X]$. By examining the proof of \cite[Corollary 4.7.1.40]{lurieHA}, the equivalence comes from a string of equivalences 
\begin{equation}\label{moduleequiv}
\Monoid(\mathcal{C})_{/\EndO(X)} \leftarrow \Monoid(\mathcal{C}^{+}[X])_{/T_{X}} \rightarrow \Monoid(\mathcal{C}^{+}[X]) \rightarrow \LMod^{X}(\mathcal{X}),
\end{equation}
where $T_{X}$ is a terminal object of $\mathcal{C}^{+}[X]$ whose image in $\mathcal{C}$ is $\EndO(X)$. The fact that such an object exists exactly translates to the assumption that $X$ admits an endomorphism object $\EndO(X)$. As a terminal object of the monoidal $\infty$-category $\mathcal{C}^{+}[X]$, it follows from Corollary 3.2.2.5 and Proposition 4.1.3.19 of \cite{lurieHA} that $T_X$ has a monoid structure that makes it a terminal object of $\Monoid(\mathcal{C}^{+}[X])$. The monoid structure on $\EndO(X)$ is obtained from the one on $T_X$ as the functor $\mathcal{C}^{+}[X] \to \mathcal{C}$ is monoidal.

The theorem is deduced from these equivalences and the fact that all the categories involved admits right fibrations to $\Monoid(\mathcal{C})$ and all functors in (\ref{moduleequiv}) are compatible (up to equality) to these fibrations. Hence taking the fibers over a monoid $B \in \Monoid(\mathcal{C})$ in the zig-zag of equivalence (\ref{moduleequiv}) gives a series of equivalences:

\begin{equation}\label{moduleequiv2}
\Map_{\Monoid(\mathcal{C})}(B, \EndO(X)) \leftarrow (\Monoid(\mathcal{C}^{+}[X])_B)_{/T_{X}} \rightarrow \Monoid(\mathcal{C}^{+}[X])_B \rightarrow \LMod^{X}_{B}(\mathcal{X})
\end{equation}

where the $B$ index denotes fiber over $B$. The (contravariant) functoriality in $B$ of these all these constructions and the naturality of these equivalence hence follows immediately from the straightening construction.

The functor $\Monoid(\mathcal{C})_{/\EndO(X)} \to \Monoid(\mathcal{C})$ is the obvious forgetful functor and is hence a right fibration (by the dual of \cite[Corollary 2.1.2.2]{lurieHTT}). The functor $\theta : \mathcal{C}^{+}[X] \rightarrow \mathcal{C}$  constructed in \cite[Proposition 4.7.1.39]{lurieHA} induces a right fibration $\Monoid(\mathcal{C}^+[X]) \to \Monoid(\mathcal{C})$ (also by \cite[Proposition 4.7.1.39]{lurieHA}). As $T_X$ is sent to $\EndO(X)$ by this functor, this induces a right fibration $\Monoid(\mathcal{C}^+[X])_{/T_X} \to \Monoid(\mathcal{C})_{/\EndO(X)}$. This clearly equips the first three categories with right fibrations to $\Monoid(\mathcal{C})$ with the first two functor being compatible to these (by functoriality of the slice construction).

The functor $\LMod^{X}(\mathcal{X}) \to \Monoid(\mathcal{C})$ is simply the composite of the functor $\LMod^{X}(\mathcal{X}) \to \LMod(\mathcal{X})$ with the forgetful functor $\LMod(\mathcal{X}) \to \Monoid(\mathcal{C})$, it can be seen as the top of arrow in the pullback:

\[
\begin{tikzcd}
 \LMod^X(\mathcal{X}) \ar[dr,phantom,"\lrcorner"very near start] \ar[d]  \ar[r] & \{X\} \times \Monoid(\mathcal{C}) \ar[d] \\
\LMod(\mathcal{X}) \ar[r] & \mathcal{X} \times \Monoid(\mathcal{C})
\end{tikzcd}
\]

Given that the bottom map is an iso-fibration, it follows that $\LMod^{X}(\mathcal{X}) \to \Monoid(\mathcal{C})$ is a quasi-fibration. The fact that it is a right fibration will be deduced later from the equivalence with the right fibration $\Monoid(\mathcal{C}^+[X]) \to \Monoid(\mathcal{C})$ (see \cite[Corollary 4.7.1.42]{lurieHA}).

So, if we consider the diagram: 
$$
\xymatrix
{
\Monoid(C^{+}[X])  \ar[r] \ar[dr]_{\theta'} &  \LMod^{X}(\mathcal{X}) \ar[d] \\
& \Monoid(\mathcal{C})
}
$$
where the diagonal map is the map $\theta'$ mentioned above (whose fibre over $B$ is $\Monoid(\mathcal{C}^{+}[X])_B$), the horizontal map is the equivalence of \cite[Theorem 4.7.1.34]{lurieHA}, and the vertical map is the forgetful functor, which is a cartesian fibration. One can then check from the explicit construction of the horizontal map given in \cite{lurieHA} that the above diagram commutes, since all functors involved are induced by 'forgetful functors' between various full subcategories of functor categories from (nerve of) $1$-categories. Hence producing the last compatibility we needed.
\end{proof}

\begin{remark}\label{EndoAmbiguity} Consider the $\infty$-category $\icat$ of all $\infty$-categories with the usual cartesian monoidal structure. Then for any $\infty$-category $\mathcal{C} \in \icat$, its endomorphism object $\EndO(\mathcal{C})$ is just the $\infty$-category of endofunctors of $\mathcal{C}$, and  \cref{prop:end_action} makes it into a monoidal $\infty$-category acting on $\mathcal{C}$. Though in this case given that $\End(\mathcal{C})$ can simply be concretely defined as the simplicial monoid of maps $\mathcal{C} \to \mathcal{C}$ one can also obtain this monoidal structure in much more explicit way from its strictly associative monoid structure. It is fairly easy to check that the two descriptions are equivalent.
\end{remark}

Using the action of $\End(\mathcal{C})$ on $\Fun(\mathcal{D},\mathcal{C})$ mentioned in \cref{cstr:EndC_actson_FunDC}, we can specialize the notion of endomorphism object to the notion of endomorphisms monads. Following Definition 4.7.3.2 of \cite{lurieHA} we have:

\begin{definition}\label{def:Lurie_endo_monad} An \emph{endomorphism monad} $T$ for a functor $U:\mathcal{D} \to \mathcal{C}$ is a monad $T \in \Mon(\mathcal{C})=\Monoid(\End \mathcal{C})$ with an action of $T$ on $F$ such that the action map $T U \to U$ identify $T$ as an endomorphism object for $U$. \end{definition}

\begin{remark}\label{rk:def_endo_monad} Let $U : \mathcal{D} \to \mathcal{C}$ be a functor that admits an endomorphism object $\EndO(U) \in \End(\mathcal{C})$, for the action of $\End(\mathcal{C})$ on $\Fun(\mathcal{D},\mathcal{C})$ from \cref{cstr:EndC_actson_FunDC}. By \cref{prop:end_action},  $\EndO(U)$ gets a monoid (i.e. monad) structure, and a canonical action of $\EndO(U)$ on $U$, obtained from the identity map of $\EndO(U)$ through the equivalence of \cref{prop:end_action}. This monad $\EndO(U)$, with its action on $U$, is then an endomorphisms monad for $U$ in the sense of \cref{def:Lurie_endo_monad}, and any endomorphism monad is of this form (in an essentially unique way).

Indeed, saying that $T$ is a monad acting on $U$, means, by \cite[Theorem 4.7.1.34]{lurieHA}, that when we use the action map $T U \to T$ to identifies $T$ with an object of $\End(\mathcal{C})[U]$ it has a monoid structure. Now, as \cref{def:Lurie_endo_monad} asks for $T$, endowed with this map $TU \to U$, be a terminal object in $\End(\mathcal{C})[U]$ (by \cref{def:end_object}), this monoid structure is essentially unique. and makes $T$ into the terminal monoid in $\End(\mathcal{C})[U]$. 

Now, the action of $\EndO(U)$ on $U$ we mentioned is obtained from the identity of $\EndO(U)$ through the equivalence of categories $\Monoid(\End(\mathcal{C}))_{\EndO(U)} \simeq \Monoid(\End(\mathcal{C})[U])$. Since the identity is terminal in the slice category, it corresponds to a terminal object of $\Monoid(\End(\mathcal{C})[U])$, so that both description boils down to ``terminal objects in $\Monoid(\End(\mathcal{C})[U])$''.

Given this, we will denote $\EndO(U)$ the endomorphism monad of $U$ if it exists.

\end{remark}

Lemma 4.7.3.1 of \cite{lurieHA} describes the endomorphism monads of right adjoint functor in the usual way: 

\begin{proposition} If $U : \mathcal{D} \to \mathcal{C}$ is a functor with a left adjoint $F$, then $U \circ F:\mathcal{C} \to \mathcal{C}$ endowed with the map $U \circ F \circ U \to U$ given by applying $U$ to the unit of adjunction is an endomorphisms monad for $U$.
\end{proposition}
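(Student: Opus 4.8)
The plan is to identify $U\circ F$, equipped with the stated natural transformation, as the endomorphism object $\EndO(U)\in\End(\mathcal{C})$ of $U$ for the action of $\End(\mathcal{C})$ on $\Fun(\mathcal{D},\mathcal{C})$ of \cref{cstr:EndC_actson_FunDC}; once this is established, the monoid (monad) structure, the action on $U$, and the identification with an endomorphism monad all follow formally from \cref{prop:end_action} and \cref{rk:def_endo_monad}. Write $\eta:\mathrm{id}_\mathcal{C}\to UF$ and $\epsilon:FU\to\mathrm{id}_\mathcal{D}$ for the unit and counit of $F\dashv U$; recalling that the action of $G\in\End(\mathcal{C})$ on $U\in\Fun(\mathcal{D},\mathcal{C})$ is the composite $G\circ U$, the natural transformation appearing in the statement is $U\epsilon:UFU\to U$, obtained by applying $U$ to the counit $\epsilon$. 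By \cref{def:end_object} it therefore suffices to exhibit $(UF,\,U\epsilon)$ as a terminal object of $\End(\mathcal{C})[U]$, equivalently to show that $UF$ represents the functor $G\mapsto\Map_{\Fun(\mathcal{D},\mathcal{C})}(G\circ U,\,U)$ on $\End(\mathcal{C})$, with $U\epsilon$ as universal element.

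The key step is to invoke the precomposition adjunction attached to $F\dashv U$ (as in the remark on precomposition following \cref{lem1.8}): precomposition induces an adjunction $(\uvar\circ U)\dashv(\uvar\circ F)$ between $\Fun(\mathcal{D},\mathcal{C})$ and $\End(\mathcal{C})=\Fun(\mathcal{C},\mathcal{C})$. Taking $G\in\End(\mathcal{C})$ and $U\in\Fun(\mathcal{D},\mathcal{C})$, this yields a natural equivalence
\[ \Map_{\Fun(\mathcal{D},\mathcal{C})}(G\circ U,\,U)\;\simeq\;\Map_{\End(\mathcal{C})}(G,\,U\circ F), \]
natural in $G$. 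Unwinding the explicit form of this correspondence, a map $v:G\to UF$ is sent to the composite $G\circ U\xrightarrow{vU}UFU\xrightarrow{U\epsilon}U$; in particular $\mathrm{id}_{UF}$ is sent to $U\epsilon$. A morphism $(G,\alpha)\to(UF,U\epsilon)$ in $\End(\mathcal{C})[U]$ is a map $\phi:G\to UF$ together with a homotopy $U\epsilon\circ\phi U\simeq\alpha$, so its mapping space is the fiber over $\alpha$ of the equivalence above and hence contractible; thus $(UF,U\epsilon)$ is terminal, i.e.\ it is the endomorphism object $\EndO(U)$ with its canonical structure map. Feeding this into \cref{rk:def_endo_monad} then gives that $UF$, with the action $U\epsilon$, is an endomorphism monad for $U$.

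The main obstacle is the bookkeeping required to pass between the representable description used above and Lurie's concrete model $\End(\mathcal{C})[U]$ of \cref{def:end_object}: one must justify that representing $G\mapsto\Map(G\circ U,U)$ with universal element $U\epsilon$ really singles out the terminal object of that comma-type $\infty$-category (which amounts to knowing the mapping spaces there), and that the identity of $UF$ is transported to $U\epsilon$ under the equivalence — this last point is the $\infty$-categorical version of a triangle-identity computation, in which the counit of the precomposition adjunction is built out of $\epsilon$. These verifications are routine but must be carried out carefully, since the statement is precisely Lurie's Lemma 4.7.3.1 of \cite{lurieHA} reproved within the present framework; alternatively one may simply cite that lemma.
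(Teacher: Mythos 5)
Your argument is correct, but it necessarily takes a different route from the paper, for the simple reason that the paper offers no proof of this statement at all: the proposition is presented as a recollection of Lemma 4.7.3.1 of \cite{lurieHA}, and that citation is the entire justification. What you have written is essentially a reconstruction of Lurie's own proof of that lemma inside the paper's framework: the precomposition adjunction $(\uvar \circ U) \dashv (\uvar \circ F)$ from the (unlabelled) remark following \cref{lem1.8} shows that $UF$ represents the functor $G \mapsto \Map_{\Fun(\mathcal{D},\mathcal{C})}(G \circ U, U)$ with universal element $U\epsilon$; the mapping spaces of $\End(\mathcal{C})[U]$ are the homotopy fibers of $\phi \mapsto U\epsilon \circ \phi U$, so this representability is exactly terminality in $\End(\mathcal{C})[U]$, i.e.\ the condition of \cref{def:end_object}; and \cref{prop:end_action} together with \cref{rk:def_endo_monad} upgrades the endomorphism object to an endomorphism monad. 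The two points you flag as bookkeeping do go through: the fiber description of mapping spaces follows from Lurie's definition of $\mathcal{C}[X]$ as a fiber product of $\mathcal{C}$ with a slice category (the slice projection being a right fibration), and the identification of the adjunction equivalence with the formula $\phi \mapsto U\epsilon \circ \phi U$ is literally the construction given in that remark, the triangle identities making the two assignments mutually inverse. You also correctly repair a slip in the statement as printed: the structure map $UFU \to U$ is $U$ applied to the counit, not the unit. The trade-off is the expected one: the paper's citation is economical, while your argument makes the result self-contained and shows it is purely formal given the precomposition adjunction, in keeping with the paper's stated aim of elementary, model-independent proofs; as you note yourself, one could instead simply cite Lurie, which is exactly what the authors do.
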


 We can construct a functor $\Mon_\mathcal{C}^{op} \to \icat$ that sends $T$ to $\mathcal{C}^T$ by applying straightening to the Cartesian fibration $\LMod(\End(\mathcal{C})) \rightarrow \Monoid(\End(\mathcal{C}))$  associated to the action in \cref{cstr:EndC_actson_FunDC}.

\begin{proposition}\label{prop:Monad_forget_adjunction} The functor

\[ \begin{array}{ccc}
(\Mon_\mathcal{C})^{op} & \to & (\icat)_{/ \mathcal{C}} \\
T & \mapsto & \mathcal{C}^T
\end{array}
\]

Corestricted to the full subcategory of right adjoint functors admit a left adjoint that sends a right adjoint functor $U : \mathcal{D} \to \mathcal{C}$ to its endomorphism monad. 

\end{proposition}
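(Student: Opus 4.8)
The plan is to treat the endomorphism monad as an objectwise left adjoint: for each right adjoint $U$ I will show that the functor $T \mapsto \Map(U,\mathcal{C}^T)$ is corepresented by $\EndO(U)$, and then invoke the objectwise criterion for the existence of an adjoint. Write $\mathcal{R} \subseteq (\icat)_{/\mathcal{C}}$ for the full subcategory spanned by the functors admitting a left adjoint, and let $\Phi \colon (\Mon_\mathcal{C})^{op} \to \mathcal{R}$ denote the corestriction of $T \mapsto (\mathcal{C}^T \to \mathcal{C})$; this corestriction is legitimate because the forgetful functor $\mathcal{C}^T \to \mathcal{C}$ is always a right adjoint (free $T$-modules). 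By the characterization of left adjoints in terms of objectwise universal arrows \cite[Proposition 5.2.4.2]{lurieHTT}, it is enough to show that for every $U \colon \mathcal{D} \to \mathcal{C}$ in $\mathcal{R}$ the functor
\[ T \longmapsto \Map_{\mathcal{R}}(U, \Phi(T)) \colon (\Mon_\mathcal{C})^{op} \to \Sp \]
is corepresentable, with corepresenting object the endomorphism monad $\EndO(U)$.

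First I would unwind the right-hand side. As $\mathcal{R}$ is full in $(\icat)_{/\mathcal{C}}$, the space $\Map_{\mathcal{R}}(U, \Phi(T))$ is the space of lifts of $U$ through $\mathcal{C}^T \to \mathcal{C}$, i.e.\ the fiber over $U$ of $\Fun(\mathcal{D}, \mathcal{C}^T) \to \Fun(\mathcal{D}, \mathcal{C})$ (on underlying spaces). By \cref{lem:Action_on_functor} this forgetful functor is equivalent, over $\Fun(\mathcal{D},\mathcal{C})$, to the forgetful functor $\Fun(\mathcal{D},\mathcal{C})^T \to \Fun(\mathcal{D},\mathcal{C})$ attached to the action of $\End(\mathcal{C})$ on $\Fun(\mathcal{D},\mathcal{C})$ from \cref{cstr:EndC_actson_FunDC}; hence its fiber over $U$ is the space $\LMod^U_T(\Fun(\mathcal{D},\mathcal{C}))$ of actions of $T$ on the object $U$.

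Next I would apply the universal property of the endomorphism object. Since $U$ has a left adjoint $F$, its endomorphism monad $\EndO(U) \simeq U F$ exists by Lurie's Lemma 4.7.3.1 (recalled above), and \cref{prop:end_action}, applied to $\End(\mathcal{C})$ acting on $\Fun(\mathcal{D},\mathcal{C})$ and to the object $U$, gives an equivalence
\[ \Map_{\Mon_\mathcal{C}}(T, \EndO(U)) \simeq \LMod^U_T(\Fun(\mathcal{D},\mathcal{C})) \]
natural in $T$. Chaining this with the previous paragraph gives a natural equivalence $\Map_{(\Mon_\mathcal{C})^{op}}(\EndO(U), T) \simeq \Map_{\mathcal{R}}(U, \Phi(T))$, which is the desired corepresentability. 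To see that the resulting universal arrow is the unit of the adjunction, I would check that this equivalence is induced by composition with the canonical map $\eta_U \colon U \to \Phi(\EndO(U))$ classifying the canonical action of $\EndO(U)$ on $U$ (the image of $\mathrm{id}_{\EndO(U)}$ under \cref{prop:end_action}); concretely, restricting the canonical action along a monoid map $T \to \EndO(U)$ recovers the $T$-action that \cref{prop:end_action} assigns to that map, so $\eta_U$ induces the stated equivalence. The criterion \cite[Proposition 5.2.4.2]{lurieHTT} then assembles these universal arrows into the left adjoint $\Psi$, and uniqueness of adjoints identifies $\Psi(U)$ with $\EndO(U)$.

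The step I expect to be the main obstacle is the compatibility in the third paragraph: keeping the two universal properties aligned --- the identification of slice maps $U \to \mathcal{C}^T$ with $T$-actions on $U$ via \cref{lem:Action_on_functor}, and the universal property of $\EndO(U)$ via \cref{prop:end_action} --- and verifying that both are witnessed by the \emph{same} canonical action, so that the comparison is genuinely the adjunction unit rather than merely an abstract equivalence of spaces. A related subtlety is that \cref{prop:end_action} supplies naturality only in the monoid variable, so the functoriality of $U \mapsto \EndO(U)$ is not available a priori; this is exactly why I route the argument through the objectwise existence criterion, which produces the left adjoint --- and hence the functoriality --- for free.
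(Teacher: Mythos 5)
Your proposal is correct and follows essentially the same route as the paper's proof: reduce to objectwise corepresentability of $T \mapsto \Map_{(\icat)_{/\mathcal{C}}}(\mathcal{D},\mathcal{C}^T)$, identify this space with the fiber of $\Fun(\mathcal{D},\mathcal{C}^T) \to \Fun(\mathcal{D},\mathcal{C})$ over $U$ via the slice mapping-space description, transport it through \cref{lem:Action_on_functor} to the space of $T$-actions on $U$, and conclude by \cref{prop:end_action}. Your explicit remarks that $\EndO(U)$ exists because $U$ is a right adjoint, and that the universal arrow is the canonical action, are points the paper leaves implicit (the latter is its subsequent lemma), but the argument is the same.
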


\begin{proof}

To show the existence of the adjoint, it suffices to show that the functor $T \mapsto \Map_{(\icat)_/\mathcal{C}}(\mathcal{D},\mathcal{C}^T) $ is representable by $\EndO(U)$.
By applying \ref{prop:end_action} to the action of $\End(\mathcal{C})$ on $(\icat)_{/\mathcal{C}}$ given by \ref{cstr:EndC_actson_FunDC}, and applying \ref{lem:Action_on_functor}, we get equivalences (natural in $T$)

\[ \Map_{\Mon_\mathcal{C}}(T, \EndO(U)) \simeq  \Map^U_{\icat}(\mathcal{D},\mathcal{C})^{T} \simeq \Map^U_{\icat}(\mathcal{D},\mathcal{C}^T)  \]

where $\Map^U_{\icat}(\mathcal{D},\mathcal{C})^{T}$ and $\Map^U_{\icat}(\mathcal{D},\mathcal{C}^T)$ are the (homotopy) fibers of $\Map_{\icat}(\mathcal{D},\mathcal{C})^{T}$and  $\Map_{\icat}(\mathcal{D},\mathcal{C}^T)$ over $U \in \Map_{\icat}(\mathcal{D},\mathcal{C})$. By the description of mapping spaces in a slice $\infty$-category from \cite[Proposition 5.5.5.12]{lurieHTT}, one has an equivalence $\Map^U_{\icat}(\mathcal{D},\mathcal{C}^T) \simeq \Map_{(\icat)_{/\mathcal{C}}}(\mathcal{D},\mathcal{C}^T)$, which in total gives an equivalence natural in $T$:

\[ \Map_{\Mon_\mathcal{C}}(T, \EndO(U)) \simeq \Map_{(\icat)_{/\mathcal{C}}}(\mathcal{D},\mathcal{C}^T) \]

\end{proof}

\begin{lemma}
Let $U : \mathcal{D} \rightarrow \mathcal{C}$ be a functor of $\infty$-categories. The unit of the adjunction of \cref{prop:Monad_forget_adjunction} can be identified with the canonical map $\mathcal{D} \rightarrow \mathcal{C}^{\EndO(U)}$ determined by the action of $\EndO(U)$ on $U$, through the equivalence $\Fun(\mathcal{D},\mathcal{C}^{\EndO(U)}) \simeq \Fun(\mathcal{D},\mathcal{C})^{\EndO(U)}$ of \cref{lem:Action_on_functor}.
\end{lemma}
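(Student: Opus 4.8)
The plan is to exploit the fact that the left adjoint in \cref{prop:Monad_forget_adjunction} was produced by representability: for a right adjoint $U\colon \mathcal{D}\to\mathcal{C}$ the monad $\EndO(U)$ represents $T\mapsto \Map_{(\icat)_{/\mathcal{C}}}(\mathcal{D},\mathcal{C}^T)$ through the chain of equivalences, natural in $T$,
\[
\Map_{\Mon_\mathcal{C}}(T,\EndO(U))\;\simeq\;\Map^U_{\icat}(\mathcal{D},\mathcal{C})^{T}\;\simeq\;\Map^U_{\icat}(\mathcal{D},\mathcal{C}^T)\;\simeq\;\Map_{(\icat)_{/\mathcal{C}}}(\mathcal{D},\mathcal{C}^T)
\]
assembled in that proof. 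When an adjoint is obtained in this way, its unit at $U$ is, by the characterization of units via \cite[Proposition 5.2.2.7]{lurieHTT}, the image of $\mathrm{id}_{\EndO(U)}$ under this chain specialized to $T=\EndO(U)$ (recall that in $(\Mon_\mathcal{C})^{op}$ one has $LU=\EndO(U)$ and that the identity is the same in a category and its opposite). So I would set $T=\EndO(U)$ and trace the class of $\mathrm{id}_{\EndO(U)}$ through each equivalence, verifying that the output is the canonical map.

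The first equivalence is the instance of \cref{prop:end_action} used in \cref{prop:Monad_forget_adjunction}, for the action of $\End(\mathcal{C})$ on $\Fun(\mathcal{D},\mathcal{C})$ from \cref{cstr:EndC_actson_FunDC} and the object $U$. By the remark immediately following \cref{prop:end_action}, the identity of $\EndO(U)$ is sent precisely to the canonical action of $\EndO(U)$ on $U$, i.e. to the distinguished module structure in the fibre over $U$ of $\LMod^{\EndO(U)}(\Fun(\mathcal{D},\mathcal{C}))$. The second equivalence is that of \cref{lem:Action_on_functor} with $K=\mathcal{D}$, restricted to the fibre over $U$; it carries this canonical action to a functor $\mathcal{D}\to\mathcal{C}^{\EndO(U)}$ lying over $U$, which is exactly what the statement calls the canonical map, since the canonical map is by definition the image of the canonical action under \cref{lem:Action_on_functor}. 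The third equivalence, coming from \cite[Proposition 5.5.5.12]{lurieHTT}, merely reinterprets a functor over $\mathcal{C}$ as a morphism of $(\icat)_{/\mathcal{C}}$ and changes nothing. Composing the three identifications then shows that $\eta_U$ is the canonical map.

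The main obstacle is bookkeeping but genuine: one must confirm that the composite of the three displayed equivalences really is the natural equivalence representing $\EndO(U)$, so that evaluating it at $\mathrm{id}_{\EndO(U)}$ yields the honest unit rather than a version twisted by an automorphism of $\EndO(U)$ (the sort of ambiguity flagged after \cref{lem1.8}). This reduces to the naturality in $T$ of each step, which is already asserted in \cref{prop:end_action} and \cref{lem:Action_on_functor}, together with the standard fact that the unit of an adjunction built by representability is the value of the Hom-equivalence on the identity of the representing object; the latter can, if desired, be verified on the homotopy category. The one substantive input is the first step, the identification of the image of the identity with the canonical action, and this is exactly the content of the remark following \cref{prop:end_action}.
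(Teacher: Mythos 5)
Your proof is correct and follows essentially the same route as the paper's: both chase $\mathrm{id}_{\EndO(U)}$ through the chain of equivalences from \cref{prop:Monad_forget_adjunction}, using that the first step sends it to the canonical action of $\EndO(U)$ on $U$ by the very definition of that action, and that the remaining steps are just the identification of \cref{lem:Action_on_functor} followed by the reinterpretation via \cite[Proposition 5.5.5.12]{lurieHTT}. Your additional remarks on why the unit of a representability-constructed adjoint is the image of the identity, and on ruling out an automorphism twist via naturality in $T$, make explicit what the paper leaves implicit.
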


\begin{proof}

We need to chase through the series of equivalences in the proof of \cref{prop:Monad_forget_adjunction} the image of $id:\EndO(U) \to \EndO(U)$ in  $\Map_{(\icat)_{/\mathcal{C}}}(\mathcal{D},\mathcal{C}^{\EndO(U)})$.

The first step of this series of equivalences

\[ \Map_{\Mon_\mathcal{C}}(T, \EndO(U)) \simeq  \Map^U_{\icat}(\mathcal{D},\mathcal{C})^{T} \]

sends the identity of $\EndO(U)$ to the canonical action of $\EndO(U)$ on $U$ (see \cref{rk:def_endo_monad}), essentially by definition of this action. The map to $\Map_{(\icat)_{/\mathcal{C}}}(\mathcal{D},\mathcal{C}^T)$ is then essentially just the isomorphism $\Fun(\mathcal{D},\mathcal{C}^{\EndO(U)}) \simeq \Fun(\mathcal{D},\mathcal{C})^{\EndO(U)}$, hence the result. 

\end{proof}

A right adjoint functor $U:\mathcal{E} \to \mathcal{C}$ is said to be \emph{monadic} if the unit of adjunction $\mathcal{E} \to \mathcal{C}^{\EndO(U)}$ is an equivalence.

Theorem~4.7.3.5 of \cite{lurieHA} is an $\infty$-categorical version of the Barr-Beck theorem. It states that a right adjoint functor $U: \mathcal{E} \to \mathcal{C}$ is monadic if and only it is conservative and for every simplicial object in $\mathcal{E}$ whose image by $U$ is split has a colimit which is preserved by $U$.

Given that forgetful functors of the form $\mathcal{C}^T \to \mathcal{C}$ themselves satisfy all these conditions, this shows that the adjunction of \cref{prop:Monad_forget_adjunction} is an idempotent, and identifies the category $\Mon_\mathcal{C}$ of monads on a category $\mathcal{C}$ with the opposite of the category of monadic right adjoint functor $\mathcal{E} \to \mathcal{C}$, seen as a full subcategory of $(\icat)_{/\mathcal{C}}$. In particular, one deduces:

\begin{theorem}\label{th:monads=monadic} For any $\infty$-category $\mathcal{C}$, the functor 

\[ \begin{array}{ccc}
(\Mon_\mathcal{C})^{op} & \to & (\icat)_{/ \mathcal{C}} \\
T & \mapsto & \mathcal{C}^T
\end{array}
\]

is fully faithful and identifies $(\Mon_\mathcal{C})^{op}$ with $\Md_\mathcal{C}$ the reflective full subcategory of $(\icat)_{/ \mathcal{C}}$ of monadic right adjoint functors.

\end{theorem}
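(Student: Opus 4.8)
The plan is to assemble \cref{th:monads=monadic} from the three ingredients already established just before it: the adjunction of \cref{prop:Monad_forget_adjunction}, the identification of its unit, and the $\infty$-categorical Barr--Beck theorem. Let me write $\Phi : (\Mon_\mathcal{C})^{op} \to (\icat)_{/\mathcal{C}}$ for the functor $T \mapsto \mathcal{C}^T$, and recall from \cref{prop:Monad_forget_adjunction} that, after corestricting the target to the full subcategory $\Ladj$ of right adjoint functors, $\Phi$ admits a left adjoint $\Psi$ sending $U : \mathcal{D} \to \mathcal{C}$ to its endomorphism monad $\EndO(U)$. The strategy is to show this adjunction $\Psi \dashv \Phi$ is idempotent, and then to identify the essential image of the right adjoint $\Phi$ precisely as $\Md_\mathcal{C}$.

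First I would show the adjunction is idempotent by checking that the counit is an equivalence. The counit at a monad $T$ is a map in $(\Mon_\mathcal{C})^{op}$, i.e. a map $\EndO(\mathcal{C}^T \to \mathcal{C}) \to T$ in $\Mon_\mathcal{C}$; the claim is that this is an equivalence, which says exactly that the forgetful functor $\mathcal{C}^T \to \mathcal{C}$ is monadic with monad $T$. This is where Barr--Beck (Theorem 4.7.3.5 of \cite{lurieHA}) enters: the forgetful functor $U^T : \mathcal{C}^T \to \mathcal{C}$ is a right adjoint, it is conservative, and it creates colimits of $U^T$-split simplicial objects, so it is monadic, and the monad it generates is $T$ itself. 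Equivalently, by the lemma identifying the unit of \cref{prop:Monad_forget_adjunction} with the canonical comparison map, the unit of the adjunction $\Psi \dashv \Phi$ at a right adjoint $U$ is exactly the comparison functor $\mathcal{D} \to \mathcal{C}^{\EndO(U)}$, and \textbf{monadicity is by definition the statement that this unit is an equivalence}; Barr--Beck gives that $U^T$ is monadic, forcing the counit to be an equivalence.

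Once idempotence is in hand, the formal consequences recorded in \cref{idempobasics} finish the argument. An idempotent adjunction restricts to an equivalence between the essential images of its two adjoints; in particular $\Phi$ becomes fully faithful when restricted to those $T$ for which the counit is an equivalence, but since we just showed the counit is an equivalence for \emph{every} $T$, the functor $\Phi$ is fully faithful on all of $(\Mon_\mathcal{C})^{op}$. The essential image of $\Phi$ is by construction contained in the right adjoint functors, and an object $U \in \Ladj$ lies in the image of $\Phi$ precisely when the unit $U \to \Phi\Psi U = (\mathcal{C}^{\EndO(U)} \to \mathcal{C})$ is an equivalence (\cref{idempobasics}), which is exactly the condition that $U$ be monadic. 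Thus the essential image of $\Phi$ is the full subcategory $\Md_\mathcal{C}$ of monadic right adjoints, and since $\Phi$ is a fully faithful right adjoint its image is reflective.

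The main obstacle I anticipate is purely bookkeeping rather than conceptual: one must be careful that the ``unit'' whose invertibility defines monadicity (the comparison map $\mathcal{D} \to \mathcal{C}^{\EndO(U)}$) is genuinely the unit of the adjunction $\Psi \dashv \Phi$ and not merely an abstractly isomorphic map, so that idempotence is being read off the correct natural transformation. This compatibility is precisely what the intervening lemma on the unit supplies, so the delicate point is already discharged; the remaining steps are formal manipulations of the idempotent-adjunction formalism together with a single invocation of Barr--Beck.
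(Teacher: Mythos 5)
Your scaffolding is the same as the paper's --- the adjunction $\Psi \dashv \Phi$ of \cref{prop:Monad_forget_adjunction}, Lurie's Barr--Beck theorem, and the idempotent-adjunction formalism of \cref{idempobasics} --- but there is a genuine gap at the one step that carries all the content, namely the inference ``Barr--Beck gives that $U^T$ is monadic, forcing the counit to be an equivalence.'' Monadicity of $U^T : \mathcal{C}^T \to \mathcal{C}$ means, by the paper's definition, that the \emph{unit} $\eta_{\Phi(T)} : \mathcal{C}^T \to \mathcal{C}^{\EndO(U^T)}$ is an equivalence. From this, the triangle identity $\Phi(\epsilon_T) \circ \eta_{\Phi(T)} \simeq \mathrm{id}_{\Phi(T)}$ yields only that $\Phi(\epsilon_T)$ is an equivalence; to conclude that $\epsilon_T : T \to \EndO(U^T)$ itself is an equivalence you would need $\Phi$ to be conservative, i.e. that a morphism of monads inducing an equivalence of algebra categories over $\mathcal{C}$ is itself an equivalence. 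That statement is not available at this stage: it is essentially \cref{cor:equiv_of_monaidc}, which the paper deduces \emph{from} \cref{th:monads=monadic}. Nor can any purely formal manipulation close the gap: an adjunction whose unit is invertible on the essential image of the right adjoint (indeed, invertible everywhere) can still have a non-invertible counit --- the coreflection onto any proper coreflective subcategory is an example --- so idempotency is strictly weaker than full faithfulness of the right adjoint. For the same reason, the phrase ``the monad it generates is $T$ itself'' is an assertion, not a consequence of Barr--Beck, which as stated produces no comparison between $T$ and $\EndO(U^T)$ at all. What your argument actually proves is that the adjunction is idempotent and that $\Md_\mathcal{C}$ is equivalent to the essential image of $\Psi$ (the monads arising as endomorphism monads); full faithfulness of $\Phi$ on \emph{all} of $(\Mon_\mathcal{C})^{op}$, i.e. invertibility of $\epsilon_T$ for every $T$, is exactly what remains unproved.

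The missing ingredient is the non-formal fact that the tautological action of $T$ on its own forgetful functor exhibits $T$ as an endomorphism monad of $U^T$. Concretely, unwinding \cref{prop:Monad_forget_adjunction} and \cref{rk:def_endo_monad}, the counit $\epsilon_T$ is the morphism of monads classified by the action $T U^T \to U^T$, and after identifying $\EndO(U^T)$ with $U^T F^T$ via the unlabeled proposition following \cref{rk:def_endo_monad} (Lurie's Lemma 4.7.3.1), its underlying natural transformation becomes, up to the canonical identification $U^T F^T \simeq T$, the composite $T \xrightarrow{Tu} T \circ T \xrightarrow{m} T$, where $u$ and $m$ are the unit and multiplication of $T$; this is homotopic to the identity by the unit law of the monoid $T$. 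Since equivalences of monads are detected on underlying endofunctors (the forgetful functor $\Monoid(\End(\mathcal{C})) \to \End(\mathcal{C})$ is conservative, by the Segal conditions), $\epsilon_T$ is an equivalence. This computation, or a citation of the corresponding statement in \cite{lurieHA}, is what must replace your appeal to Barr--Beck; with it in place, your remaining steps (idempotency, identification of the essential image of $\Phi$ with $\Md_\mathcal{C}$, reflectivity) are correct and agree with the paper. Note finally that your closing paragraph flags the wrong delicate point: the unit identification is indeed discharged by the intervening lemma, but it is the counit, not the unit, where the real work hides.
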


This result was alluded to in Remark 4.7.3.8 of \cite{lurieHA}, but wasn't proven.

We finish with a consequence of Lurie's Barr-Beck theorem that will be useful in a few places:

\begin{proposition} \label{prop:Pullback_monadic} Given a (homotopy) pullback square of $\infty$-categories:

\[
\begin{tikzcd}
  \mathcal{D}' \ar[r,"G"] \ar[d,"V"] \ar[dr,phantom,"\lrcorner"very near start] & \mathcal{D} \ar[d,"U"] \\
\mathcal{C}' \ar[r,"F"] & \mathcal{C} \\
\end{tikzcd}
\]

if $U$ is a monadic right adjoint functor and $V$ is a right adjoint functor then $V$ is monadic.

\end{proposition}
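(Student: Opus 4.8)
The strategy is to verify the hypotheses of Lurie's Barr–Beck theorem (Theorem 4.7.3.5 of \cite{lurieHA}) for $V$ directly, using the pullback square together with the fact that $U$ satisfies them. Recall that a right adjoint functor is monadic if and only if it is conservative and preserves colimits of $U$-split simplicial objects. So the plan is to reduce each of these two properties for $V$ to the corresponding property for $U$, exploiting the two basic facts about homotopy pullbacks of $\infty$-categories: a map of $\infty$-categories is detected on the fibers, and limits/colimits in a pullback are computed componentwise in the two factors, provided the relevant diagrams match up in $\mathcal{C}$.

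First I would establish conservativity. Suppose $g$ is a morphism in $\mathcal{D}'$ such that $V(g)$ is an equivalence. Since the square is a (homotopy) pullback, a morphism in $\mathcal{D}'$ is an equivalence if and only if its images under $V$ and under $G$ are both equivalences in $\mathcal{C}'$ and $\mathcal{D}$ respectively. By commutativity, $U(G(g)) = F(V(g))$ is an equivalence (as $F$ preserves equivalences and $V(g)$ is one); since $U$ is conservative, $G(g)$ is an equivalence. Combined with $V(g)$ being an equivalence, this forces $g$ to be an equivalence, so $V$ is conservative.

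Next I would handle preservation of $V$-split colimits. Let $X_\bullet$ be a simplicial object of $\mathcal{D}'$ such that $V(X_\bullet)$ is split. Applying $F$, the simplicial object $F(V(X_\bullet)) = U(G(X_\bullet))$ is split as well (a split augmented simplicial object is preserved by any functor). Thus $G(X_\bullet)$ is a $U$-split simplicial object in $\mathcal{D}$, so by monadicity of $U$ it has a colimit preserved by $U$; likewise $V(X_\bullet)$, being split, has a colimit preserved by $F$. Using that colimits in the pullback $\mathcal{D}' \simeq \mathcal{C}' \times_{\mathcal{C}} \mathcal{D}$ can be computed componentwise once the images in $\mathcal{C}$ agree — which they do here, since both $U(\colim G(X_\bullet))$ and $F(\colim V(X_\bullet))$ compute the same colimit $\colim F(V(X_\bullet))$ in $\mathcal{C}$ — one obtains a colimit of $X_\bullet$ in $\mathcal{D}'$ whose image under $V$ is $\colim V(X_\bullet)$. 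Hence $V$ preserves this colimit.

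The main obstacle, and the step requiring the most care, is the last one: making precise the claim that colimits in a homotopy pullback of $\infty$-categories are computed componentwise. This is not automatic for arbitrary pullbacks, but holds here because the colimit in question exists and is preserved in each of the three corners $\mathcal{C}'$, $\mathcal{D}$, $\mathcal{C}$ for the relevant split/$U$-split diagrams, so one is in the situation where the universal property of the pullback can be combined with the existence of the three colimits to produce and characterize the colimit upstairs. I would invoke the standard result on limits and colimits in lax/strict pullbacks of $\infty$-categories (e.g.\ the results of \cite[Section 5.4.5]{lurieHTT} on relative colimits, or argue fiberwise using that $U$ and $F$ detect the colimit), and verify that the compatibility over $\mathcal{C}$ is exactly guaranteed by the commutativity $F V = U G$ together with the two preservation statements.
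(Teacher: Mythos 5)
Your proposal is correct and follows essentially the same route as the paper: both verify Lurie's Barr--Beck criteria for $V$, proving conservativity by detecting equivalences in the homotopy pullback and handling $V$-split simplicial objects by pushing them to $\mathcal{D}$ and $\mathcal{C}'$ (using that any functor preserves split colimits) and then invoking the fact that a cone in a homotopy pullback whose projections to all three corners are colimit cones is itself a colimit cone. The one step you defer to the literature --- that componentwise statement about colimits in pullbacks --- is precisely what the paper proves as its auxiliary result, \cref{colimitinternal}, via a reduction to simplicial categories and the Bergner model structure rather than the relative-colimit machinery you suggest citing.
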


\begin{proof} We show that if $U$ satisfies the conditions of Lurie's Barr-Beck monadicity theorem (i.e. Theorem 4.7.3.5 of \cite{lurieHTT}), then so does $V$.

An arrow $f \in \mathcal{D}'$ is invertible if and only if both its image and $\mathcal{C}'$ and $\mathcal{D}$ are invertible. But if its image in $\mathcal{C}'$ is invertible, then its image in $\mathcal{C}$ is as well. Hence, as $U$ is conservative, its image in $\mathcal{D}$ is also invertible. Thus, $V$ is conservative. 
  
Let $X:\Delta \rightarrow \mathcal{D}'$ be a $V$-split simplicial diagram. Its image in $\mathcal{D}$ is a $U$-split simplicial diagram, hence it admit a colimit which is preserved by $U$. The colimit of $X$ in $\mathcal{C}'$ is split, and is thus preserved by $F$, since split colimits are preserved by all functors (\cite[Lemma  6.1.3.16]{lurieHTT}). It follows that $X$ has a colimit both in $\mathcal{D}$ and $\mathcal{C}'$ which is preserved by $U$ and $F$. Hence, it has a colimit in $\mathcal{D}'$ which is preserved by both projections by the lemma below.
\end{proof}

\begin{lemma}\label{colimitinternal}
Suppose that we have a diagram
\[
\xymatrix
{
N(I)^{\triangleleft} \ar[r]_{\phi} & \mathcal{D} \ar[r] \ar[d] & \ar[d]_{g} \mathcal{X} \\
& \ar[r]_{f} \mathcal{Y} & \mathcal{Z}
}
\]
where the square is a homotopy pullback square of $\infty$-categories and $I$ is any category. Suppose that $\phi$ determines a colimit diagram in $\mathcal{X}, \mathcal{Y}, \mathcal{Z}$. Then $\phi$ is a colimit diagram in $\mathcal{D}$. 
 
\end{lemma}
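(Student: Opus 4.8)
The plan is to reduce everything to a statement about mapping spaces and to use the characterization of colimit diagrams in terms of corepresentable functors. Recall that a cone $\bar\phi : N(I)^{\triangleleft} \to \mathcal{D}$, with cone point $\infty$ mapping to $d_\infty := \phi(\infty)$, exhibits $d_\infty$ as the colimit of $\phi|_{N(I)}$ if and only if for every test object $w \in \mathcal{D}$ the map induced by the cone,
\[ \Map_{\mathcal{D}}(d_\infty, w) \to \lim_{N(I)^{op}} \Map_{\mathcal{D}}(\phi(i), w), \]
is an equivalence of spaces (this is the mapping-space reformulation of the definition of colimit, using that $\Map_{\mathcal{D}}(\uvar, w)$ carries colimits to limits; see the discussion in \cite{lurieHTT}). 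So it suffices to verify this equivalence for each fixed $w$.

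The key input is the formula for mapping spaces in a homotopy pullback. First I would record that, since the square is a homotopy pullback of $\infty$-categories, for any two objects $d, w \in \mathcal{D}$ with images $x, x' \in \mathcal{X}$, $y, y' \in \mathcal{Y}$ and $z, z' \in \mathcal{Z}$, there is a natural equivalence
\[ \Map_{\mathcal{D}}(d, w) \simeq \Map_{\mathcal{X}}(x, x') \times_{\Map_{\mathcal{Z}}(z, z')} \Map_{\mathcal{Y}}(y, y'), \]
where the right-hand side is a homotopy pullback of spaces. This is a standard consequence of the fact that homotopy limits of $\infty$-categories compute mapping spaces as the corresponding homotopy limit; concretely one may model the cospan $\mathcal{X} \xrightarrow{g} \mathcal{Z} \xleftarrow{f} \mathcal{Y}$ by an isofibration and take the strict pullback, for which the mapping space is literally the pullback of mapping spaces. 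The important point is that this equivalence is natural in both variables.

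With this in hand the computation is formal. Fix $w$, write $x', y', z'$ for its images and $x_i, y_i, z_i$ for the images of $\phi(i)$ (with $x_\infty, y_\infty, z_\infty$ the images of $d_\infty$). Applying the hypotheses that $\phi$ is a colimit diagram in $\mathcal{X}$, in $\mathcal{Y}$ and in $\mathcal{Z}$ gives $\Map_{\mathcal{X}}(x_\infty, x') \simeq \lim_{N(I)^{op}} \Map_{\mathcal{X}}(x_i, x')$ and likewise for $\mathcal{Y}$ and $\mathcal{Z}$. Substituting these into the pullback formula for $\Map_{\mathcal{D}}(d_\infty, w)$ and using that limits commute with limits (so the pullback, a finite limit, commutes with the limit over $N(I)^{op}$) yields
\[ \Map_{\mathcal{D}}(d_\infty, w) \simeq \lim_{N(I)^{op}} \left( \Map_{\mathcal{X}}(x_i, x') \times_{\Map_{\mathcal{Z}}(z_i, z')} \Map_{\mathcal{Y}}(y_i, y') \right) \simeq \lim_{N(I)^{op}} \Map_{\mathcal{D}}(\phi(i), w), \]
where the last step is the pullback formula applied objectwise. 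This is precisely the comparison map we needed to show is an equivalence, so $\bar\phi$ is a colimit diagram in $\mathcal{D}$.

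The main obstacle I anticipate is not this chain of equivalences but the bookkeeping of naturality: one must make sure the composite above is literally the canonical map induced by the cone $\bar\phi$, rather than merely some abstract equivalence between the two spaces. This comes down to checking that the mapping-space pullback formula is natural in the diagram variable $i$ and compatible with the cone legs of $\phi$, so that the colimit-comparison maps in $\mathcal{X}$, $\mathcal{Y}$ and $\mathcal{Z}$ assemble into the colimit-comparison map in $\mathcal{D}$. Once the naturality of the pullback formula is pinned down — which is where the careful model-level or $\infty$-categorical argument is really needed — everything else is the formal interchange of limits.
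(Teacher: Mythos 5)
Your proof is correct and follows essentially the same route as the paper's: both reduce the statement to mapping spaces, using that colimits are detected by $\Map(\uvar,w)$ carrying them to limits, that mapping spaces in a homotopy pullback of $\infty$-categories are homotopy pullbacks of mapping spaces, and that limits commute with limits. The only difference is implementation — the paper strictifies via the Bergner model structure on simplicial categories (where the mapping-space pullback formula and its naturality come for free from the strict construction), whereas you stay in quasi-categories and model the cospan by an isofibration; the naturality bookkeeping you flag at the end is exactly the point that the paper's strictification takes care of.
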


\begin{proof}
Because of the Quillen equivalence between Bergner's model structure on simplicial categories and Joyal's structure, we can replace the above diagram with the nerve of a diagram of (fibrant) simplicial categories. By \cite[4.2.4.1]{lurieHTT}, we thus reduce to the corresponding statement about simplicial categories, where the homotopy pullback is taken with respect to Bergner's model structure. For each pair of objects $x, y \in \mathcal{Y}, x', y' \in \mathcal{X}$ such that $f(x) = g(x'), f(y) = g(y')$, we have a homotopy pullback:
\[
\xymatrix
{
\Map_{\mathcal{D}}((x, x'), (y, y')) \ar[d] \ar[r] & \Map_{\mathcal{X}}(x, y) \ar[d] \\
\Map_{\mathcal{Y}}(x, y) \ar[r] & \Map_{\mathcal{Z}}(f(x), f(y))
}
\]

This follows from the construction of homotopy pullbacks in Bergner's model structure. The result now follows from the description of homotopy colimits internal to a fibrant simplicial category (\cite[Remark A.3.3.13]{lurieHTT}) and the fact that homotopy pullbacks and homotopy colimits of simplicial sets commute (see \cite[6.1.3.14]{lurieHTT}.

\end{proof}

Finally, we will need the following lemma that is essentially a consequence of \cref{th:monads=monadic}:

\begin{lemma}\label{cor:equiv_of_monaidc} Let $U_1 : \mathcal{D}_1 \to \mathcal{C}$ and $U_2 : \mathcal{D}_2 \to \mathcal{C}$ be two monadic right adjoint functors, with left adjoints $L_1$ and $L_2$ and $t: \mathcal{D}_1 \to \mathcal{D}_2$ be a functor such that $U_1 \simeq U_2 t$. Then $t$ is an equivalence of $\infty$-categories if and only if the natural transformation $L_2 \to t L_1$ obtained from the isomorphism $U_1 \to U_2 t$ through the adjunction is an equivalence.\end{lemma}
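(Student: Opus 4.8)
The plan is to reduce the statement to \cref{th:monads=monadic} by translating everything into a single morphism of monads, so that both directions of the ``if and only if'' fall out of one chain of logical equivalences rather than being treated separately. First I would record that $t$ is a morphism in $(\icat)_{/\mathcal{C}}$: the equivalence $U_1 \simeq U_2 t$ says exactly that $t$ is a map over $\mathcal{C}$ between the two objects $(\mathcal{D}_1, U_1)$ and $(\mathcal{D}_2, U_2)$ of $\Md_\mathcal{C}$. Writing $T_i = \EndO(U_i) \simeq U_i L_i$ for the associated monads, monadicity of $U_1$ and $U_2$ gives equivalences $\mathcal{D}_i \simeq \mathcal{C}^{T_i}$ over $\mathcal{C}$, and by the full faithfulness in \cref{th:monads=monadic} the morphism $t$ corresponds to a morphism of monads $\phi : T_2 \to T_1$. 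Since a fully faithful functor reflects equivalences, and since a map in the slice $(\icat)_{/\mathcal{C}}$ is an equivalence precisely when its underlying functor is, this yields the first link of the chain: $t$ is an equivalence of $\infty$-categories if and only if $\phi$ is an equivalence in $\Mon_\mathcal{C} = \Monoid(\End(\mathcal{C}))$.

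Next I would reduce each of ``$\phi$ is an equivalence'' and ``$\beta$ is an equivalence'' to a componentwise statement about a natural transformation $T_2 \to T_1$. The forgetful functor $\Monoid(\End(\mathcal{C})) \to \End(\mathcal{C})$ is conservative (equivalences of monoids are detected on the underlying object via the Segal conditions), so $\phi$ is an equivalence iff its underlying transformation $\phi^{\flat} : T_2 \to T_1$ of endofunctors is one, which by \cref{lem1.7} holds iff it is invertible at each object of $\mathcal{C}$. On the other side, $\beta : L_2 \to t L_1$ is an equivalence iff each component $\beta_c$ is (again \cref{lem1.7}), and since $U_2$ is conservative as a monadic functor, this holds iff $U_2\beta_c$ is invertible for every $c$, i.e. iff $U_2\beta : U_2 L_2 \to U_2 t L_1 \simeq U_1 L_1$ is a componentwise equivalence. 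Both conditions are thus reduced to the objectwise invertibility of two natural transformations $T_2 \to T_1$, namely $\phi^{\flat}$ and $U_2\beta$ (the latter using the identification $U_2 t \simeq U_1$).

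The heart of the proof, and the step I expect to be the main obstacle, is therefore the identification $\phi^{\flat} \simeq U_2\beta$. This is the $\infty$-categorical incarnation of the classical fact that the monad map classifying a functor between Eilenberg--Moore categories is recovered by applying the forgetful functor to the comparison on free objects. I would establish it by unwinding the universal property of the endomorphism monad: by \cref{prop:end_action} and \cref{rk:def_endo_monad}, $\phi$ is the monad map classifying the $T_2$-action on $U_1 \simeq U_2 t$ obtained by whiskering the canonical $T_2$-action on $U_2$ with $t$, and \cref{prop:Monad_forget_adjunction} (together with \cref{lem:Action_on_functor}) identifies such an action with the comparison data. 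Writing the mate formula $\beta = (\epsilon_2\, t L_1) \circ (L_2\, \alpha\, L_1) \circ (L_2\, \eta_1)$ for $\alpha : U_1 \simeq U_2 t$, where $\eta_1$ is the unit of $L_1 \dashv U_1$ and $\epsilon_2$ the counit of $L_2 \dashv U_2$, and applying $U_2$, the triangle identities for $L_2 \dashv U_2$ collapse the composite to precisely the underlying transformation of that whiskered $T_2$-action precomposed with $\eta_1$, which is $\phi^{\flat}$. I expect this diagram chase, carried out at the level of the straightening/unstraightening constructions used in \cref{prop:end_action}, to be the only genuinely delicate point. Once it is in place the chain closes: $t$ equiv $\iff \phi$ equiv $\iff \phi^{\flat}$ objectwise equiv $\iff U_2\beta$ objectwise equiv $\iff \beta$ equiv.
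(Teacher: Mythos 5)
Your proposal is correct and follows essentially the same route as the paper's proof: both pass through \cref{th:monads=monadic} to convert $t$ into a morphism of monads, identify its underlying natural transformation $U_2L_2 \to U_1L_1$ with $U_2$ applied to the mate $L_2 \to tL_1$, and conclude by conservativity of $U_2$ (together with conservativity of the forgetful functor from monads to endofunctors, which the paper uses implicitly and you make explicit). The only slight imprecision is attributing the collapse in the mate computation to the triangle identities for $L_2 \dashv U_2$ — the identity actually needed to extract the underlying transformation of $\phi$ from the universal property is the one for $L_1 \dashv U_1$, while applying $U_2$ to the mate formula requires none — but this does not affect the validity of the argument.
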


\begin{proof} Under the equivalence \cref{th:monads=monadic}, $t$ corresponds to a morphisms of monads $\End(U_2) \to \End(U_1)$, and $t$ is an equivalence if and only if this morphism of monads is an equivalence. At the level of underlying endofunctors, the morphism of monads identifies with a natural transformation $U_2 L_2 \to U_1 L_1$ induced by the action of $U_2L_2$ on $U_1 \simeq U_2 \circ t$. Thus, it can be described as the natural transformation $U_2 L_2 \to U_1 L_1 \simeq U_2 t L_1$ obtained under the adjunction $L_1 \dashv U_1$ from the map $U_2 L_2 U_2 t \to U_2 t$ induced by the counit $L_2 U_2 \to Id$.

Unfolding this, we see that up canonical isomorphism, this map $U_2 L_2 \to U_1 L_1$ is exactly the image under $U_2$ of the natural transformation $L_2 \to t L_1$. As $U_2$ is conservative it indeed follows that the morphism of monads is an equivalence if and only if $L_2 \to t L_1$ is an equivalence.
  
\end{proof}

\begin{remark} In the rest of the paper, we will never use explicitly use the notion of monads, but always work with monads through the equivalence of \cref{th:monads=monadic}. The only exception to this is \cref{cor:equiv_of_monaidc} that will be used in the proof of \cref{thm5.1}.




In particular, any theory of monads for which \cref{th:monads=monadic} and \cref{cor:equiv_of_monaidc} are valid can be used instead of Lurie's theory of monads. We suspect this should apply for example to Riehl-Verity theory of monads on $\infty$-categories from \cite{riehl2016homotopy}.
\end{remark}

\section{Partial adjoints and functoriality of the Kleisli category}
\label{sec:Partial_adjoints}

\begin{definition} If $T$ is a monad on an $\infty$-category $\mathcal{C}$, we denote by $\mathcal{C}_T$ the full subcategory of the $\infty$-category $\mathcal{C}^T$ of $T$-algebras on free $T$-algebras. That is, those $T$-algebras in the essential image of the free $T$-algebra functor $\mathcal{C} \to \mathcal{C}^T$.
$\mathcal{C}_T$ is called the \textit{Kleisli category} of $\mathcal{C}$.
\end{definition}

As the title suggests, the goal of this section is to study the functoriality properties of the construction $T \mapsto \mathcal{C}_T$. While $T \mapsto \mathcal{C}^T$ has a contravariant functoriality, the Kleisli category has a covariant functoriality essentially given by taking the left adjoint $f_!$ to $f^*$ for $f: T \to M$ a morphism of monads. However (even in ordinary category theory) the existence of a left adjoint $f_! \dashv f^*$ is in general not guaranteed, and when it exists its construction generally requires a complicated transfinite construction or an application of the special adjoint functor theorem.  In particular, given that we have not proven at this point that the $\infty$-category of algebras $\mathcal{C}^T$ has colimits or is a presentable category it would not be reasonable to assume that such a left adjoint exists. Instead we need to consider $f_!$ as a ``partial left adjoint'' in the following sense:

\begin{definition} Let $R: \mathcal{C} \to \mathcal{D}$ be a functor between $\infty$-categories. Let $\mathcal{D'} \subset \mathcal{D}$ be a full subcategory. One says that $R$ has a \emph{partial left adjoint} on $\mathcal{D'}$ if for all $X \in \mathcal{D'}$, the functor:

 \[ \begin{array}{rcl}
\mathcal{C} & \to & \Sp \\
Y & \mapsto & \Map_{\mathcal{D}}(X,R(Y))
\end{array} \]

is representable. If $\mathcal{C'} \subset \mathcal{C}$ is a full subcategory of $\mathcal{C}$, one says that $R$ has a partial left adjoint from $\mathcal{D'} \to \mathcal{C'}$ if for all $X \in \mathcal{D'}$ the object $Y$ as above is in $\mathcal{C'}$. We define \emph{partial right adjoint} in the dual way.

\end{definition}

By the $\infty$-categorical Yoneda lemma, it follows that when $R$ has a partial left adjoint $\mathcal{D'} \to \mathcal{C'}$ then there is an essentially unique functor $F:\mathcal{D'} \to \mathcal{C}$, called the partial left adjoint of $R$, endowed with an adjunction isomorphism:

\[ \Map_{\mathcal{D}}(X,R(Y)) \simeq \Map_{\mathcal{C}}(F(X),Y)\]

natural in $X \in \mathcal{D'}$ and $Y \in \mathcal{C}$.

As mentioned above, our main example of partial left adjoints comes from morphisms of monads:

\begin{proposition}\label{prop:f!_exists} Let $f : T \to M$ be a morphism of monads on a category $\mathcal{C}$. Then the forgetful functor between their categories of algebras $f^*: \mathcal{C}^M \to \mathcal{C}^T$ has a partial left adjoint $f_! : \mathcal{C}_T \to \mathcal{C}_M$ between the full subcategories $\mathcal{C}_T \subset \mathcal{C}^T$ and $\mathcal{C}_M \subset \mathcal{C}^M$ of free algebras.
\end{proposition}

\begin{proof} Let $U:\mathcal{C}^T \to \mathcal{C}$ and $V:\mathcal{C}^M \to \mathcal{C}$ be the two forgetful functors.

For any free algebra $X=T(A) \in \mathcal{C}_T$ and $Y$ an $M$-algebra, we have a series of isomorphisms all natural in $Y \in \mathcal{C}^M$:
\[ \Map_{\mathcal{C}^T}(X,f^* Y ) \simeq \Map_{\mathcal{C}}(A,U(f^* Y) ) \simeq \Map_{\mathcal{C}}(A,V( Y) ) \simeq \Map_{\mathcal{C}^M}(M A, Y ) . \]

Thus, the functor $\Map_{\mathcal{C}^T}(X,f^* \uvar )$ is representable by $MA$, which concludes the proof. 
\end{proof}

In order to study the functoriality properties of the Kleisli category construction, we will consider more generally the question of how partial left adjoints assemble into a $\icat$-valued functor. This occurs in exactly the same way as left adjoints assemble into a $\icat$-valued functor (as show for example for adjointable functors between locally presentable $\infty$-categories in \cite[Corollary 5.5.3.4]{lurieHTT}). To remind ourselves of the main case of interest, i.e. the category of monads, we will use similar notation for the general case:

\begin{assumption}\label{assumption:partial_adjoints_func} Consider a functor $\mathcal{D}^{op} \to \icat$, denoted $d \mapsto X^d$. For $f: d \to d'$ an arrow in $\mathcal{D}$, we denote the induced functor by $f^* : X^{d'} \to X^{d}$.

We also assume that for each object $d \in \mathcal{D}$, we have a full subcategory $X_d \subset X^d$ such that for each edge $f:d \to d'$, $f^*:X^{d'} \to X^d$ has a partial left adjoint $f_! : X_d \to X_{d'}$.

It should be noted that this automatically implies that if $d$ and $d'$ are isomorphic in $\mathcal{D}$, then the subcategory $X_d$ and $X_d'$ are identified by the equivalence between $X^d$ and $X^{d'}$. 
\end{assumption}

\begin{proposition} \label{prop:Partial_adjoint_fct} Let $X^\bullet : \mathcal{D}^{op} \to \icat$ be a functor as in \cref{assumption:partial_adjoints_func} above. Then there is a functor $\mathcal{D} \to \icat$ that sends each object of $d$ to $X_d$ and each arrow $f$ to $f_!$.
\end{proposition}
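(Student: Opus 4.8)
The plan is to realize the desired functor as the straightening of a coCartesian fibration obtained by restricting the unstraightening of $X^\bullet$ to the chosen subcategories. First I would let $p \colon \mathcal{X} \to \mathcal{D}$ be the Cartesian fibration classified by $X^\bullet \colon \mathcal{D}^{op} \to \icat$ under the straightening/unstraightening equivalence of \cite{lurieHTT}; its fiber over $d$ is $X^d$ and, for $f \colon d \to d'$, Cartesian transport recovers $f^* \colon X^{d'} \to X^d$. Let $\mathcal{X}_0 \subset \mathcal{X}$ be the full subcategory spanned by the objects lying in some $X_d$. The remark at the end of \cref{assumption:partial_adjoints_func} guarantees that this subcategory is closed under equivalences (an equivalence in $\mathcal{X}$ projects to an equivalence in $\mathcal{D}$ and respects the $X_d$), so $\mathcal{X}_0$ is well defined, and the restriction $q = p|_{\mathcal{X}_0} \colon \mathcal{X}_0 \to \mathcal{D}$ is again an inner fibration (any inner horn lifted in $\mathcal{X}$ automatically lands in the full subcategory $\mathcal{X}_0$). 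The fiber of $q$ over $d$ is exactly $X_d$, so it suffices to prove that $q$ is a \emph{coCartesian} fibration: straightening $q$ then yields a functor $\mathcal{D} \to \icat$ with $q^{-1}(d) = X_d$ whose value on $f$ is coCartesian transport, and it remains only to identify the latter with $f_!$.

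To produce coCartesian lifts, fix $x \in X_d$ and an arrow $f \colon d \to d'$. Since $f^*$ admits a partial left adjoint on $X_d$, we have $f_! x \in X_{d'}$ together with the adjunction equivalence $\Map_{X^{d'}}(f_! x, y) \simeq \Map_{X^d}(x, f^* y)$, natural in $y \in X^{d'}$. Taking $y = f_! x$ and transporting $\mathrm{id}_{f_! x}$ produces the unit $u \colon x \to f^*(f_! x)$ in $X^d$. Composing $u$ with the $p$-Cartesian edge $f^*(f_! x) \to f_! x$ lying over $f$ gives an edge $e \colon x \to f_! x$ in $\mathcal{X}_0$ over $f$. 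I claim $e$ is $q$-coCartesian, which, as $x$ and $f$ were arbitrary, establishes that $q$ is coCartesian.

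The claim is checked through the mapping-space characterization of (co)Cartesian edges (\cite[Section 2.4.4]{lurieHTT}): for each $z \in \mathcal{X}_0$ one must show that the square relating $\Map_{\mathcal{X}}(f_! x, z) \to \Map_{\mathcal{X}}(x, z)$ to $\Map_{\mathcal{D}}(d', q z) \to \Map_{\mathcal{D}}(d, q z)$ is a homotopy pullback. Because $p$ is Cartesian, $\Map_{\mathcal{X}}(w, z)$ fibers over $\Map_{\mathcal{D}}(p w, p z)$ with fiber over $g$ given by $\Map_{X^{p w}}(w, g^* z)$; taking fibers of the square over a fixed $g \colon d' \to q z$ therefore reduces the pullback condition to asking that precomposition with $e$ induce an equivalence $\Map_{X^{d'}}(f_! x, g^* z) \xrightarrow{\sim} \Map_{X^d}(x, f^* g^* z)$, where we use the coherence $(g f)^* \simeq f^* g^*$ coming from $p$ being Cartesian. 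Unwinding the construction of $e$, precomposition with $e$ sends $\psi \mapsto f^*(\psi) \circ u$, which is exactly the partial adjunction equivalence evaluated at $y = g^* z$; hence it is an equivalence and the square is a homotopy pullback. Thus $e$ is $q$-coCartesian, $q$ is a coCartesian fibration whose coCartesian transport along $f$ is $f_!$ by construction, and straightening $q$ gives the asserted functor $\mathcal{D} \to \icat$.

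The main obstacle is precisely this last identification: verifying that the abstractly constructed coCartesian transport agrees with the prescribed partial adjoint $f_!$, which hinges on recognizing the partial adjunction equivalence as precomposition with the unit $u$ (equivalently with $e$). Everything else — existence of lifts, the inner-fibration bookkeeping, and the passage through straightening — is formal once $X^\bullet$ has been unstraightened.
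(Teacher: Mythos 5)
Your proof is correct, and its skeleton is the same as the paper's: unstraighten $X^\bullet$ to a Cartesian fibration $\pi \colon \mathcal{X} \to \mathcal{D}$, pass to the full subcategory spanned by the objects lying in the $X_d$, take as candidate coCartesian lifts the edges $x \to f_! x$ corresponding to the partial-adjunction units $x \to f^* f_! x$, and straighten the resulting coCartesian fibration. The two arguments part ways only in how coCartesianness of these edges is verified, and there the routes are genuinely different. The paper notes that the adjunction isomorphism makes the edge \emph{locally} $\pi$-coCartesian (exactly as in the proof of $(2) \Rightarrow (1)$ of \cite[Proposition 5.2.2.8]{lurieHTT}) and then upgrades this to coCartesian by citing the fact that in a Cartesian fibration every locally coCartesian edge is coCartesian (\cite[Corollary 5.2.2.4]{lurieHTT}); Cartesianness of $\pi$ enters only through that citation. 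You instead verify the mapping-space homotopy-pullback criterion of \cite[Section 2.4.4]{lurieHTT} directly, using Cartesianness of $\pi$ to fiber $\Map_{\mathcal{X}}(w,z)$ over $\Map_{\mathcal{D}}(\pi w, \pi z)$ with fibers $\Map_{X^{\pi w}}(w, g^* z)$, and then recognizing the induced map on fibers as the partial-adjunction equivalence $\psi \mapsto f^*(\psi) \circ u$. What each buys: your computation is more self-contained and makes completely explicit where the partial adjunction and the Cartesianness of $\pi$ are used (and it only ever invokes the adjunction equivalence at objects $g^* z$, which is all the ``partial'' hypothesis provides), whereas the paper's argument is shorter because both the local coCartesianness and the upgrade are delegated to Lurie's general results. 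Both identifications of the transport with $f_!$ are handled at the same (informal but standard) level of detail, so neither proof has an advantage there.
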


A precise construction of the functor is given in the proof and will be important on a few occasions in the rest of the paper.

\begin{proof} Let $\pi: \mathcal{X} \to \mathcal{D}$ be the cartesian fibration classified by $X$. Up to equivalence of $\infty$-categories one can freely assume that objects of $\mathcal{X}$ are pairs $(d,x)$ where $d$ is an object of $\mathcal{D}$ and $x$ is an object of $\mathcal{X}^d$.

We write $\mathcal{X'}$  for the full subcategory of $\mathcal{X}$ of objects of the form $(d,x)$ for $x \in \mathcal{X}_d$, and we claim that $\mathcal{X'} \to \mathcal{D}$ is a cocartesian fibration classifying a functor as described in the proposition.

Indeed, for each arrow $f:d' \to d$ and $x \in X_{d'}$, we have a unit arrow $x \to f^*f_!x$ in $X^{d'}$ constructed from the adjunction isomorphism in the usual way. It corresponds to an arrow $(d',x) \to (d,f_! x)$ in $\mathcal{X}$. Exactly as in the case of actual adjunction (see the proof of ``$(2) \Rightarrow (1)$'' of Proposition 5.2.2.8 of \cite{lurieHTT}), the adjunction isomorphism shows that this arrow is a locally $\pi$-cocartesian arrow in $\mathcal{X}$.

And Corollary 5.2.2.4 of \cite{lurieHTT} shows that, as $\pi$ is a Cartesian fibration, any locally $\pi$-coCartesian arrow is actually coCartesian, so this construction provide us with coCartesian lifts of any arrow $d' \to d$ for any object in $\mathcal{X'}$ over $d'$.

By the definition of $\mathcal{X'}$ its fiber over an object $d\in \mathcal{D}$ is indeed equivalent to $X_d$, and the way we constructed the cocartesian lift shows the functoriality is exactly the $f_!$ functor.

\end{proof}

It immediately follows from \cref{prop:f!_exists} and \cref{prop:Partial_adjoint_fct} that:

\begin{corollary}\label{cor:functoriality_of_Kleisli} The Kleisli category construction $T \mapsto \mathcal{C}_T$ defines a functor $\Mon_\mathcal{C} \to \icat$. Each morphism of monads$f:T \to M$ is sent to the partial left adjoint $f_! : \mathcal{C}_T \to \mathcal{C}_M$ to $f^*$.
\end{corollary}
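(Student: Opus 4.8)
The plan is to recognize this corollary as a direct instance of the abstract machinery already assembled in \cref{prop:Partial_adjoint_fct}, applied to the specific functor $T \mapsto \mathcal{C}^T$. Concretely, I would set $\mathcal{D} = \Mon_\mathcal{C}$ and take $X^\bullet : (\Mon_\mathcal{C})^{op} \to \icat$ to be the functor $T \mapsto \mathcal{C}^T$ constructed earlier by straightening the Cartesian fibration $\LMod(\End(\mathcal{C})) \to \Monoid(\End(\mathcal{C}))$; for a morphism of monads $f : T \to M$ the induced functor $f^* : \mathcal{C}^M \to \mathcal{C}^T$ is the forgetful functor on algebras. This is exactly the data $d \mapsto X^d$, $f \mapsto f^*$ required by \cref{assumption:partial_adjoints_func}.

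Next I would supply the full subcategories demanded by \cref{assumption:partial_adjoints_func}: for each $T$ take $X_T = \mathcal{C}_T \subset \mathcal{C}^T$ to be the full subcategory of free $T$-algebras. The only hypothesis that genuinely requires input is the existence of the partial left adjoints, and this is precisely the content of \cref{prop:f!_exists}: for every $f : T \to M$ the forgetful functor $f^*$ admits a partial left adjoint $f_! : \mathcal{C}_T \to \mathcal{C}_M$. Crucially, the proof of \cref{prop:f!_exists} already checks that this partial adjoint carries free $T$-algebras to free $M$-algebras — the functor $\Map_{\mathcal{C}^T}(T(A), f^*(\uvar))$ is represented by $M(A)$ — so that $f_!$ really does land in the prescribed subcategory $\mathcal{C}_M$. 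Hence all the hypotheses of \cref{assumption:partial_adjoints_func} are met.

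Applying \cref{prop:Partial_adjoint_fct} then produces a functor $\Mon_\mathcal{C} \to \icat$ sending $T \mapsto \mathcal{C}_T$ and $f \mapsto f_!$, which is exactly the assertion. Since all the substantive work has been absorbed into the two cited propositions, I do not anticipate any real obstacle; the functor is obtained as the straightening of the coCartesian fibration $\mathcal{X}' \to \mathcal{D}$ built in \cref{prop:Partial_adjoint_fct}, so all higher coherences are handled automatically. The one point worth a moment's care is the bookkeeping remark in \cref{assumption:partial_adjoints_func} that isomorphic objects receive identified subcategories; this holds here because $\mathcal{C}_T$ is defined as an essential image (the free-algebra functor $\mathcal{C} \to \mathcal{C}^T$) and is therefore invariant under equivalences of $\mathcal{C}^T$.
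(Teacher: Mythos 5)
Your proposal is correct and follows exactly the paper's own route: the paper derives this corollary as an immediate consequence of \cref{prop:f!_exists} and \cref{prop:Partial_adjoint_fct}, which is precisely the instantiation you spell out (with $\mathcal{D} = \Mon_\mathcal{C}$, $X^T = \mathcal{C}^T$, and $X_T = \mathcal{C}_T$ the free algebras). Your additional checks — that the representing object $M(A)$ in the proof of \cref{prop:f!_exists} lies in $\mathcal{C}_M$, and that the subcategories $\mathcal{C}_T$ are invariant under equivalence — are exactly the details the paper leaves implicit.
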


\begin{remark} Because the initial object of $\Mon_{\mathcal{C}}$ is the identity monad $I$ and the Kleilsli category $\mathcal{C}_I$ of $I$ is equivalent to $\mathcal{C}$, it immediately follows that the Klesli category construction can actually be seen as a functor from $\Mon_{\mathcal{C}}$ to the coslice category $(\icat)_{\backslash \mathcal{C}}$, sending each monad $T$ to the free algebra functor $\mathcal{C} \to \mathcal{C}_T$.
\end{remark}

\begin{proposition} \label{prop:partial_adj_natural} Let $X^\bullet$ and $Y^\bullet$ be two functors $\mathcal{D}^{op} \to \icat$ as in \cref{assumption:partial_adjoints_func}. Let $\lambda: X^\bullet \to Y^\bullet$ be a natural transformation between them such that:
  \begin{enumerate}
  \item\label{prop:partial_adj_natural:ass1} For each object $d \in \mathcal{D}$, the functor $\lambda(d) : X^d \to Y^d$ sends $X_d$ to $Y_d$.
   \item\label{prop:partial_adj_natural:ass2} For each morphism $f:d' \to d$ in $\mathcal{D}$, the natural transformation $\lambda(d) f_! \to f_! \lambda(d')$ obtained from the naturality square $\lambda(d') f^* \overset{\sim}{\to} f^* \lambda(d)$ through the partial adjunction between $f_!$ and $f^*$, is an isomorphism.

  \end{enumerate}

Then, there is a natural transformation $\lambda' : X_\bullet \to Y_\bullet$ between the functors $\mathcal{D} \to \icat$ constructed in \cref{prop:Partial_adjoint_fct}, which on objects is the restriction of $\lambda$ and whose naturality isomorphism is the natural isomorphism $\lambda(d) f_! \to f_! \lambda(d')$ mentioned above.

\end{proposition}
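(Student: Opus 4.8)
The plan is to realize the desired natural transformation at the level of (co)cartesian fibrations, exactly paralleling the proof of \cref{prop:Partial_adjoint_fct}. Let $\pi : \mathcal{X} \to \mathcal{D}$ and $\rho : \mathcal{Y} \to \mathcal{D}$ be the cartesian fibrations classified by $X^\bullet$ and $Y^\bullet$, and let $\mathcal{X}' \subset \mathcal{X}$, $\mathcal{Y}' \subset \mathcal{Y}$ be the full subcategories on the objects $(d,x)$ with $x \in X_d$ (resp.\ $y \in Y_d$); by \cref{prop:Partial_adjoint_fct} these are cocartesian fibrations classifying $X_\bullet$ and $Y_\bullet$. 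Under the straightening/unstraightening equivalence, the natural transformation $\lambda$ corresponds to a functor $\Lambda : \mathcal{X} \to \mathcal{Y}$ over $\mathcal{D}$ which preserves cartesian edges and restricts on the fiber over $d$ to $\lambda(d) : X^d \to Y^d$. Assumption \ref{prop:partial_adj_natural:ass1} says exactly that $\Lambda$ carries objects of $\mathcal{X}'$ to objects of $\mathcal{Y}'$, so, both being full subcategories, $\Lambda$ restricts to a functor $\Lambda' : \mathcal{X}' \to \mathcal{Y}'$ over $\mathcal{D}$.

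The main step is to show that $\Lambda'$ preserves cocartesian edges. Recall from the proof of \cref{prop:Partial_adjoint_fct} that a cocartesian edge of $\mathcal{X}'$ over $f : d' \to d$ with source $(d',x)$ is the arrow associated to the unit $x \to f^* f_! x$; concretely it is the composite of this unit (an edge of the fiber $X^{d'}$) with the cartesian lift $f^* f_! x \to f_! x$ of $f$. Applying $\Lambda'$ and using that $\Lambda$ preserves cartesian edges and acts as $\lambda(d')$ on $X^{d'}$, the image is the composite of $\lambda(d')$ applied to the unit with a cartesian lift in $\mathcal{Y}$, its intermediate object being identified with $f^* \lambda(d) f_! x$ via the naturality isomorphism $\lambda(d') f^* \simeq f^* \lambda(d)$. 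Comparing this image with the genuine cocartesian lift of $f$ in $\mathcal{Y}'$ at $(d', \lambda(d') x)$, the universal property of the latter produces a unique fiber map $\phi : f_! \lambda(d') x \to \lambda(d) f_! x$ in $Y^d$ through which the image factors.

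The crux is to identify $\phi$ with the mate of the naturality isomorphism under the partial adjunctions $f_! \dashv f^*$ on both sides, that is, with the inverse of the transformation $\lambda(d) f_! \to f_! \lambda(d')$ of assumption \ref{prop:partial_adj_natural:ass2}. This is a diagram chase in mate calculus: unwinding the factorization above against the triangle identities of the two partial adjunctions shows that $\phi$ and the transformation of \ref{prop:partial_adj_natural:ass2} are mutually inverse mates. I expect this to be the main obstacle, since one must simultaneously track the cartesian lifts, the action of $\Lambda$ on the unit, and the naturality isomorphism. Granting the identification, assumption \ref{prop:partial_adj_natural:ass2} forces $\phi$ to be an equivalence in the fiber $Y^d$; since an arrow that factors as a cocartesian arrow followed by a fiberwise equivalence is again cocartesian, $\Lambda'$ preserves cocartesian edges.

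Finally, $\Lambda' : \mathcal{X}' \to \mathcal{Y}'$ is a cocartesian-edge-preserving functor over $\mathcal{D}$ between cocartesian fibrations, so unstraightening it yields a natural transformation $\lambda' : X_\bullet \to Y_\bullet$ between the functors of \cref{prop:Partial_adjoint_fct}. Its component at $d$ is $\Lambda'|_{X_d} = \lambda(d)|_{X_d}$, the restriction of $\lambda$, and its naturality isomorphism at $f$ is the comparison of cocartesian lifts, namely $\phi^{-1} : \lambda(d) f_! \to f_! \lambda(d')$, which by the identification above is precisely the transformation of \ref{prop:partial_adj_natural:ass2}. This establishes all the asserted properties.
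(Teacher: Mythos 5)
Your proof is correct and takes essentially the same route as the paper's: both unstraighten $\lambda$ to a cartesian-edge-preserving functor over $\mathcal{D}$, restrict it to the subfibrations $\mathcal{X}' \to \mathcal{Y}'$ via assumption (1), show that cocartesian edges are preserved by identifying the image of a cocartesian lift (the unit composed with the naturality isomorphism) with the transformation of assumption (2), and then straighten back. The only differences are cosmetic: where you factor the image edge through the genuine cocartesian lift of $\mathcal{Y}'$ and invoke ``cocartesian followed by a fiberwise equivalence is cocartesian,'' the paper directly identifies the image edge with a unit map up to isomorphism; and the identification of $\phi$ with the mate, which you flag as the main obstacle, is essentially the definition of the adjoint transpose (the transpose is characterized exactly by the factorization through the unit/cocartesian lift), so it is less of a hurdle than you anticipate.
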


\begin{proof} Let $\mathcal{X}, \mathcal{Y} \to \mathcal{D}$ be the cartesian fibrations corresponding to $X,Y : \mathcal{D}^{op} \to \icat$. And let $\mathcal{X'}, \mathcal{Y'} \to \mathcal{D}$ be the cocartesian fibration constructed in the proof of \cref{prop:Partial_adjoint_fct}.

By functoriality of the Grothendieck (or unstraightening) construction, the natural transformation $\lambda$ induces a functor $V : \mathcal{X} \to \mathcal{Y}$ in $(\icat)_{/\mathcal{D}}$ that preserves cartesian arrows. Assumption \ref{prop:partial_adj_natural:ass1}, immediately shows that $V$ restricts to a functor $\mathcal{X'} \to \mathcal{Y'}$ (also in $(\icat)_{/\mathcal{D}}$). Assumption \ref{prop:partial_adj_natural:ass2} translates to the fact that this functor sends cocartesian arrows to cocartesian arrows. Indeed, by uniqueness of cocartesian lifts, any cocartesian arrow in $\mathcal{X}$ is up to equivalence an arrow $(d,x) \to (d',f_!x)$ over $f:d \to d' \in \mathcal{D}$ corresponding to the unit of adjunction $x \to f^* f_! x$ as in the proof of \cref{prop:Partial_adjoint_fct}, for $x \in X_d$. The functor $V$ sends such an arrow to the arrow $(d,\lambda^d(x)) \to (d, \lambda^{d'} f_! x)$. This in turn corresponds to $\lambda^{d} x \to f^*\lambda^{d'} f_! x$ which is the image of the co-unit $x \to f^* f_! x$ under $\lambda^d$ up to the isomorphism $\lambda^d f^* \simeq f^* \lambda^{d'}$. Under assumption $(2)$, this maps identifies with the counit $\lambda^d(x) \to f^* f_! \lambda^d(x)$ and hence corresponds to a cocartesian arrow of $\mathcal{Y'}$.

As $V$ preserves cocartesian arrows from $\mathcal{X'}$ to $\mathcal{Y'}$, it corresponds to a natural transformation between the functors constructed in \cref{prop:Partial_adjoint_fct} with the properties claimed in the proposition.
\end{proof}

\begin{proposition}\label{yonedanat} Let $X^\bullet : \mathcal{D}^{op} \to \icat$ be a functor with subcategories $X_\bullet$ as in \cref{prop:Partial_adjoint_fct}. Then there are natural transformation:

\[ (\mathcal{X}_d)^{op} \to \Fun(\mathcal{X}^d,\Sp)  \]
\[ \mathcal{X}^d \to \Fun(\mathcal{X}_d^{op},\Sp)  \]

which are levelwise the restriction of the Yoneda embeddings. Here, $X_d$ has its covariant functoriality from \cref{prop:Partial_adjoint_fct}, $X^d$ has its original contravariant functoriality and we use the contravariant functoriality of $\Fun(\uvar,\Sp)$ given by restriction of presheaves to make the right hand side into functors with the appropriate variance.
\end{proposition}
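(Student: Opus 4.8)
The plan is to produce both maps as transposes of a single fibrewise pairing, so that the only real content is naturality in $d$, which in both cases will be nothing more than the partial adjunction isomorphism of \cref{assumption:partial_adjoints_func}. Fix a morphism $f : d \to d'$ in $\mathcal{D}$. The first map is, levelwise, the restriction to $X_d$ of the co-Yoneda embedding of $X^d$, namely $(X_d)^{op} \to \Fun(X^d,\Sp)$, $x \mapsto \Map_{X^d}(x,\uvar)$; the second is the restricted Yoneda embedding $X^d \to \Fun(X_d^{op},\Sp)$, $y \mapsto \Map_{X^d}(\uvar,y)|_{X_d}$. Both are transposes, under the exponential adjunction in $\icat$, of the single bifunctor $\Map_{X^d} : (X_d)^{op} \times X^d \to \Sp$. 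With the functorialities specified in the statement, the two naturality squares to be checked both read, on objects $x \in X_d$ and $y \in X^{d'}$, as the comparison between $\Map_{X^{d'}}(f_! x, y)$ and $\Map_{X^d}(x, f^* y)$; this is exactly the adjunction isomorphism defining the partial adjoint $f_! \dashv f^*$, natural in $x$ and in $y$. Thus, once the levelwise maps are organised into genuine natural transformations, their naturality is automatic.

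To organise them coherently I would work with the fibrations of \cref{prop:Partial_adjoint_fct}, as in the proof of \cref{prop:partial_adj_natural}. Recall that $\pi : \mathcal{X} \to \mathcal{D}$ is the Cartesian fibration classifying $X^\bullet$, with fibre $X^d$, and that $\mathcal{X}' \subset \mathcal{X}$ is the coCartesian fibration classifying $X_\bullet$, whose coCartesian edge over $f$ with source $x \in X_d$ is the adjunction unit $x \to f^* f_! x$. The target functors of the statement are clean composites of functors already at hand: $d \mapsto \Fun(X^d,\Sp)$ is $X^\bullet$ followed by the contravariant (restriction) presheaf functor $\Fun(\uvar,\Sp)$, and unstraightens to a coCartesian fibration over $\mathcal{D}$; while $d \mapsto \Fun(X_d^{op},\Sp)$ is $X_\bullet^{op}$ followed by the same presheaf functor, and unstraightens to a Cartesian fibration over $\mathcal{D}$. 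I would then build each comparison as a map of (co)Cartesian fibrations over $\mathcal{D}$, defined fibrewise by the (co)restricted Yoneda embeddings above. Preservation of (co)Cartesian edges is precisely the commutativity of the naturality square, hence precisely the adjunction isomorphism, and so follows from evaluating the mapping-space functor of the single $\infty$-category $\mathcal{X}$ on the coCartesian edges $x \to f^* f_! x$.

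The one genuinely delicate point — and the main obstacle — is producing these comparisons as honest natural transformations rather than as a family of squares commuting only up to unorganised homotopy; the naturality of the Yoneda embedding alone will not suffice, since the two sides involve the opposite variances $f^*$ and $f_!$, and passing between them is exactly what consumes the adjunction. I would sidestep the bookkeeping by exhibiting everything as a single functor on a total category: the mapping-space functor $\Map_{\mathcal{X}} : \mathcal{X}^{op} \times \mathcal{X} \to \Sp$ of the total $\infty$-category, restricted along $(\mathcal{X}')^{op} \hookrightarrow \mathcal{X}^{op}$ in the first variable and cut down to the mixed-variance total category of triples $(d,\, x \in X_d,\, y \in X^d)$ fibred over $\mathcal{D}$ (assembled from $(\mathcal{X}')^{op}$ and $\mathcal{X}$), yields a pairing over $\mathcal{D}$ all of whose higher coherences are automatic because it descends from the mapping-space functor of one $\infty$-category. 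Currying this pairing in each of its two variables — a functorial operation — then produces the two natural transformations simultaneously, with naturality across fibres forced by compatibility of $\Map_{\mathcal{X}}$ with the coCartesian edges $x \to f^* f_! x$, i.e. by the partial adjunction. The remaining verifications — that currying lands in the subcategories of (co)Cartesian-edge-preserving maps, and that the fibrewise functors are indeed the stated restricted Yoneda embeddings — are then routine applications of the $\infty$-categorical Yoneda lemma.
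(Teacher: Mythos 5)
Your levelwise analysis is right: both maps are transposes of the single pairing $\Map_{X^d}(\uvar,\uvar)$ restricted to $(X_d)^{op}\times X^d$, and the naturality squares are exactly the partial adjunction equivalences $\Map_{X^{d'}}(f_!x,y)\simeq\Map_{X^d}(x,f^*y)$; you also correctly isolate the real difficulty, namely organizing these squares into coherent natural transformations. But the mechanism you propose for that coherence has a genuine gap, in two places. First, the ``mixed-variance total category of triples $(d,\,x\in X_d,\,y\in X^d)$ fibred over $\mathcal{D}$'' does not exist as a (co)Cartesian fibration over $\mathcal{D}$: since $d\mapsto (X_d)^{op}$ is covariant while $d\mapsto X^d$ is contravariant, the would-be fibers $(X_d)^{op}\times X^d$ carry no single-variance functoriality in $d$. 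The natural home for this object is a fibration over the twisted arrow category $\mathrm{Tw}(\mathcal{D})$ (equivalently, orthofibration-type technology), and constructing and using that is precisely the coherence problem you are trying to bypass, not a way around it. (Relatedly, $(\mathcal{X}')^{op}$ is a Cartesian fibration over $\mathcal{D}^{op}$, not anything over $\mathcal{D}$; the fibration over $\mathcal{D}$ with fibers $(X_d)^{op}$ is the \emph{fiberwise} opposite, a different object that needs its own construction.)

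Second, and more fatally: even on same-fiber pairs, the restriction of $\Map_{\mathcal{X}}:\mathcal{X}^{op}\times\mathcal{X}\to\Sp$ does not compute fiberwise mapping spaces. For $x,y\in X^d$, the space $\Map_{\mathcal{X}}(x,y)$ fibers over $\Map_{\mathcal{D}}(d,d)$, with the fiber over $\mathrm{id}_d$ being $\Map_{X^d}(x,y)$; for instance if $\mathcal{D}=BG$ for a discrete group $G$, then $\Map_{\mathcal{X}}(x,y)\simeq\coprod_{g\in G}\Map_{X^d}(x,g^*y)$. So the pairing that ``descends from the mapping-space functor of one $\infty$-category'' is the wrong pairing: the coherences you get for free attach to the wrong functor, and your final ``routine'' verification that the fibrewise functors are the stated restricted Yoneda embeddings is false for this construction. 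Extracting the $\mathrm{id}_d$-fiber functorially in $d$ and in $(x,y)$ reintroduces exactly the coherence problem being solved. This is why the paper does not attempt to get the coherence for free: it imports from section 6 of \cite{hebestreit2020orthofibrations} the statement that the Yoneda embedding $\mathcal{C}\to\Prsh(\mathcal{C})$ is natural for the left-adjoint functoriality of $\Prsh$, converts between functorialities using \cref{prop:partial_adj_natural} (whose hypothesis (2) is precisely your adjunction observation), and then obtains the second transformation from the first via the end calculus $\int_d\Fun(\mathcal{X}_d^{op},\Fun(\mathcal{X}^d,\Sp))\simeq\int_d\Fun(\mathcal{X}^d,\Fun(\mathcal{X}_d^{op},\Sp))$ of \cite{gepner2017lax} and \cite{glasman2016spectrum} --- which is the rigorous form of your ``one pairing, curry both ways'' idea, carried out over $\mathrm{Tw}(\mathcal{D})$ rather than over $\mathcal{D}$. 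Some such external input (or equivalent machinery) appears unavoidable here.
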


\begin{proof}

$\Fun(\uvar, \Sp)$ has two different functorialities. Firstly, it has the natural contravariant functoriality used in the statement of the proposition, where each induced map $f^{*} : \Fun(\mathcal{X}^d,\Sp) \rightarrow \Fun(\mathcal{X}^{d'},\Sp)$ induced by $f : X^{d'} \rightarrow X^{d}$ has a right adjoint. The second functoriality is then given by applying \cref{prop:Partial_adjoint_fct} to obtain a covariant functoriality $\mathcal{C} \mapsto \Fun(\mathcal{C},\Sp)$, where morphisms acts as the left adjoint to the reindexing functors given by the contravariant functoriality. It was shown in section 6 of \cite{hebestreit2020orthofibrations} that the Yoneda embeddings $\mathcal{C} \to \Prsh(\mathcal{C})$ can be made into a natural transformation when $\Prsh(\mathcal{C}) = \Fun(\mathcal{C}^{op},\Sp)$ is endowed with this second functoriality.

In particular, we have a natural transformation $(\mathcal{X}^{d})^{op} \to \Fun(\mathcal{X}^d,\Sp)$, or equivalently $\mathcal{X}^d \to \Fun(\mathcal{X}^d,\Sp)^{op}$ where on the right hand side $\Fun(\uvar,\Sp)$ has its covariant (i.e. left adjoint) functoriality.

One can then apply \cref{prop:Partial_adjoint_fct} to $\mathcal{X}_d \subset (\mathcal{X}^{d})$ to recover the covariant functoriality of $\mathcal{X}_d$ (given by the $(f_!)^{op}$) and to $d \mapsto \Fun(\mathcal{X}^d,\Sp)^{op}$ to recover its usual ``precomposition'' functoriality as in the proposition. Hence, \cref{prop:partial_adj_natural} shows that the Yoneda embedding can be assembled into  a natural transformation

\[ (\mathcal{X}_d) \to \Fun(\mathcal{X}^d,\Sp)^{op}.  \]

The first condition \ref{prop:partial_adj_natural:ass1} is vacuous in this case given that the subcategories used on the right  hand side are the whole category, and the second condition is easy to check. Indeed, the natural transformation between the left adjoint coming from the naturality square along a map $f:d \to d' \in \mathcal{D}$ is, for each $X \in \mathcal{X}_d$, the map in $(\Fun(\mathcal{X}^{d'},\Sp))^{op}$, which, when evaluated on a $Y \in \mathcal{X}^{d'}$ is the map

\[\Map(f_!(X),Y) \to \Map(X,f^*(Y)) \]

obtained by applying the $f^*$ functoriality and precomposing with the unit $X \to f^* f_! X$. But essentially by definition, this map is an equivalence.

Taking opposite categories on both sides gives us the first natural transformation mentioned in the proposition:

\[ \mathcal{X}_d^{op} \to \Fun(\mathcal{X}^d,\Sp), \]

which is levelwise given by the restriction of the Yoneda embedding. The second one can be obtained formally from the first ones: informally, a natural transformation $(\mathcal{X}_d)^{op} \to \Fun(\mathcal{X}^d,\Sp)$ can be seen as a dinatural transformation $(\mathcal{X}_d)^{op} \times \mathcal{X}^d \to \Sp$. This, in turn, can be seen as a natural transformation $\mathcal{X}^d \to \Fun(\mathcal{X}_d^{op},\Sp)$ which is the second one. To avoid the use of dinatural transformations in this argument (which to the authors' knowledge have not been formalized in the $\infty$-categorical framework), one can use Proposition 5.1 of \cite{gepner2017lax} or Proposition 2.3 of \cite{glasman2016spectrum}. These assert that for any pairs of functors $F,G : \mathcal{C} \to \mathcal{D}$ the space of natural transformation from $F$ to $G$ can be described as the end\footnote{The end of a functor $\mathcal{C} \times \mathcal{C}^{op} \to \mathcal{D}$ is the limit indexed by the twisted arrow category Tw$(\mathcal{C}) \to \mathcal{C} \times \mathcal{C}^{op}$. See \cite{gepner2017lax} or \cite{glasman2016spectrum} }:

\[ \Map(F,G) \simeq \int_{c \in \mathcal{C}} \Map(F(c),G(c)). \]

In both cases a natural transformation $\lambda :F \to G$ corresponds to an element of the end whose component in $\Map(F(c),G(c))$ is simply $\lambda_c : F(c) \to G(c)$.

Using this (and the functoriality of ends) we have isomorphisms:

\[ \int_{d \in \mathcal{D}} \Fun(\mathcal{X}_d^{op}, \Fun(\mathcal{X}^d,\Sp)) \simeq \int_{d \in \mathcal{D}} \Fun(\mathcal{X}_d^{op} \times \mathcal{X}^d,\Sp) \]
\[ \simeq \int_{d \in \mathcal{D}} \Fun(\mathcal{X}^d, \Fun(\mathcal{X}_d^{op},\Sp)). \]

Through these isomorphisms, we hence obtain a natural transformation $\mathcal{X}^d \to \Fun(\mathcal{X}_d^{op},\Sp)$ that for each $d$ is given by the restricted Yoneda embedding.

\end{proof}

Applying this to the $\infty$-category of monads, we obtain:

\begin{corollary}\label{cor:rest_yon_fct} The restricted Yoneda embeddings $\mathcal{C}^T \to \Prsh(\mathcal{C}_T)$ can be equipped with the structure of a natural transformation between functors $(\Mon_{\mathcal{C}})^{op} \to \icat$.\end{corollary}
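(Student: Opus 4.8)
The plan is to deduce this directly from \cref{yonedanat}, whose machinery is already tailored to exactly this situation; the only genuine work is to check that the $\infty$-category of monads fits its hypotheses. First I would instantiate \cref{assumption:partial_adjoints_func} by taking $\mathcal{D} = \Mon_\mathcal{C}$, letting $X^\bullet$ be the functor $(\Mon_\mathcal{C})^{op} \to \icat$ sending $T \mapsto \mathcal{C}^T$ with its contravariant functoriality $f \mapsto f^*$ (the one obtained earlier by straightening the Cartesian fibration $\LMod(\End \mathcal{C}) \to \Monoid(\End \mathcal{C})$), and taking the full subcategories $X_T = \mathcal{C}_T \subset \mathcal{C}^T$ of free algebras. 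The hypothesis of \cref{assumption:partial_adjoints_func}---that each $f^* : \mathcal{C}^M \to \mathcal{C}^T$ admits a partial left adjoint $f_! : \mathcal{C}_T \to \mathcal{C}_M$ on these subcategories---is precisely the content of \cref{prop:f!_exists}. Hence the standing setup of \cref{yonedanat} applies verbatim to $\Mon_\mathcal{C}$.

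Next I would invoke the second natural transformation produced by \cref{yonedanat}, namely $\mathcal{X}^d \to \Fun(\mathcal{X}_d^{op},\Sp)$, which by construction is levelwise the restricted Yoneda embedding. Specializing $d = T$ yields exactly $\mathcal{C}^T \to \Fun(\mathcal{C}_T^{op},\Sp) = \Prsh(\mathcal{C}_T)$. Both sides are functors $(\Mon_\mathcal{C})^{op} \to \icat$: the source is $T \mapsto \mathcal{C}^T$, and the target is $T \mapsto \Prsh(\mathcal{C}_T)$ equipped with the contravariant (restriction-of-presheaves) functoriality obtained by applying \cref{prop:Partial_adjoint_fct} to the covariant assignment $T \mapsto \mathcal{C}_T$. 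Therefore the transformation has the asserted variance, and its component at $T$ is the restricted Yoneda embedding $\mathcal{C}^T \to \Prsh(\mathcal{C}_T)$, which is the statement of the corollary.

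I expect essentially no obstacle beyond bookkeeping: all the real difficulty---assembling the Yoneda embeddings across the two different functorialities (the contravariant one on $\mathcal{C}^T$ via $f^*$ and the covariant one on $\mathcal{C}_T$ via $f_!$) and passing between the covariant and contravariant presheaf constructions via the end description of natural transformations---is already carried out inside \cref{yonedanat}. The one point I would make explicit is that the covariant functoriality $f \mapsto f_!$ on the subcategories $\mathcal{C}_T$ supplied by \cref{prop:Partial_adjoint_fct} agrees with the Kleisli-category functoriality recorded in \cref{cor:functoriality_of_Kleisli}; this is immediate, since both are defined as the same partial left adjoint $f_!$ of $f^*$.
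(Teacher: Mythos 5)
Your proposal is correct and matches the paper's own (implicit) proof exactly: the paper derives this corollary by applying \cref{yonedanat} to the functor $T \mapsto \mathcal{C}^T$ with the Kleisli subcategories $\mathcal{C}_T \subset \mathcal{C}^T$, whose partial left adjoints are supplied by \cref{prop:f!_exists}, just as you describe. Your closing remark—that the \cref{prop:Partial_adjoint_fct} functoriality on the $\mathcal{C}_T$ is by definition the Kleisli functoriality of \cref{cor:functoriality_of_Kleisli}—is also the correct (and only) bookkeeping point to check.
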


\section{The Monad-Theory Correspondence}\label{sect:monads-theories}

Throughout this section, we fix a locally presentable $\infty$-category $\mathcal{E}$, as well as a \emph{dense, small, full subcategory} $\mathcal{A} \subset \mathcal{E}$. 

We write $\PTh_{\mathcal{A}}$ for the full subcategory of $(\icat)_{\mathcal{A}/ }$ of essentially surjective functors $\mathcal{A} \rightarrow \mathcal{K}$ (with $\mathcal{K}$ also being small). Objects of $\PTh_\mathcal{A}$ are called $\mathcal{A}$-\textit{pretheories}.

\begin{definition}\label{def:Model} For a $\mathcal{A}$-pretheory $\mathcal{K}$, we define the category of $\mathcal{K}$-models as the pullback:

\[
\begin{tikzcd}
\Mod_\mathcal{E}(\mathcal{K}) \ar[r] \ar[dr,phantom,"\lrcorner"very near start] \ar[d] & \Prsh(\mathcal{K}) \ar[d] \\
\mathcal{E} \ar[r] & \Prsh(\mathcal{A}),
  \end{tikzcd}
\]
where the right vertical arrow is the restriction functor and the bottom horizontal arrow is the restricted Yoneda embedding, or ``$\mathcal{A}$-nerve'' functor. That is, it is the composite of the Yoneda embedding $\mathcal{E} \to \Prsh(\mathcal{E})$ with the restriction to $\mathcal{A} \subset \mathcal{E}$.

\end{definition}

\begin{proposition}\label{Prop:Monadic} The forgetful functor $\Mod_\mathcal{E}(\mathcal{K}) \to \mathcal{E}$ is a monadic right adjoint functor. The functor $\Mod_\mathcal{E}(\mathcal{K}) \to \Prsh(\mathcal{K})$ is a fully faithful right adjoint (i.e. is an equivalence to the inclusion of a reflective subcategory). \end{proposition}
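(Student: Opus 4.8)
The plan is to prove the two assertions in order, since the reflectivity statement feeds directly into the monadicity statement. Write $N \colon \mathcal{E} \to \Prsh(\mathcal{A})$ for the nerve (restricted Yoneda) functor and $U \colon \Prsh(\mathcal{K}) \to \Prsh(\mathcal{A})$ for the restriction functor, so that $\Mod_{\mathcal{E}}(\mathcal{K})$ is the pullback of $N$ along $U$, with projections $V \colon \Mod_{\mathcal{E}}(\mathcal{K}) \to \mathcal{E}$ and $j \colon \Mod_{\mathcal{E}}(\mathcal{K}) \to \Prsh(\mathcal{K})$ satisfying $U j \simeq N V$. First I would record the relevant inputs. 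Density of $\mathcal{A}$ means precisely that $N$ is fully faithful; as $\mathcal{E}$ is cocomplete, $N$ admits a left adjoint $L$ (the left Kan extension of $\mathcal{A} \hookrightarrow \mathcal{E}$ along Yoneda), so $\mathcal{E}$ is a reflective subcategory of $\Prsh(\mathcal{A})$. Since $\mathcal{A}$ is small, its objects are jointly $\kappa$-compact in $\mathcal{E}$ for some $\kappa$, and because filtered colimits in $\Prsh(\mathcal{A})$ are pointwise, $N$ preserves $\kappa$-filtered colimits; hence this reflective localization is accessible, i.e.\ it is the localization at some \emph{small} set of maps $S$ in $\Prsh(\mathcal{A})$. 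On the other side, $U$ is precomposition along $\mathcal{A}^{op} \to \mathcal{K}^{op}$: it preserves all limits and colimits (computed pointwise), admits a left adjoint $r_!$ (left Kan extension), and is conservative because $\mathcal{A} \to \mathcal{K}$ is essentially surjective.

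For the reflectivity statement, the pullback of the fully faithful functor $N$ along $U$ is again fully faithful, so $j$ is fully faithful and identifies $\Mod_{\mathcal{E}}(\mathcal{K})$ with the full subcategory of $\Prsh(\mathcal{K})$ on objects $F$ with $U F \in \mathcal{E}$. Using $r_! \dashv U$, an object $F$ satisfies $U F \in \mathcal{E}$ iff $U F$ is $S$-local iff $F$ is $r_!(S)$-local. As $r_!(S)$ is a small set of maps in the presentable $\infty$-category $\Prsh(\mathcal{K})$, this exhibits $\Mod_{\mathcal{E}}(\mathcal{K})$ as an accessible reflective localization; I write $a \colon \Prsh(\mathcal{K}) \to \Mod_{\mathcal{E}}(\mathcal{K})$ for the reflector. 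This proves that $j$ is a fully faithful right adjoint.

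For monadicity I would apply \cref{prop:Pullback_monadic} to the defining pullback square, with $U$ playing the role of the monadic right adjoint and $V$ that of the right adjoint. That $U$ is a monadic right adjoint follows from Lurie's Barr--Beck theorem \cite[Theorem 4.7.3.5]{lurieHA}: it is a right adjoint, it is conservative, and it preserves all colimits, in particular those of $U$-split simplicial objects. It then remains to check that $V$ is a right adjoint, and here I use the reflector $a$ from the previous step: I claim $a \circ r_! \circ N$ is left adjoint to $V$. Indeed, for $X \in \mathcal{E}$ and $m \in \Mod_{\mathcal{E}}(\mathcal{K})$,
\[
\Map_{\Mod_{\mathcal{E}}(\mathcal{K})}(a r_! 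N X, m) \simeq \Map_{\Prsh(\mathcal{K})}(r_! N X, j m) \simeq \Map_{\Prsh(\mathcal{A})}(N X, U j m) \simeq \Map_{\Prsh(\mathcal{A})}(N X, N V m) \simeq \Map_{\mathcal{E}}(X, V m),
\]
using successively $a \dashv j$, $r_! \dashv U$, the identity $U j \simeq N V$, and the full faithfulness of $N$. Hence $V$ is a right adjoint, and \cref{prop:Pullback_monadic} gives that $V$ is monadic.

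The hard part is really the reflectivity step: everything else is formal manipulation with adjunctions together with the cited pullback-monadicity proposition, but the reflectivity of $\Mod_{\mathcal{E}}(\mathcal{K})$ in $\Prsh(\mathcal{K})$ hinges on first knowing that $\mathcal{E} \subseteq \Prsh(\mathcal{A})$ is an \emph{accessible} localization — i.e.\ a localization at a small set of maps — and then transporting that set of maps along $r_!$. The accessibility, in turn, is exactly what the smallness of the dense subcategory $\mathcal{A}$ buys us.
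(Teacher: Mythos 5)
Your proof is correct, and its overall skeleton matches the paper's: both identify the restriction functor $\Prsh(\mathcal{K}) \to \Prsh(\mathcal{A})$ as a monadic right adjoint via the Barr--Beck criterion (conservative because $\mathcal{A} \to \mathcal{K}$ is essentially surjective, plus preservation of all colimits), both observe that the pullback of the fully faithful nerve $N$ is fully faithful, and both conclude monadicity of $\Mod_\mathcal{E}(\mathcal{K}) \to \mathcal{E}$ from \cref{prop:Pullback_monadic}. Where you genuinely diverge is in how the two projections are shown to be right adjoints. The paper invokes \cite[Theorem 5.5.3.18]{lurieHTT}: the defining square is a pullback in the $\infty$-category of presentable $\infty$-categories and accessible right adjoint functors, and since such pullbacks are computed in $\icat$, both projections out of $\Mod_\mathcal{E}(\mathcal{K})$ are automatically right adjoints; this also yields presentability of $\Mod_\mathcal{E}(\mathcal{K})$ for free, which the paper reuses later (e.g.\ in \cref{cor5.5}). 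You instead reverse the order and run a localization-theoretic argument: density and smallness of $\mathcal{A}$ make $\mathcal{E} \subseteq \Prsh(\mathcal{A})$ an accessible reflective localization, hence the localization at a small set $S$ of maps; transporting $S$ along $r_!$ identifies $\Mod_\mathcal{E}(\mathcal{K})$ with the $r_!(S)$-local objects of $\Prsh(\mathcal{K})$, which gives the reflector $a$, and then $a \circ r_! \circ N$ is exhibited as the left adjoint of $V$ by a chain of natural equivalences. Your route trades one black box for another --- you need the equivalence between accessible reflective localizations and localizations at small sets of maps, together with the reflectivity of small-set localizations \cite[\S 5.5.4]{lurieHTT}, in place of \cite[Theorem 5.5.3.18]{lurieHTT} --- but it buys explicit descriptions of the reflector and of the free-model functor $\mathcal{E} \to \Mod_\mathcal{E}(\mathcal{K})$, which the paper's more abstract argument leaves implicit.
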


\begin{proof} The functor $\Prsh(\mathcal{K}) \to \Prsh(\mathcal{A})$ is a monadic right adjoint functor. Indeed, it is conservative because $\mathcal{A} \to \mathcal{K}$ is essentially surjective. It satisfies the condition on split simplicial diagrams because it preserves all colimits and both $\Prsh(\mathcal{K})$ and $\Prsh(\mathcal{A})$ have all colimits.

Moreover, by Theorem 5.5.3.18 of \cite{lurieHTT}, the above can be seen as a pullback in the category of presentable $\infty$-categories and accessible right adjoint functors, hence the functors $\Mod_\mathcal{E}(\mathcal{K}) \to \mathcal{E}$ and $\Mod_\mathcal{E}(\mathcal{K}) \to \Prsh(\mathcal{K})$ are both right adjoint functors.

The monadicity of the first one then follows from \cref{prop:Pullback_monadic} and the second one is fully faithful since it is the pullback of $\mathcal{E} \to \Prsh(\mathcal{A})$ which is fully faithful as $\mathcal{A}$ is dense in $\mathcal{E}$.\end{proof}

\begin{construction}
The functoriality of the pullback in \cref{def:Model} and the contravariant functoriality of $\mathcal{K} \mapsto \Prsh(\mathcal{K})$, make $\Mod_\mathcal{E}(\uvar)$ into a functor $\PTh_{\mathcal{A}}^{op} \rightarrow (\icat)_{ / \mathcal{E}}$. By using the identification of \ref{th:monads=monadic} and taking opposite categories, we obtain a functor:

 \[ \begin{array}{rcl} \PTh_{\mathcal{A}} & \rightarrow & \Mon_{\mathcal{E}} \\
\mathcal{K} & \mapsto & \Mt^\mathcal{K},
\end{array}
\] 

which is characterized by the natural isomorphism $\mathcal{E}^{\Mt^{\mathcal{K}}}  \simeq \Mod_\mathcal{E}(\mathcal{K})$.

\end{construction}

\begin{lemma}\label{slicetopreth}
There is a functor $(\icat)_{\mathcal{A} /} \to \PTh_\mathcal{A}$ which takes each arrow $\mathcal{A} \to \mathcal{X}$ to its essential image $\mathcal{A} \to \mathcal{Y} \subset \mathcal{X}$.
\end{lemma}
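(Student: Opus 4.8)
The plan is to obtain the desired functor as the right adjoint to the fully faithful inclusion $\PTh_{\mathcal{A}} \hookrightarrow (\icat)_{\mathcal{A}/}$; presenting it as an adjoint frees us from having to construct the essential-image assignment coherently by hand, since a right adjoint is automatically a functor. Recall first that any functor $\mathcal{A} \to \mathcal{X}$ factors as $\mathcal{A} \to \mathcal{Y} \hookrightarrow \mathcal{X}$, where $\mathcal{Y} \subseteq \mathcal{X}$ is the full subcategory spanned by the essential image; the first map is essentially surjective and the inclusion $j \colon \mathcal{Y} \hookrightarrow \mathcal{X}$ is fully faithful. Since $\mathcal{X}$ is small, so is $\mathcal{Y}$, so that $(\mathcal{A} \to \mathcal{Y})$ is a genuine object of $\PTh_{\mathcal{A}}$, and $j$ provides a morphism $(\mathcal{A} \to \mathcal{Y}) \to (\mathcal{A} \to \mathcal{X})$ in $(\icat)_{\mathcal{A}/}$ which will serve as the counit.

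By the dual of \cite[Proposition 5.2.7.8]{lurieHTT}, the full inclusion admits a right adjoint provided that, for each object $Q = (\mathcal{A} \to \mathcal{X})$, the morphism $(\mathcal{A} \to \mathcal{Y}) \to Q$ exhibits $(\mathcal{A} \to \mathcal{Y})$ as a colocalization; concretely, one must check that for every pretheory $P = (\mathcal{A} \to \mathcal{K})$, postcomposition with $j$ induces an equivalence
\[ \Map_{(\icat)_{\mathcal{A}/}}\big(P,(\mathcal{A} \to \mathcal{Y})\big) \longrightarrow \Map_{(\icat)_{\mathcal{A}/}}\big(P,(\mathcal{A} \to \mathcal{X})\big). \]
This is the heart of the argument, and it is an instance of the orthogonality between essentially surjective and fully faithful functors.

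To verify it, I would first note that, $j$ being fully faithful, postcomposition $j_* \colon \Fun(\uvar,\mathcal{Y}) \to \Fun(\uvar,\mathcal{X})$ is fully faithful as well, with essential image the functors landing in $\mathcal{Y}$. Because $\mathcal{A} \to \mathcal{K}$ is essentially surjective, a functor $\mathcal{K} \to \mathcal{X}$ lands in $\mathcal{Y}$ if and only if its restriction along $\mathcal{A} \to \mathcal{K}$ does; this is exactly what is needed to see that the square
\[
\begin{tikzcd}
\Fun(\mathcal{K},\mathcal{Y}) \ar[r] \ar[d] & \Fun(\mathcal{A},\mathcal{Y}) \ar[d] \\
\Fun(\mathcal{K},\mathcal{X}) \ar[r] & \Fun(\mathcal{A},\mathcal{X})
\end{tikzcd}
\]
is a pullback of $\infty$-categories. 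Passing to cores (which, being a right adjoint, preserves pullbacks) turns this into a pullback square of mapping spaces, and taking the fibre over the structure map $\mathcal{A} \to \mathcal{X}$ (which lifts to $\mathcal{A} \to \mathcal{Y}$) identifies the two coslice mapping spaces above, as required.

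Finally, with the right adjoint $E \colon (\icat)_{\mathcal{A}/} \to \PTh_{\mathcal{A}}$ in hand, its value on an object is computed by the counit, so $E$ sends $\mathcal{A} \to \mathcal{X}$ to $(\mathcal{A} \to \mathcal{Y})$ as desired. The only real obstacle is the mapping-space computation of the previous paragraph; the conceptual content there is the orthogonal factorization system (essentially surjective, fully faithful) on $\icat$, and the single place where essential surjectivity of $\mathcal{A} \to \mathcal{K}$ is genuinely used is in identifying the displayed square as a pullback.
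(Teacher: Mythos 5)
Your proof is correct, but it takes a genuinely different route from the paper's. The paper proves the lemma by establishing that (essentially surjective, fully faithful) is an orthogonal factorization system on $\icat$ --- identified abstractly as the $(-1)$-connected/$(-1)$-truncated factorization, which exists because $\icat$ is locally presentable (this requires a model-categorical presentability argument and an external reference) --- and then invoking Lurie's general lemma on functoriality of factorizations, \cite[Lemma 5.2.8.19]{lurieHTT}. You instead realize the essential-image construction as the right adjoint to the fully faithful inclusion $\PTh_{\mathcal{A}} \hookrightarrow (\icat)_{\mathcal{A}/}$: you build the counit objectwise and verify the colocalization property by an explicit pullback of functor $\infty$-categories, letting the dual of \cite[Proposition 5.2.7.8]{lurieHTT} supply all the coherence. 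Both arguments turn on the same orthogonality of essentially surjective functors against fully faithful ones; you check it by hand where the paper extracts it from general machinery. Your route is more elementary and self-contained, and it records the useful extra fact that $\PTh_{\mathcal{A}}$ is a coreflective subcategory of $(\icat)_{\mathcal{A}/}$; the paper's route buys the full factorization system on $\icat$, which it reuses later (e.g.\ in \cref{con1.9} for the naturality of $\eta_{\mathcal{K}}$), so with your proof that later step would have to be rephrased through the coreflection. One point you pass over quickly: to see that your square of functor categories is a pullback in the $\infty$-categorical sense, one should note that $\Fun(\mathcal{A},\mathcal{Y}) \to \Fun(\mathcal{A},\mathcal{X})$ is the inclusion of a full subcategory closed under equivalences (essential images are closed under equivalences), so that the strict pullback, spanned by the functors $\mathcal{K} \to \mathcal{X}$ whose restriction to $\mathcal{A}$ lands in $\mathcal{Y}$, computes the $\infty$-categorical one; this is precisely where fullness of $\mathcal{Y}$ enters alongside essential surjectivity of $\mathcal{A} \to \mathcal{K}$, and it is a gloss rather than a gap.
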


\begin{proof}
We claim that in $\icat$ essentially surjective functors and fully faithful functors form an orthogonal factorization system (in the sense of \cite[Definition 5.2.8.8]{lurieHTT}). The result then follows from \cite[Lemma 5.5.8.19]{lurieHTT}. 

Indeed, this is just the (-1)-connected case of the n-connected/n-truncated factorization which exists in any locally presentable $\infty$-category by Proposition 4.6 of \cite{Univalence-Kock}. $\icat$ can be presented as the simplicial category of bifibrant objects of the variant of the Joyal model structure on marked simplicial sets (from \cite[Proposition 3.1.3.7]{lurieHTT} in the special case where $S= \Delta[0]$), which is a simplicial combinatorial model category, so $\icat$ is a locally presentable $\infty$-category by \cite[Theorem A.3.7.6]{lurieHTT}, and the factorization system exists.
\end{proof}

\begin{definition}\label{def1.6}
 Let $\mathrm{Th} : \Mon_{\mathcal{E}} \rightarrow \PTh_{\mathcal{A}}$ be the composite 
\[
\Mon_{\mathcal{E}} \xrightarrow{\mathcal{E}_{\bullet}} (\icat)_{\mathcal{E} /} \xrightarrow{ (-) \circ i} (\icat)_{\mathcal{A} /} \to \PTh_\mathcal{A}
\]
where the first functor is the Kleisli category functor constructed in \cref{cor:functoriality_of_Kleisli} and the last functor is the functor from \ref{slicetopreth} that takes the fullyfaithful-essentially surjective factorization.
\end{definition}

As shown in \ref{lem1.8}, to produce an adjunction of $\infty$-categories, it suffices to produce a counit and unit transformation, and verify the triangle identities on components. We will apply this strategy to show that $\Mt^{(-)} \dashv \Th$. 
\\

\begin{construction}\label{con1.9}
Consider the commutative square from \cref{def:Model}. By taking the left adjoint of each functor,
we get a commutative diagram in $(\mathrm{Cat}_{\infty})$:
\begin{equation}\label{uniteq}
\xymatrix
{
\mathcal{E}^{\Mt^{\mathcal{K}}}  && \ar[ll] \mathrm{Pr}(\mathcal{K}) & \ar[l]_{y_{\mathcal{K}}} \mathcal{K}  \\
\ar[u] \mathcal{E} && \ar[u] \ar[ll] \mathrm{Pr}(\mathcal{A}). & 
}
\end{equation}
By taking the essential image of the top horizontal composite we get a map $\eta_{\mathcal{K}} : \mathcal{K} \rightarrow \mathrm{Th}(\Mt^{\mathcal{K}})$. 
Since we can view \cref{def:Model} as lying in the $\infty$-category of locally presentable $\infty$-categories and accessible functors (\cite[Definition 5.5.3.1]{lurieHTT}) the operation of taking adjoints is functorial in $\mathcal{K}$ (\cite[Corollary 5.5.3.4]{lurieHTT}).

Essentially surjective functors and faithful functors form an orthogonal factorization system \ on $\icat$ (see  \cref{slicetopreth}). Thus, the operation of taking essential image is functorial by \cite[Lemma 5.2.8.19]{lurieHTT}, so $\eta_\mathcal{K}$ is natural in $\mathcal{K}$. This will be the unit of our adjunction. \end{construction}

\begin{construction}\label{con1.10}

We have a diagram natural in $M$
\begin{equation}\label{counit}
\xymatrix
{
\mathcal{E}^{M} \ar@{.>}[dr]^{\epsilon'_{M}} \ar@/_1pc/[ddr] \ar@/^1pc/[drr] & & \\
& \mathcal{E}^{\Mt^{\Th(M)}} \ar[r] \ar[d] & \Prsh(\Th(M)) \ar[d] \\
 & \mathcal{E} \ar[r]_{i^{*}} & \Prsh(\mathcal{A})
}
\end{equation}

The Yoneda functoriality described in \cref{yonedanat} gives us the naturality of the outer square, and the inner square is just \cref{def:Model}. $\epsilon'_{M}$ comes from the universal property of pullback and is hence (contravariantly) natural in $M$. Through the contravariant equivalence of \cref{th:monads=monadic} this corresponds to a natural transformation $\epsilon_M : \mu^{\Th(M)} \to M$, which will be the counit our the monad-theory adjunction.
\end{construction}

\begin{lemma}\label{lem1.11}
$\eta \circ \Th$ and $\Th \circ \epsilon$ are both natural equivalences.  
\end{lemma}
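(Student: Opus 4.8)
The plan is to treat the two assertions separately, reducing each to a mapping-space computation in the relevant Kleisli categories together with the density of $\mathcal{A}$ in $\mathcal{E}$. First I would fix a monad $M$ and record the two functors on objects. By \cref{def1.6}, $\Th(M)$ is the full subcategory of the Kleisli category $\mathcal{E}_M$ spanned by the free algebras on $\mathcal{A}$, with Kleisli mapping spaces $\Map_{\Th(M)}(F_M a, F_M b) \simeq \Map_{\mathcal{E}}(a, M b)$, where $F_M a$ denotes the free $M$-algebra on $a$. Likewise the objects of $\Th(\Mt^{\Th(M)})$ are the free $\Mt^{\Th(M)}$-algebras on $\mathcal{A}$, with mapping spaces $\Map_{\mathcal{E}}(a, \Mt^{\Th(M)} b)$. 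Both $\eta_{\Th(M)}$ and $\Th(\epsilon_M)$ commute with the canonical functors out of $\mathcal{A}$, so both are essentially surjective; the content is full faithfulness.

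For $\eta \circ \Th$, i.e. that $\eta_{\Th(M)}\colon \Th(M) \to \Th(\Mt^{\Th(M)})$ is an equivalence, I would argue for an arbitrary pretheory $\mathcal{K}$ and then specialize. By \cref{con1.9}, $\eta_{\mathcal{K}}$ is the corestriction onto its essential image of the composite $\mathcal{K} \xrightarrow{y_{\mathcal{K}}} \Prsh(\mathcal{K}) \xrightarrow{L} \mathcal{E}^{\Mt^{\mathcal{K}}} \simeq \Mod_{\mathcal{E}}(\mathcal{K})$, where $L$ is the reflection of \cref{Prop:Monadic}. Since the inclusion $\Mod_{\mathcal{E}}(\mathcal{K}) \hookrightarrow \Prsh(\mathcal{K})$ is fully faithful with reflection $L$, the composite $L\circ y_{\mathcal{K}}$ (hence $\eta_{\mathcal{K}}$) is fully faithful as soon as every representable $y_{\mathcal{K}}(k)$ already lies in $\Mod_{\mathcal{E}}(\mathcal{K})$, i.e. restricts along $\mathcal{A} \to \mathcal{K}$ to an object of $\mathcal{E} \subset \Prsh(\mathcal{A})$. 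For $\mathcal{K} = \Th(M)$ and $k = F_M b$ this restriction is the presheaf $a \mapsto \Map_{\mathcal{E}}(a, M b)$, which is exactly the $\mathcal{A}$-nerve of $M(b) \in \mathcal{E}$ and therefore lies in $\mathcal{E}$. Thus each representable is a $\Th(M)$-model, $\eta_{\Th(M)}$ is fully faithful, and being essentially surjective it is an equivalence.

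For $\Th \circ \epsilon$, since $\eta_{\Th(M)}$ is now known to be an equivalence, it suffices to show $\Th(\epsilon_M)$ is fully faithful. The map it induces on mapping spaces is $\Map_{\mathcal{E}}(a, \Mt^{\Th(M)} b) \to \Map_{\mathcal{E}}(a, M b)$, given by postcomposition with the component $(\epsilon_M)_b\colon \Mt^{\Th(M)}(b) \to M(b)$ of the underlying endofunctor transformation of the counit. Because $\mathcal{A}$ is dense, this is an equivalence for all $a \in \mathcal{A}$ precisely when $(\epsilon_M)_b$ is an equivalence in $\mathcal{E}$, so I reduce to checking the latter for every $b \in \mathcal{A}$. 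To identify $(\epsilon_M)_b$ I would trace \cref{con1.10}: the comparison $\epsilon'_M\colon \mathcal{E}^M \to \Mod_{\mathcal{E}}(\Th(M))$ is built from the restricted Yoneda embedding of \cref{yonedanat}, so it sends $F_M a$ to the presheaf $F_M b \mapsto \Map_{\mathcal{E}^M}(F_M b, F_M a) \simeq \Map_{\mathcal{E}}(b, M a)$, that is to the representable $y_{\Th(M)}(F_M a)$. By the computation of the previous paragraph this representable is the free $\Th(M)$-model on $a$, whose underlying object is $M(a)$. Hence $\epsilon'_M$ carries free $M$-algebras to free $\Mt^{\Th(M)}$-models over $\mathcal{E}$, and passing to endomorphism monads via \cref{th:monads=monadic} and \cref{cor:equiv_of_monaidc} exhibits $(\epsilon_M)_b$ as the canonical identification $\Mt^{\Th(M)}(b) \simeq M(b)$ for $b \in \mathcal{A}$. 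Therefore $\Th(\epsilon_M)$ is fully faithful, and being essentially surjective it is an equivalence.

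The main obstacle is the last step: the honest verification that the counit component $(\epsilon_M)_b$ \emph{is} the equivalence $\Mt^{\Th(M)}(b)\simeq M(b)$, rather than merely an arbitrary map between two objects that happen to be abstractly equivalent. This requires carefully unwinding \cref{con1.10} — in particular the naturality of the restricted Yoneda embedding from \cref{yonedanat} and the contravariant dictionary of \cref{th:monads=monadic} relating $\epsilon'_M$ to $\epsilon_M$ — so that the identification of free algebras with representable models is compatible with the monad morphism. As an alternative that sidesteps part of this bookkeeping, once $\eta_{\Th(M)}$ is an equivalence one may instead establish the triangle identity $\Th(\epsilon_M)\circ\eta_{\Th(M)}\simeq \mathrm{id}_{\Th(M)}$ directly from \cref{con1.9} and \cref{con1.10}, whence $\Th(\epsilon_M)$ is a one-sided inverse of an equivalence and so is itself an equivalence.
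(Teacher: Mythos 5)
Your treatment of $\eta \circ \Th$ is correct and follows a genuinely different, and in fact more economical, route than the paper's. You prove full faithfulness of $\eta_{\Th(M)}$ directly: the restriction of the representable $y_{\Th(M)}(F_M b)$ along $\mathcal{A} \to \Th(M)$ is the $\mathcal{A}$-nerve of $M(b)$, so every representable is already a model, and the reflection $L$ onto the reflective subcategory $\Mod_{\mathcal{E}}(\Th(M)) \subset \Prsh(\Th(M))$ is fully faithful on objects of that subcategory, whence $\eta_{\Th(M)} \simeq L \circ y_{\Th(M)}$ (corestricted) is fully faithful and essentially surjective. The paper instead first proves that the composite $\Th(\epsilon_M)\circ\eta_{\Th(M)}$ is an equivalence, using the functoriality of partial left adjoints applied to the factorization $Y_M = G_{\Th(M)} \circ \epsilon'_M$ of the restricted Yoneda embedding; this gives injectivity on homotopy groups of mapping spaces, and surjectivity then comes from $G_{\Th(M)}^L \circ G_{\Th(M)} \simeq \mathrm{id}$ together with the containment $\mathrm{im}(y_{\Th(M)}) \subset \mathrm{im}(G_{\Th(M)})$ --- the same fact you use, but extracted from the commutativity of diagram (\ref{counit}) rather than computed. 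Your route buys a self-contained proof of this half that never mentions the counit; the paper's route buys the equivalence $\Th(\epsilon_M)\circ\eta_{\Th(M)}$ along the way, which it then reuses to finish the other half by two-out-of-three.

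For $\Th \circ \epsilon$, the gap you flag in your primary route is genuine and not small. Knowing that $\epsilon'_M$ sends $F_M b$ to the representable $y_{\Th(M)}(F_M b)$, whose underlying object is $M(b)$, shows only that $\Mt^{\Th(M)}(b)$ and $M(b)$ are abstractly equivalent. What you actually need is that the specific comparison map from the free $\Mt^{\Th(M)}$-model on $b$ to $\epsilon'_M(F_M b)$ adjoint to the unit $b \to M(b)$ --- whose image under the conservative forgetful functor is $(\epsilon_M)_b$, by the proof of \cref{cor:equiv_of_monaidc} --- is an equivalence, and, prior to that, that the map $\Th(\epsilon_M)$ induces on mapping spaces really is postcomposition by $(\epsilon_M)_b$ under the Kleisli identifications. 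Both statements are true, but verifying them is exactly the homotopy-coherent naturality bookkeeping (Yoneda naturality, compatibility of the free--forgetful adjunctions with $\epsilon'_M$) that \cref{th:monads=monadic} and \cref{cor:equiv_of_monaidc} do not hand you on their own; making it precise essentially amounts to re-deriving the machinery of Section \ref{sec:Partial_adjoints}. The paper avoids any pointwise identification of $(\epsilon_M)_b$: it uses the functoriality of partial left adjoints (\cref{prop:Partial_adjoint_fct}, \cref{prop:partial_adj_natural}, \cref{yonedanat}) to obtain, purely formally, that $\Th(\epsilon_M)$ composed with the restricted left adjoint $(G_{\Th(M)})^L|_{\mathrm{im}(y_{\Th(M)})}$ inverts the corestricted Yoneda embedding, hence that $\Th(\epsilon_M)\circ\eta_{\Th(M)}$ is an equivalence, and concludes by two-out-of-three once $\eta_{\Th(M)}$ is known to be an equivalence. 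That is precisely your fallback alternative, so your proposal does close once you take that exit; I would promote the fallback to the main argument and demote the pointwise identification of $(\epsilon_M)_b$ to a (nontrivial) remark, since as written it is asserted rather than proved.
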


\begin{proof}

By \cref{lem1.7} to show that $\eta \circ \Th$ and $ \Th \circ \epsilon$ are natural equivalences, it suffices to show that for each monad $M$, the functors $\eta_{\Th(M)}$ and $\Th(\epsilon_{M})$ are equivalences. We will first show that $\eta_{\Th(M)} \circ \Th(\epsilon_{M})$ is an equivalence. Then we will show that each $\eta_{\Th(M)}$ is an equivalence, from which the required results will follow.  

Given a pretheory $\mathcal{K}$, we write $G_{\mathcal{K}} : \mathcal{E}^{\Mt^{\mathcal{K}}} \rightarrow \Pr(\mathcal{K})$ for the top horizontal map in the pullback of \cref{def:Model}. 
We write $Y_{M} : \mathcal{E}^{M} \rightarrow \Pr(\Th(M))$ for the restricted Yoneda embedding. $Y_{M}$ restricts to an equivalence $S : \Th(M) \simeq im(y_{\Th(M)})$, and the homotopy inverse $\Psi : im(y_{\Th(M)}) \rightarrow \Th(M)$ of $S$ is partial left adjoint of the map $Y_{M}$. Consider the commutative diagram (which is part of the diagram (\ref{counit})):
 $$
\xymatrix
{
\mathcal{E}^{M} \ar[d]_{\epsilon'_{M}} \ar[dr]^{Y_{M}} & \\
\mathcal{E}^{\Mt^{\Th(M)}} \ar[r]_{G_{\Th(M)}} & \Pr(\Th(M)).
&
}
$$
As noted in \cref{Prop:Monadic} $G_{\Th(M)}$ is a fully faithful right adjoint. We write $(G_{\Th(M)})^{L}$ for its left adjoint.
By the functoriality of taking partial left adjoints established in Section \ref{sec:Partial_adjoints} we have that $\Th(\epsilon_{M}) \circ (G_{\Th(M)})^{L}|_{im(y_{\Th(M)})} \simeq \Psi$ (note that $\Th(\epsilon_{M})$ is a partial left adjoint to $\epsilon'_{M}$ by construction). Let $\psi' \psi$ be the factorization of $y_{\Th(M)}$ through its essential image. Since $y_{\Th(M)}$ is fully faithful, $\psi$ is an equivalence. We have that $\eta_{\Th(M)} = (G_{\Th(M)})^{L}|_{im(y_{\Th(M)})} \circ \psi$. Thus, $\Psi \circ \psi = \Th(\epsilon_{M}) \circ \eta_{\Th(M)}$ is an equivalence.

We want to show now that $\eta_{\Th(M)}$ is an equivalence. It is essentially surjective by construction. We want to show that it induces a bijection on homotopy groups of mapping spaces. It induces a monomorphism of homotopy groups of mapping spaces since it has a left inverse. 

As noted in \cref{Prop:Monadic} $G_{\Th(M)}$ is fully faithful, so we have $G_{\Th(M)}^{L} \circ G_{\Th(M)} \simeq id$. $G_{\Th(M)}^{L}$ induces a surjection on homotopy groups for each mapping spaces between objects in the image of $G_{\Th(M)}$. The essential image of the restricted Yoneda embedding in (\ref{counit}) contains the essential image of $y_{\Th(M)}$, so the image of $G_{\Th(M)}$ contains $im(y_{\Th(M)})$ by the commutativity of (\ref{counit}). Thus $G_{\Th(M)}^{L}|_{im(y_{M})}$ induces surjections on homotopy groups of mapping spaces. $\eta_{\Th(M)} = G_{\Th(M)}^{L}|_{im(y_{M})} \circ y_{\Th(M)}$. Thus, we conclude that $\eta_{\Th(M)}$ induces bijections on homotopy groups of mapping spaces as well. 

\end{proof}

\begin{theorem}\label{thm1.12}
$\Mt^{(-)} : \PTh_{\mathcal{A}}  \rightleftarrows \Mon_{\mathcal{E}} : \Th  $ is an idempotent adjunction, with unit $\eta$. 
\end{theorem}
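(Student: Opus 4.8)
The plan is to invoke \cref{lem1.8} with $F = \Mt^{(-)}$ and $G = \Th$, taking the unit $\eta$ from \cref{con1.9} and the counit $\epsilon$ from \cref{con1.10}; idempotence will then come almost for free from \cref{lem1.11}. By \cref{lem1.8} it suffices to prove that, for every pretheory $\mathcal{K}$ and every monad $M$, the two triangle composites
\[ \Mt^{\mathcal{K}} \xrightarrow{\Mt^{\eta_{\mathcal{K}}}} \Mt^{\Th(\Mt^{\mathcal{K}})} \xrightarrow{\epsilon_{\Mt^{\mathcal{K}}}} \Mt^{\mathcal{K}}, \qquad \Th(M) \xrightarrow{\eta_{\Th(M)}} \Th(\Mt^{\Th(M)}) \xrightarrow{\Th(\epsilon_M)} \Th(M) \]
are equivalences. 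The second is immediate: \cref{lem1.11} asserts that both $\eta_{\Th(M)}$ and $\Th(\epsilon_M)$ are equivalences, so their composite is one as well.

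The real content is the first composite, which I denote $c_{\mathcal{K}} \colon \Mt^{\mathcal{K}} \to \Mt^{\mathcal{K}}$. First I would pass to module categories via the fully faithful functor $T \mapsto \mathcal{E}^{T}$ of \cref{th:monads=monadic}, so that $c_{\mathcal{K}}$ is an equivalence of monads if and only if the induced endofunctor $[c_{\mathcal{K}}]$ of $\mathcal{E}^{\Mt^{\mathcal{K}}} \simeq \Mod_{\mathcal{E}}(\mathcal{K})$ is an equivalence. Unwinding the contravariant functoriality, $[c_{\mathcal{K}}] \simeq \Mod_{\mathcal{E}}(\eta_{\mathcal{K}}) \circ \epsilon'_{\Mt^{\mathcal{K}}}$, where $\epsilon'_{\Mt^{\mathcal{K}}}$ is the comparison functor of \cref{con1.10} and $\Mod_{\mathcal{E}}(\eta_{\mathcal{K}})$ is restriction along $\eta_{\mathcal{K}}$. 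I claim this is the identity. Since $\Mod_{\mathcal{E}}(\mathcal{K}) \hookrightarrow \Prsh(\mathcal{K})$ is fully faithful by \cref{Prop:Monadic}, it suffices to check equality after composing with the inclusion $\iota$; and by construction of $\epsilon'_{\Mt^{\mathcal{K}}}$ (as a map into a pullback whose $\Prsh$-leg is the restricted Yoneda embedding) this composite is $\eta_{\mathcal{K}}^{*} \circ Y_{\Mt^{\mathcal{K}}}$, where $Y_{\Mt^{\mathcal{K}}} \colon \mathcal{E}^{\Mt^{\mathcal{K}}} \to \Prsh(\Th(\Mt^{\mathcal{K}}))$ is the restricted Yoneda embedding. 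Evaluating at $X \in \Mod_{\mathcal{E}}(\mathcal{K})$ and $k \in \mathcal{K}$ gives $\Map_{\mathcal{E}^{\Mt^{\mathcal{K}}}}(\eta_{\mathcal{K}}(k), X)$. By \cref{con1.9}, $\eta_{\mathcal{K}}(k)$ is the reflection $L(y_{\mathcal{K}}(k))$ of the representable presheaf into the reflective subcategory $\Mod_{\mathcal{E}}(\mathcal{K})$, so the reflection adjunction $L \dashv \iota$ together with the Yoneda lemma yields $\Map_{\mathcal{E}^{\Mt^{\mathcal{K}}}}(L y_{\mathcal{K}}(k), X) \simeq \Map_{\Prsh(\mathcal{K})}(y_{\mathcal{K}}(k), \iota X) \simeq X(k)$, naturally in $k$ and $X$. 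Hence $\iota \circ [c_{\mathcal{K}}] \simeq \iota$, so $[c_{\mathcal{K}}]$ is the identity and $c_{\mathcal{K}}$ is an equivalence.

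With both triangle composites shown to be equivalences, \cref{lem1.8} produces the adjunction $\Mt^{(-)} \dashv \Th$ with unit $\eta$. Idempotence is then formal: by the characterization of idempotent adjunctions recalled above, it is enough that one of $\epsilon L$, $R\epsilon$, $\eta R$, $L\eta$ be an equivalence, and $\eta R = \eta \circ \Th$ has components $\eta_{\Th(M)}$, which are equivalences by \cref{lem1.11}.

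The main obstacle is precisely the first triangle identity; everything else is assembling \cref{lem1.11} with the formal idempotence lemma. Within that step, the delicate point is the identification $[c_{\mathcal{K}}] \simeq \Mod_{\mathcal{E}}(\eta_{\mathcal{K}}) \circ \epsilon'_{\Mt^{\mathcal{K}}}$ and, especially, the assertion that $\eta_{\mathcal{K}}(k)$ really is the reflection $L y_{\mathcal{K}}(k)$. This requires reconciling the two descriptions of $\Th(\Mt^{\mathcal{K}})$ — as the essential image of $\mathcal{A}$ inside the Kleisli category (\cref{def1.6}) and as the essential image of the top composite of \eqref{uniteq} (\cref{con1.9}) — using that $\mathcal{A} \to \mathcal{K}$ is essentially surjective and that the square of left adjoints in \eqref{uniteq} commutes.
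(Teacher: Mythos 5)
Your proposal is correct and follows essentially the same route as the paper's own proof: reduce via \cref{lem1.8} and \cref{lem1.11} to the triangle composite $\epsilon_{\Mt^{\mathcal{K}}} \circ \Mt^{\eta_{\mathcal{K}}}$, pass to algebra categories through \cref{th:monads=monadic}, identify the resulting endofunctor composed with the fully faithful $G_{\mathcal{K}}$ as $\eta_{\mathcal{K}}^{*} \circ Y_{\Mt^{\mathcal{K}}}$, and compute it to be $G_{\mathcal{K}}$ itself using $\eta_{\mathcal{K}}(k) \simeq G_{\mathcal{K}}^{L}(y_{\mathcal{K}}(k))$, the reflection adjunction, and the Yoneda lemma. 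The only cosmetic differences are that you conclude by fully faithfulness of the inclusion where the paper phrases it as two-out-of-three, and you make explicit the appeal to the idempotence lemma via $\eta \circ \Th$ that the paper leaves implicit.
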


\begin{proof}

By \cref{lem1.11} and \cref{lem1.8}, it remains to verify that $\epsilon, \eta$ satisfy the second of the triangle identities, i.e. that for all $\mathcal{A}$-pretheory $\mathcal{K}$, the morphism of monads $\epsilon_{\Mt^\mathcal{K}} \circ \Mt^{\eta_\mathcal{K}}$ is an equivalence. As these are morphisms of monads, we will work through the equivalence of \cref{th:monads=monadic} and instead show the induced functor between $\infty$-categories of algebras is an equivalence.

We have a commutative diagram, functorial in $\mathcal{K}$
\[
\xymatrix
{
\mathcal{E}^{\Mt^{K}} \ar[d]_{\epsilon'_{\Th(M)}} \ar[drr]^{Y_{\Mt^{\mathcal{K}}}} && \\
\mathcal{E}^{\Mt^{\Th(\Mt^{\mathcal{K}})}} \ar[rr]_{G_{\Th(\Mt^{\mathcal{K}})}} \ar[d]_{(\mathcal{E}^{\mu^{\eta_{\mathcal{K}}}})} && \Pr(\Th(\Mt^{\mathcal{K}})) \ar[d]^{\Pr(\eta_{\mathcal{K}})} \\
\mathcal{E}^{\Mt^{K}} \ar[rr]_{G_{\mathcal{K}}} && \Pr(\mathcal{K})
}
\]
where $Y_{\Mt^{\mathcal{K}}}$ is the restricted Yoneda embedding. We want to show that the composite of two left vertical functors is an equivalence.
The composite of the functor $\Pr(\eta_{\mathcal{K}}) \circ Y_{\Mt^{\mathcal{K}}}$ is given by $$x \mapsto (y \mapsto \Map_{\Pr(\mathcal{K})}(G_{\mathcal{K}}^{L} \circ y_{\mathcal{K}}(y), x)),$$ where $y_{\mathcal{K}}$ is the Yoneda embedding. This is naturally equivalent to the functor 
$$\mathcal{E}^{\Mt^{K}} \rightarrow \Pr(\mathcal{K}), x \mapsto (y \mapsto \Map_{\Pr(\mathcal{K})}(y_{\mathcal{K}}(y), G_{\mathcal{K}}(x)))$$

which is equivalent to $G_{\mathcal{K}}$, by the $\infty$-categorical Yoneda Lemma (see \cite[Proposition 5.5.2.1]{lurieHTT}, or rather \cite[Theorem 5.8.13.(ii)]{Cisinski-HigherCats} as we need the equivalence to be functorial).

Thus, we have that $G_{\mathcal{K}} \circ (\mathcal{E}^{\eta_{\mathcal{K}}})^{op} \circ \epsilon_{\Th(M)}^{op} \simeq G_{\mathcal{K}}$. Since $G_{\mathcal{K}}$ is fully faithful, and thus an equivalence onto its essential image, we conclude that $(\mathcal{E}^{\eta_{\mathcal{K}}})^{op} \circ \epsilon_{\Th(M)}^{op}$ is an equivalence by 2 out of 3. 

\end{proof}

\begin{remark} Note that there is nothing asymmetric between $\eta$ and $\epsilon$ and we have also proved that $\epsilon$ is a counit of adjunction. We just have not showed any coherence conditions between this counit $\epsilon$ and the unit $\eta$. \end{remark}

\begin{definition} A monad $M$ on $\mathcal{E}$ is said to be \emph{$\mathcal{A}$-nervous} if $\epsilon_M$ is an equivalence, i.e. if the square

\[
\begin{tikzcd}
  \mathcal{E}^M \ar[r] \ar[d] & \Prsh(\Th(M)) \ar[d] \\
\mathcal{E} \ar[r] & \Prsh(\mathcal{A})
\end{tikzcd}
\]

is a pullback square. An $\mathcal{A}$-pretheory $\mathcal{K}$ is said to be an \emph{$\mathcal{A}$-theory} if $\eta_\mathcal{K}$ is an equivalence. 
\end{definition}

The following then immediately follows from \cref{thm1.12} and Remark \ref{idempobasics}:

\begin{corollary} For any monad $M$, $\Th(M)$ is an $\mathcal{A}$-theory, and for any $\mathcal{A}$-pretheory $\mathcal{K}$, the associated monad $\mu^\mathcal{K}$ is $\mathcal{A}$-nervous. Moreover, the monad-theory adjunction restricts to an equivalence between the full subcategories of $\mathcal{A}$-Nervous monads and $\mathcal{A}$-theories. \end{corollary}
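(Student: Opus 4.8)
The plan is to deduce the corollary formally from the idempotence of the adjunction $\mu^{(-)} \dashv \Th$ established in \cref{thm1.12}, observing that the definitions of ``$\mathcal{A}$-nervous monad'' and ``$\mathcal{A}$-theory'' are precisely the conditions cut out by the unit $\eta$ and counit $\epsilon$. The whole argument is essentially bookkeeping about essential images of adjoints, so no genuinely hard step is expected.

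First I would invoke the characterization of idempotent adjunctions: since $\mu^{(-)} \dashv \Th$ is idempotent with unit $\eta$ and counit $\epsilon$, all four whiskered transformations $\epsilon\mu^{(-)}$, $\Th\,\epsilon$, $\eta\,\Th$, $\mu^{(-)}\eta$ are equivalences. Evaluating $\eta\,\Th$ at a monad $M$ gives that $\eta_{\Th(M)}$ is an equivalence, which is by definition the assertion that $\Th(M)$ is an $\mathcal{A}$-theory (this is also exactly what \cref{lem1.11} records). Dually, evaluating $\epsilon\mu^{(-)}$ at a pretheory $\mathcal{K}$ shows $\epsilon_{\mu^\mathcal{K}}$ is an equivalence, i.e.\ $\mu^\mathcal{K}$ is $\mathcal{A}$-nervous. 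This settles the first two assertions.

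For the final claim I would identify each full subcategory with the essential image of the corresponding adjoint. On one side, every object of $im(\Th)$ is an $\mathcal{A}$-theory by the first assertion (and ``being an $\mathcal{A}$-theory'' is invariant under equivalence, as $\eta$ is natural); conversely, if $\mathcal{K}$ is an $\mathcal{A}$-theory then $\eta_\mathcal{K}\colon \mathcal{K}\to\Th(\mu^\mathcal{K})$ is an equivalence, so $\mathcal{K}\simeq\Th(\mu^\mathcal{K})\in im(\Th)$. Hence the $\mathcal{A}$-theories coincide with $im(\Th)$, and symmetrically the $\mathcal{A}$-nervous monads coincide with $im(\mu^{(-)})$. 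By \cref{idempobasics}, the idempotent adjunction restricts to an equivalence $im(\mu^{(-)})\simeq im(\Th)$, which under these identifications is exactly the claimed equivalence between $\mathcal{A}$-nervous monads and $\mathcal{A}$-theories.

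The only point requiring a moment of care is this last double inclusion identifying each essential image with the subcategory defined by the relevant unit/counit equivalence; this is the standard ``essentially by definition'' fact for idempotent adjunctions recorded in \cref{idempobasics}, but I would spell out both directions to be safe. Everything else is an immediate reading-off of \cref{thm1.12} and \cref{lem1.11}.
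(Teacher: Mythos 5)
Your proposal is correct and follows essentially the same route as the paper, which derives the corollary immediately from the idempotence of the adjunction in \cref{thm1.12} together with \cref{idempobasics}; your extra step identifying the $\mathcal{A}$-theories with the essential image of $\Th$ and the $\mathcal{A}$-nervous monads with the essential image of $\mu^{(-)}$ is exactly the "essentially by definition" content of that remark, just spelled out.
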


\section{General consequences of the Monad-Theories adjunction}
\label{sec:general_consequence}

In this section we draw general consequences from the monad-theory adjunction of \cref{thm1.12}. First, one can use it to construct and study colimits of $\mathcal{A}$-Nervous monads:

\begin{theorem}\label{th:colim_of_} Let $\mathcal{E}$ be a presentable $\infty$-category, and let $\mathcal{A} \subset \mathcal{E}$ be a full dense small subcategory. Then the full subcategory of $\Mon_\mathcal{E}$ of $\mathcal{A}$-Nervous monads has all colimits and they are preserved by the inclusion in $\Mon_\mathcal{E}$.  Moreover, the contravariant functor sending a monad to its category of algebras preserves these colimits. That is, the natural map:

\[ \mathcal{E}^{\colim M_i} \to \lim_{i\in I} \mathcal{E}^{M_i} \]
is an equivalence.

\end{theorem}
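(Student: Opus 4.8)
The plan is to deduce everything from the idempotent adjunction $\Mt^{(-)} \dashv \Th$ of \cref{thm1.12}, transporting the (easily understood) colimits of pretheories across it. Recall that the $\mathcal{A}$-nervous monads are exactly the essential image of the left adjoint $\Mt^{(-)} : \PTh_{\mathcal{A}} \to \Mon_{\mathcal{E}}$: each $\Mt^{\mathcal{K}}$ is $\mathcal{A}$-nervous, and $M$ is $\mathcal{A}$-nervous precisely when the counit $\Mt^{\Th(M)} \to M$ is an equivalence. Since $\Mt^{(-)}$ is a left adjoint it preserves all colimits, so the whole statement reduces to understanding colimits in $\PTh_{\mathcal{A}}$ and then applying $\Mt^{(-)}$.

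First I would establish that $\PTh_{\mathcal{A}}$ is cocomplete. The coslice $(\icat)_{\mathcal{A}/}$ is cocomplete because $\icat$ is, and $\PTh_{\mathcal{A}} \subset (\icat)_{\mathcal{A}/}$ is the full subcategory of essentially surjective maps out of $\mathcal{A}$, i.e.\ the left class of the factorization system of \cref{slicetopreth} restricted to the coslice. A direct lifting argument using the unique factorization shows that the essential-image functor of \cref{slicetopreth} is \emph{right} adjoint to the inclusion $\PTh_{\mathcal{A}} \hookrightarrow (\icat)_{\mathcal{A}/}$; hence $\PTh_{\mathcal{A}}$ is coreflective and in particular closed under colimits, which are therefore computed in $(\icat)_{\mathcal{A}/}$. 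Now, given a small diagram $M_\bullet : I \to \Mon_{\mathcal{E}}$ of $\mathcal{A}$-nervous monads, set $K_\bullet := \Th(M_\bullet)$; nervousness makes the counit into a natural equivalence $\Mt^{K_\bullet} \simeq M_\bullet$, so $M_\bullet \simeq \Mt^{(-)} \circ K_\bullet$ as diagrams. As $\Mt^{(-)}$ preserves colimits and $\colim_I K_\bullet$ exists, the colimit $\colim_I M_\bullet \simeq \Mt^{\,\colim_I K_\bullet}$ exists in $\Mon_{\mathcal{E}}$ and, lying in the image of $\Mt^{(-)}$, is $\mathcal{A}$-nervous. Since the subcategory of nervous monads is full and this colimit lies in it, it is a fortiori the colimit there; this proves both cocompleteness and preservation by the inclusion into $\Mon_{\mathcal{E}}$.

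For the assertion about algebras I would invoke \cref{th:monads=monadic}: the functor $\Phi : (\Mon_{\mathcal{E}})^{op} \to (\icat)_{/\mathcal{E}}$, $M \mapsto \mathcal{E}^{M}$, is an equivalence onto the reflective full subcategory $\Md_{\mathcal{E}}$. An equivalence preserves limits, and a reflective subcategory is closed under limits in its ambient $\infty$-category; hence $\Phi$ carries $\colim_I M_i$ (a limit in $(\Mon_{\mathcal{E}})^{op}$) to the limit of $\mathcal{E}^{M_\bullet}$ computed in $(\icat)_{/\mathcal{E}}$. This slice limit is exactly $\lim_i \mathcal{E}^{M_i}$ formed using the forgetful functors over $\mathcal{E}$ (for a coproduct of two monads, for instance, it is the fibre product $\mathcal{E}^{M_1} \times_{\mathcal{E}} \mathcal{E}^{M_2}$), and the canonical comparison map of the statement is precisely this equivalence. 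A self-contained alternative, avoiding the reflectivity input, is to use $\mathcal{E}^{\colim M_i} \simeq \Mod_{\mathcal{E}}(\colim_I K_i)$ and argue directly from the pullback of \cref{def:Model}: since $\Prsh(-) = \Fun((-)^{op},\Sp)$ sends colimits of $\infty$-categories to limits and base change along $\mathcal{E} \to \Prsh(\mathcal{A})$ preserves limits, $\Mod_{\mathcal{E}}(-)$ sends colimits of pretheories to limits in $(\icat)_{/\mathcal{E}}$.

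The computations are all formal; the one genuinely fiddly point, which I expect to be the main obstacle, is the bookkeeping of \emph{where} the (co)limits live. Colimits of pretheories are coslice colimits (so, e.g., coproducts are amalgamated over $\mathcal{A}$ rather than disjoint), and correspondingly the right-hand side $\lim_i \mathcal{E}^{M_i}$ must be read as a limit in $(\icat)_{/\mathcal{E}}$ rather than in $\icat$. Matching these two ``relative'' conventions — concretely, turning the initial cone point $\mathcal{A}$ of the coslice colimit into the terminal cone point $\mathcal{E}$ of the slice limit under $\Prsh$ and base change — is what makes the clean formula hold for disconnected indexing categories $I$, and is the step I would write out most carefully.
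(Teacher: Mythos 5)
Your proposal is correct and takes essentially the same route as the paper: colimits of pretheories are computed in the coslice $(\icat)_{\mathcal{A}/}$, a diagram of nervous monads is identified via the counit with $\Mt^{\Th(M_\bullet)}$, and the colimit is transported through the colimit-preserving left adjoint $\Mt^{(-)}$ of the idempotent adjunction, landing in the (full) subcategory of nervous monads. For the statement about algebras, your primary argument (via \cref{th:monads=monadic} and reflectivity of $\Md_{\mathcal{E}}$) is a mild variant, but your ``self-contained alternative'' --- that $\Mod_{\mathcal{E}}(\uvar)$ sends colimits of pretheories to limits because $\Prsh(\uvar)$ sends colimits of $\infty$-categories to limits and base change along $\mathcal{E} \to \Prsh(\mathcal{A})$ preserves them --- is exactly the paper's proof, including the care you take that the limit is formed in $(\icat)_{/\mathcal{E}}$.
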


\begin{proof}
  The $\infty$-category of $\mathcal{A}$-pretheories is just the full subcategory of $(\icat)_{\mathcal{A}/}$ of essentially surjective functors, so it has all colimits and they are computed in $(\icat)_{\mathcal{A}/}$. This can be used to compute colimits of $\mathcal{A}$-nervous monads. Indeed, if $(M_i)_{i\in I}$ is a diagram of $\mathcal{A}$-nervous monads, then it induces a diagram $(T_i)_{i\in I}$ of $\mathcal{A}$-theories. The colimit $\colim T_i$ in the $\infty$-category of $\mathcal{A}$-pretheories exists, is preserved by the left adjoint of the monad-theory correspondence and is thus taken by this left adjoint to a colimit of the diagram $(M_i)_{i\in I}$.
  
The claim about categories of algebras actually holds for general colimits of monads (when they exist) as one can show that every object admits an endomorphism monad and one can use the universal property of the colimits for maps to endomorphism monads. Alternatively, one can also use the description of colimits given above: given that the associated monad functor sends each theory $T$ to a monad $\Mt^T$ such that $T$-models get identified functorially with $\Mt^T$-algebras, it is enough to check that the (contravariant) functor sending each pretheory to its category of models send colimits to limits. But this follows immediately from the fact that $\mathcal{C} \mapsto \Prsh(\mathcal{C}) \simeq \Fun(\mathcal{C}^{op},\mathcal{S})$ send colimits to limits.
\end{proof}

To make this useful, one needs to provide a large supply of nervous monads. The next step is \ref{th:arities_imply_nervous} that essentially claims that all accessible monads are nervous monads.

Following \cite{berger2012monads}, one defines:

\begin{definition}\label{def:monad_with_arities} Let $\mathcal{A} \subset \mathcal{E}$ be a full subcategory. Let $M$ be a monad on $\mathcal{E}$. One says that $M$ is a \textit{monad with arities in} $\mathcal{A}$ if for each $X \in \mathcal{E}$, the canonical colimit

\[ X \simeq \colim_{a \in \mathcal{A}_{/X}} a \]
is preserved the composite
\[ \mathcal{E} \overset{M}{\to} \mathcal{E} \overset{i}{\to} \Prsh(\mathcal{A}), \]
where $i$ denotes the (fully faithful) restricted Yoneda embeddings.

\end{definition}

As in in the $1$-categorical case, we will show that all monads with arities in $\mathcal{A}$ are in fact $\mathcal{A}$-nervous. The proof follows essentially the same strategy as in \cite{berger2012monads}. Note that the converse is not true, it is shown in \cite{bourke2019monads} that the free groupoid monad on the category of graphs is an example of a $\mathcal{A}$-nervous monad which is not a monad with arities in $\mathcal{A}$, for $\mathcal{A}$ the full subcategory of linear graphs.

\begin{theorem}\label{thm5.1}
Suppose that we have a commutative square of $\infty$-categories
\[
\xymatrix
{
U \ar[d]_{R_{1}} \ar[r]_{\Phi} & V \ar[d]^{R_{2}} \\
A  \ar[r]_{\Psi} & B
}
\]
where:

\begin{itemize}
\item $\Psi$ is fully faithful,
\item $R_{1}, R_{2}$ are monadic right adjoint functors, with left adjoint $L_1$ and $L_2$,
\item  the natural transformation $L_{2}\Psi \to \Phi L_{1}$ obtains from these adjunction is invertible.
\end{itemize}

Then the square is a pullback of $\infty$-categories.
\end{theorem}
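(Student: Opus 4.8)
The plan is to identify $U$ with the pullback $P := A\times_B V$ through the canonical comparison functor and then invoke \cref{cor:equiv_of_monaidc}. Write $\pi_A\colon P\to A$ and $\pi_V\colon P\to V$ for the two projections, and set $T_i := R_iL_i$ for the monads induced by the two monadic adjunctions. The commutativity of the square, i.e.\ an equivalence $\Psi R_1\simeq R_2\Phi$, provides a functor $t\colon U\to P$ compatible with both projections, so that $\pi_A t\simeq R_1$ and $\pi_V t\simeq \Phi$. Once I know $\pi_A$ is itself a monadic right adjoint, \cref{cor:equiv_of_monaidc} applied to the two monadic functors $R_1\colon U\to A$ and $\pi_A\colon P\to A$ (comparing them via $t$) reduces the entire theorem to checking that a single mate natural transformation is invertible; this is where the three hypotheses will be spent.

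First I would produce a left adjoint to $\pi_A$. Applying $R_2$ to the hypothesis $L_2\Psi\simeq \Phi L_1$ and using $R_2\Phi\simeq\Psi R_1$ yields the intertwining equivalence $\Psi T_1\simeq T_2\Psi$. Consequently $L_P(a):=(T_1 a,\; L_2\Psi a)$ is a well-defined object of $P$, since $\Psi(T_1 a)\simeq T_2\Psi a=R_2(L_2\Psi a)$. To see that $L_P\dashv\pi_A$, I would compute, for $(a',v')\in P$ with $\Psi a'\simeq R_2 v'$, the mapping space in the pullback as a homotopy fibre product
\[ \Map_P\big(L_P(a),(a',v')\big)\simeq \Map_A(T_1 a,a')\times^h_{\Map_B(\Psi T_1 a,\Psi a')}\Map_V(L_2\Psi a,v'). \]
The left-hand leg $\Map_A(T_1 a,a')\to\Map_B(\Psi T_1 a,\Psi a')$ is an equivalence because $\Psi$ is fully faithful, so the fibre product collapses onto $\Map_V(L_2\Psi a,v')\simeq\Map_B(\Psi a,R_2 v')\simeq\Map_A(a,a')$, the final two steps using the adjunction $L_2\dashv R_2$ and full faithfulness of $\Psi$ once more. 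This exhibits $L_P$ as left adjoint to $\pi_A$, and then \cref{prop:Pullback_monadic} (with $R_2$ monadic and $\pi_A$ a right adjoint) shows $\pi_A$ is monadic.

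It remains to verify the hypothesis of \cref{cor:equiv_of_monaidc}: that the mate $L_P\to t L_1$ of the equivalence $R_1\simeq\pi_A t$ is invertible. Because equivalences in the pullback $P$ are detected by the projections $\pi_A$ and $\pi_V$, it suffices to check that the mate becomes an equivalence after applying each projection. Post-composing with $\pi_A$ gives $\pi_A L_P\simeq T_1\to \pi_A tL_1\simeq R_1L_1=T_1$, which the triangle identities force to be (an equivalence homotopic to) the identity. Post-composing with $\pi_V$ gives $\pi_V L_P\simeq L_2\Psi\to \pi_V tL_1\simeq \Phi L_1$, and the crux is to identify this with the transformation $L_2\Psi\to\Phi L_1$ supplied in the hypothesis, which is invertible by assumption. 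Establishing this identification — namely that the projection through $\pi_V$ of the mate built from $R_1\simeq\pi_A t$ agrees with the mate of the commutativity square $\Psi R_1\simeq R_2\Phi$ — is the main obstacle and the only genuinely delicate point; I expect it to follow from the compatibility of the mate construction with post-composition by the projection functors. Granting it, the mate $L_P\to tL_1$ is an equivalence, so \cref{cor:equiv_of_monaidc} shows $t$ is an equivalence and the square is a pullback.
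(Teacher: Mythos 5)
Your strategy coincides with the paper's: form the pullback $P=A\times_B V$, take the comparison functor $t\colon U\to P$, show the projection $\pi_A$ is a monadic right adjoint (via \cref{prop:Pullback_monadic}), and conclude from \cref{cor:equiv_of_monaidc} once the mate $L_P\to tL_1$ is known to be invertible. Your construction of $L_P$ by the formula $a\mapsto(T_1a,\,L_2\Psi a)$ and the mapping-space computation collapsing the fibre product are fine. But the proof has a genuine gap exactly where you flag it, and it is not a routine compatibility that one can simply grant. Because you produced the adjunction $L_P\dashv\pi_A$ abstractly, by pointwise representability, its counit $\epsilon_P$ is characterized only as ``the image of the identity under the chain of equivalences''; to identify $\pi_V\epsilon_P$ with $\epsilon_2\pi_V$ (modulo $\Psi\pi_A\simeq R_2\pi_V$), and hence $\pi_V$ of the mate with the hypothesis transformation $L_2\Psi\to\Phi L_1$, one must actually trace the identity morphism through that chain --- work you do not do. The $\pi_A$-component is in worse shape than you suggest: the second map in your composite $T_1\to T_1T_1\to T_1$ is $\pi_A\epsilon_P$ evaluated at $tL_1$, the projection of the counit of the \emph{new} adjunction, not $R_1\epsilon_1L_1$, so the triangle identities for $L_1\dashv R_1$ alone cannot close the argument; identifying the two requires the same tracing of $\epsilon_P$ plus a mate identity, and it silently uses the invertibility of $\beta:=(L_2\Psi\to\Phi L_1)$, since your intertwining $\Psi T_1\simeq T_2\Psi$ is itself built from $\beta^{-1}$.

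The missing idea --- the one the paper uses to make both verifications disappear --- is to exploit full faithfulness of $\Psi$ \emph{before} constructing anything. Since $\Psi$, and hence its pullback $\Psi'$, is fully faithful, one may assume up to equivalence that $A\subseteq B$ and $P\subseteq V$ are full subcategories. Then $\pi_A$ is literally the restriction of $R_2$; the hypothesis $L_2\Psi\simeq\Phi L_1$ shows $L_2$ carries $A$ into $P$ (as $R_2\Phi L_1\simeq\Psi R_1L_1$ lands in $A$), so the corestriction of $L_2$ is a left adjoint of $\pi_A$ with the \emph{same} unit and counit as $L_2\dashv R_2$; and consequently the mate $L'_2\to tL_1$ literally is the transformation $L_2\Psi\to\Phi L_1$, with no compatibility left to check. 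Your argument can surely be completed along the lines you indicate, but as written its central step is assumed (``Granting it''), so the proof is incomplete.
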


\begin{proof}
We form the pullback:

\[
\begin{tikzcd}
U \ar[r,dotted,"t"description] \ar[dr,"R_1"below] \ar[rr,bend left=30,"\Phi"] &  W \ar[dr,phantom,"\lrcorner"very near start] \ar[r,"\Psi'"description] \ar[d,"R'_2"swap] & V \ar[d,"R_2"] \\
& A \ar[r,"\Psi"below] & B
\end{tikzcd}
\]

We will show that $t$ is an equivalence using \cref{cor:equiv_of_monaidc}. That is we will show that $R'_2$ is a monadic right adjoint functor and that the natural transformation $L'_2 \to t L_1$ is an equivalence of categories.

$\Psi$, and hence its pullback $\Psi'$ are both fully faithful, so up to equivalences of categories, one can freely assume that $W$ and $A$ are full subcategories of $V$ and $B$. In this case, $R'_2$ is just the restriction of $R_2$ to a functor $W \to A$. The isomorphisms $L_{2}\Psi \simeq \Phi L_{1}$ show that if $X \in A$ then $L_2 X \in W$, which  immediately implies that $L_2$ corestricted to a functor $A \to W$ is a left adjoint to $R'_2$. Hence, by \cref{prop:Pullback_monadic}, $R'_2$ is indeed a monadic functor. Now, again as we are simply restricting to full subcategories, the natural transformation $L'_2 \to t L_1$ is exactly the same as $L_2 \Psi \to \Phi L_1$ and hence is invertible. 

\end{proof}

\begin{theorem}\label{th:arities_imply_nervous} Given $\mathcal{E}$ a presentable $\infty$-category and $\mathcal{A} \subset \mathcal{E}$ a full dense small subcategory, then any monad $M$ with arities in $\mathcal{A}$ is $\mathcal{A}$-nervous.\end{theorem}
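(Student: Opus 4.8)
The plan is to recognize the defining square of $\mathcal{A}$-nervousness for $M$ as an instance of the pullback criterion \cref{thm5.1}. Concretely, I would consider the commutative square
\[
\begin{tikzcd}
  \mathcal{E}^M \ar[r,"\Phi"] \ar[d,"R_1"swap] & \Prsh(\Th(M)) \ar[d,"R_2"] \\
\mathcal{E} \ar[r,"i"swap] & \Prsh(\mathcal{A}),
\end{tikzcd}
\]
where $R_1$ is the forgetful functor, $R_2$ is restriction along the canonical $j\colon \mathcal{A} \to \Th(M)$, $i$ is the $\mathcal{A}$-nerve, and $\Phi$ is the restricted Yoneda embedding $\mathcal{E}^M \to \Prsh(\Th(M))$ from \cref{cor:rest_yon_fct}. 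This square commutes because restricting $\Phi(Y)$ along $j$ recovers $a \mapsto \Map_{\mathcal{E}^M}(M(a),Y)\simeq\Map_{\mathcal{E}}(a,R_1 Y) = i(R_1 Y)(a)$. By definition $M$ is $\mathcal{A}$-nervous exactly when this square is a pullback, so it suffices to verify the three hypotheses of \cref{thm5.1}, with $i$ playing the role of $\Psi$.

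Two of the hypotheses are immediate: $i$ is fully faithful since $\mathcal{A}$ is dense, $R_1$ is monadic by construction of $\mathcal{E}^M$, and $R_2$ is monadic because $j$ is essentially surjective (exactly as argued in \cref{Prop:Monadic}). The entire content is therefore in the third hypothesis, namely that the mate $L_2 i \to \Phi L_1$ is invertible, where $L_1$ is the free-algebra functor and $L_2 = j_!$ is the left Kan extension left adjoint to $R_2$. This is the step I expect to be the main obstacle, and it is precisely where the arities hypothesis should enter; note in particular that $\Phi L_1$ is \emph{not} cocontinuous, so one cannot simply compare the two composites on representables.

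To check it, I would evaluate both composites on an object $X \in \mathcal{E}$, viewing the results as presheaves on $\Th(M)$, whose objects are exactly the free algebras $M(a)$ with $a \in \mathcal{A}$ and whose morphisms are Kleisli maps. On the one hand, the canonical colimit presentation of the presheaf $i(X)$ by representables has indexing category $\mathcal{A}_{/X}$, since $i(X)(a) = \Map_\mathcal{E}(a,X)$; as $L_2 = j_!$ is cocontinuous and sends $y_{\mathcal{A}}(a)$ to $y_{\Th(M)}(M(a))$, we obtain $L_2 i (X) \simeq \colim_{a\in\mathcal{A}_{/X}} y_{\Th(M)}(M(a))$, whose value at $M(a')$ is $\colim_{a\in\mathcal{A}_{/X}} \Map_\mathcal{E}(a', M(a)) = \bigl(\colim_{a\in\mathcal{A}_{/X}} i(M(a))\bigr)(a')$. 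On the other hand, the free-forgetful adjunction gives $\Phi L_1(X)(M(a')) \simeq \Map_{\mathcal{E}^M}(M(a'), L_1 X) \simeq \Map_\mathcal{E}(a', M(X)) = i(M(X))(a')$. Under these identifications the mate becomes, objectwise in $a'$, the canonical comparison $\colim_{a\in\mathcal{A}_{/X}} i(M(a)) \to i(M(X))$ induced by the cocone $\mathcal{A}_{/X} \ni (a\to X) \mapsto iM(a\to X)$, which is an equivalence for every $X$ precisely because $M$ has arities in $\mathcal{A}$ in the sense of \cref{def:monad_with_arities}, i.e. $i\circ M$ preserves $X \simeq \colim_{a\in\mathcal{A}_{/X}} a$.

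With the third hypothesis verified, \cref{thm5.1} applies and the square above is a pullback, which is exactly the statement that $M$ is $\mathcal{A}$-nervous. Beyond the bookkeeping of the three identifications, the one delicacy is to confirm that the map produced by the mate construction of \cref{thm5.1} genuinely coincides with the arities comparison map rather than some other natural map; I would settle this by tracking the unit of $L_1 \dashv R_1$ and the counit of $L_2 \dashv R_2$ through the computation above, after which everything is formal.
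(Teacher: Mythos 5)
Your proof is correct and follows essentially the same route as the paper: the paper likewise applies \cref{thm5.1} to the defining square of $\mathcal{A}$-nervousness, checks monadicity of the vertical functors and full faithfulness of the nerve via density, and identifies the mate $L_2 i \to \Phi L_1$ with the comparison map $\colim_{a \in \mathcal{A}_{/X}} M(a) \to M(X)$ in $\Prsh(\Th_\mathcal{A}(M))$, whose invertibility (after restriction to $\Prsh(\mathcal{A})$, detected there since $\mathcal{A} \to \Th_\mathcal{A}(M)$ is essentially surjective) is exactly the arities condition. Your objectwise evaluation at the $M(a')$ just makes explicit the computation the paper states in one line.
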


\begin{proof} For any monad $M \in \Mon_\mathcal{E}$ we have a commutative square of $\infty$-categories:

\[
\begin{tikzcd}
 \mathcal{E}^M \ar[d] \ar[r] & \Prsh(\Th_\mathcal{A}(M)) \ar[d] \\
 \mathcal{E}   \ar[r] & \Prsh(\mathcal{A}) 
\end{tikzcd}
\]

and $M$ is $\mathcal{A}$-nervous if and only if this square is a pullback. We conclude by applying \cref{thm5.1} to it. Both vertical functors are monadic right adjoint functors (for the right one, it was observed in the proof of \cref{Prop:Monadic}). The functor $\mathcal{E} \to \Prsh(\mathcal{A})$ is the restricted Yoneda embeddings and is fully faithful because $\mathcal{A}$ is dense in $\mathcal{E}$.  On the left hand side the left adjoint is the free algebra functor, and the right hand side it is the left Kan extention of the canonical functor $\mathcal{A} \to \Th_\mathcal{A}(M)$. The natural transformation ``$L_2 \Psi \to \Phi L_1$ '' in the notation of \cref{thm5.1} corresponds exactly to the map 

\[ \colim_{\mathcal{A}/X} M(a) \to M(X) \]

where the colimit is taken in $\Prsh(\Th_\mathcal{A}(M))$. This map is an equivalence if and only if its image in $\Prsh(\mathcal{A})$ is an equivalence and this corresponds exactly to the definition of a monad with arities in $\mathcal{A}$.
\end{proof}

\begin{definition}\label{def5.2}
Let $\lambda$ be a regular cardinal. We say that a monad on a $\lambda$-accessible $\infty$-category $C$ is $\lambda$\textit{-accessible} if its underlying functor is $\lambda$-accessible in the sense of \cite[5.4.2.5]{lurieHTT}. That is, if it preserves $\lambda$-directed colimits.
\end{definition}

\begin{lemma}\label{lem5.3} Let $T$ be a monad on an $\infty$-category $\mathcal{C}$ whose underlying functor commutes to colimits of $I$-shaped diagrams. Let $(C_i)_{i\in I}$ be an $I$-shaped diagram in $\mathcal{C}^T$, then:

\begin{itemize} 

\item A cocone for $C_i$ in $\mathcal{C}^T$ is a colimit cocone if and only if its image under the forgetful functor is a colimit cocone in $\mathcal{C}$.

\item If the image under the forgetful functor of $(C_i)$ admits a colimit in $\mathcal{C}$, then the colimit diagram can be lifted into a colimit diagram in $\mathcal{C}^T$.

\end{itemize} 

\end{lemma}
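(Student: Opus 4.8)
The plan is to show that the forgetful functor $U \colon \mathcal{C}^T \to \mathcal{C}$, with left adjoint the free functor $F$, \emph{creates} $I$-indexed colimits of any diagram whose underlying diagram admits a colimit in $\mathcal{C}$; both bullets are then formal consequences. Recall that $U$ is conservative and preserves limits (it is a monadic right adjoint), and that every $T$-algebra $A$ is canonically recovered as the geometric realization of its bar resolution $A \simeq \lvert [n] \mapsto F T^{n} U A \rvert$ by free algebras, a resolution which is natural in $A$ and becomes split after applying $U$.

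First I would prove the lift (the second bullet). Let $d \colon I \to \mathcal{C}^T$ be a diagram such that $U d$ admits a colimit $c$ in $\mathcal{C}$. Applying the bar resolution to each $d(i)$ and taking $\colim_{i}$ levelwise, and using that $F$ preserves all colimits while each $T^{n}$ preserves $I$-indexed colimits (as $T$ does), the simplicial object $[n] \mapsto \colim_{i} F T^{n} U d(i)$ is identified with $[n] \mapsto F T^{n} c$. This simplicial object in $\mathcal{C}^T$ becomes split after applying $U$, with realization $c$; since a monadic functor creates $U$-split geometric realizations, it admits a realization $\hat c \in \mathcal{C}^T$ with $U \hat c \simeq c$, equipped with a cocone from $d$ lying over the colimit cocone of $U d$. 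To see $\hat c \simeq \colim_{i} d(i)$, I would compute mapping spaces using the cobar description
\[
\Map_{\mathcal{C}^T}(X, B) \simeq \mathrm{Tot}_{[n] \in \Delta}\, \Map_{\mathcal{C}}\bigl(T^{n} U X,\, U B\bigr),
\]
dual to the bar resolution. Taking $X = \hat c$, so that $U X = c = \colim_{i} U d(i)$, and using that $T^{n}$ preserves this colimit, each term rewrites as $\lim_{i} \Map_{\mathcal{C}}(T^{n} U d(i), U B)$; swapping the totalization with the limit over $I$ then yields
\[
\Map_{\mathcal{C}^T}(\hat c, B) \simeq \lim_{i} \mathrm{Tot}_{[n]}\, \Map_{\mathcal{C}}(T^{n} U d(i), U B) \simeq \lim_{i} \Map_{\mathcal{C}^T}(d(i), B),
\]
which is exactly the universal property exhibiting $\hat c$ as the colimit of $d$, preserved by $U$.

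Granting this, the first bullet follows. For one direction, if a cocone $\tilde d$ on $d$ in $\mathcal{C}^T$ has image a colimit cocone, then $U d$ has a colimit and the construction above yields a colimit cocone over it with apex $\hat c$; the induced comparison map from $\hat c$ to the apex of $\tilde d$ becomes an equivalence after applying $U$, hence is an equivalence since $U$ is conservative, so $\tilde d$ is itself a colimit cocone. For the other direction, that a colimit cocone in $\mathcal{C}^T$ has colimit image is the assertion that $U$ preserves these colimits, which is the preservation half already contained in the mapping-space computation above.

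The main obstacle is the coherence of the bar resolution in the $\infty$-categorical setting: producing the $T$-algebra structure on $\colim_{i} U d(i)$ together with the cocone and all higher coherences, rather than the mere unit and associativity check of the $1$-categorical argument. For this I would invoke Lurie's bar-construction machinery for the canonical $U$-split resolution of an algebra; equivalently, one can adapt the proof of \cite[Corollary 4.2.3.5]{lurieHA}, noting that although its statement assumes every functor $C \otimes (\uvar)$ preserves the relevant colimits, its proof only ever applies powers of the algebra $T$, so the weaker hypothesis that $T$ preserves $I$-indexed colimits suffices here.
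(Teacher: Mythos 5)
Your proposal is correct in its essentials, but it takes a genuinely different route from the paper. The paper's proof is a three-line reduction: let $\End_I(\mathcal{C}) \subset \End(\mathcal{C})$ denote the full subcategory of endofunctors preserving $I$-shaped colimits; this is closed under composition, hence is a monoidal subcategory over which $\mathcal{C}$ is still tensored, and a monad preserving $I$-shaped colimits is exactly a monoid in it. Since \emph{every} object of $\End_I(\mathcal{C})$ preserves $I$-indexed colimits by definition, the hypothesis of \cite[Corollary 4.2.3.5]{lurieHA} holds verbatim for this smaller monoidal category, and the two bullets of the lemma are precisely its conclusion. In other words, where you propose to re-run Lurie's argument while observing that only the powers $T^n$ ever get tensored in, the paper shrinks the monoidal category so that this observation becomes unnecessary; your closing remark (that the proof of \cite[Corollary 4.2.3.5]{lurieHA} only ever applies powers of the algebra) is exactly the insight that the paper packages differently. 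What your route buys is a self-contained argument: the bar resolution of each $d(i)$, the identification $\colim_{i} F T^{n} U d(i) \simeq F T^{n} c$ (using that $F$ preserves colimits and $T^n$ preserves $I$-colimits), the creation of the realization of $\mathrm{Bar}(F,T,c)$ along a $U$-split simplicial object, and the cobar computation of mapping spaces are all sound, and they make visible exactly where the hypothesis on $T$ enters. What the paper's route buys is brevity and the complete avoidance of the coherence issues you rightly flag (naturality of the bar resolution in the $\infty$-categorical setting, and compatibility of your mapping-space equivalence with the constructed cocone -- the latter deserves at least a sentence, since the universal property requires the equivalence to be the one induced by the cocone).

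One caveat on scope: your treatment of the ``only if'' half of the first bullet (a colimit cocone in $\mathcal{C}^T$ has colimit image under $U$) silently assumes that $U d$ already admits a colimit in $\mathcal{C}$, since your entire construction starts from $c = \colim_{i} U d(i)$. If $\mathcal{C}$ is not assumed to admit $I$-indexed colimits, this direction is not covered by your argument as written (and in fact it is delicate in that generality: when colimits exist upstairs but not downstairs there is nothing to compare against). This does not distinguish you much from the paper, whose citation of \cite[Corollary 4.2.3.5]{lurieHA} carries the same fine print, and in all applications in the paper $\mathcal{E}$ is presentable so the issue never arises; but if you want the bullet in its stated generality you should either record the existence assumption or address that case separately.
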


\begin{proof}
Let $\End_I(\mathcal{C}) \subset \End(\mathcal{C})$ be the full subcategory of endofunctors preserving $I$-shaped colimits. As $\End_I(\mathcal{C})$ is stable under composition it is a monoidal subcategory of $End(\mathcal{C})$ in the sense of section 2.2.1 of \cite{lurieHA}, and hence it is itself a monoidal $\infty$-category. A monad preserving $I$-shaped colimits can be seen as a monoid object for this subcategory. As $\mathcal{C}$ is also tensored over $End_I(\mathcal{C})$, applying \cite[Corollary 4.2.3.5]{lurieHA} to $\mathcal{C}=End_I(\mathcal{C})$ immediately gives the result claimed.\end{proof}

\begin{theorem}\label{th:nervous_monad} Let $\mathcal{E}$ be a $\lambda$-presentable category and let $\mathcal{A}$ be the full subcategory of $\lambda$-presentable objects. Then for a monad $M \in \Mon_\mathcal{E}$ the following conditions are equivalent:

\begin{enumerate}
\item\label{th:nervous_monad:accessible} $M$ is $\lambda$-accessible.
\item\label{th:nervous_monad:arities} $M$ has arities in $\mathcal{A}$.
\item\label{th:nervous_monad:nervous} $M$ is $\mathcal{A}$-nervous.
\end{enumerate}

\end{theorem}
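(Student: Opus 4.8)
The plan is to prove the cycle $(1) \Rightarrow (2) \Rightarrow (3) \Rightarrow (1)$; the middle implication $(2) \Rightarrow (3)$ is exactly \cref{th:arities_imply_nervous}, which applies since $\mathcal{A}$, being the full subcategory of $\lambda$-presentable objects of the locally $\lambda$-presentable category $\mathcal{E}$, is dense and small. Two preservation facts will be used repeatedly (recall that $\lambda$-directed and $\lambda$-filtered colimits agree). First, the restricted Yoneda embedding $i : \mathcal{E} \to \Prsh(\mathcal{A})$ preserves $\lambda$-filtered colimits: colimits in $\Prsh(\mathcal{A})$ are computed objectwise, and for each $a \in \mathcal{A}$ the functor $\Map_{\mathcal{E}}(a,\uvar)$ commutes with $\lambda$-filtered colimits since $a$ is $\lambda$-presentable. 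Second, for any pretheory the restriction functor $\Prsh(\Th(M)) \to \Prsh(\mathcal{A})$ preserves all colimits, being precomposition along $\mathcal{A} \to \Th(M)$.

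For $(1) \Rightarrow (2)$: since $\mathcal{A}$ is the full subcategory of $\lambda$-presentable objects, the slice $\mathcal{A}_{/X}$ is $\lambda$-filtered and the canonical diagram $\mathcal{A}_{/X} \to \mathcal{E}$ has colimit $X$ (this is the defining property of $\lambda$-presentability; see \cite[Proposition 5.3.5.15]{lurieHTT}). If $M$ is $\lambda$-accessible it preserves this $\lambda$-filtered colimit, and by the first preservation fact so does $i$; hence $i \circ M$ preserves the canonical colimit of $X$. As $X$ is arbitrary, $M$ has arities in $\mathcal{A}$.

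For $(3) \Rightarrow (1)$: I would first show that the forgetful functor $U : \mathcal{E}^M \to \mathcal{E}$ preserves $\lambda$-filtered colimits. When $M$ is $\mathcal{A}$-nervous, $\mathcal{E}^M$ is realized, compatibly with $U$, as the pullback of $\Prsh(\Th(M)) \to \Prsh(\mathcal{A}) \leftarrow \mathcal{E}$. Given a $\lambda$-filtered diagram in $\mathcal{E}^M$, form the objectwise colimits in $\mathcal{E}$ and in $\Prsh(\Th(M))$; by the two preservation facts their images in $\Prsh(\mathcal{A})$ agree, so they assemble into an object of the pullback, and \cref{colimitinternal} identifies it as the colimit in $\mathcal{E}^M$ (its image is a colimit in each of the three other corners). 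Thus $U$ preserves $\lambda$-filtered colimits. Since the free functor $F$ preserves all colimits and $M = U \circ F$, we get $M(\colim_j X_j) = U F(\colim_j X_j) = U(\colim_j F X_j) = \colim_j U F X_j = \colim_j M(X_j)$ for every $\lambda$-filtered diagram $(X_j)$, which is $(1)$.

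The main obstacle is the $(3) \Rightarrow (1)$ step, and within it the claim that $\lambda$-filtered colimits in the pullback $\mathcal{E}^M$ are computed objectwise. This is precisely where \cref{colimitinternal} combines with the two preservation facts, and where the hypothesis that $\mathcal{A}$ consists of $\lambda$-presentable objects is indispensable (it is what makes $i$ preserve $\lambda$-filtered colimits). The remaining implications are routine once the $\lambda$-filteredness of $\mathcal{A}_{/X}$ and the preservation facts are established.
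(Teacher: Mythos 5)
Your proposal is correct and takes essentially the same route as the paper's proof: the same cycle $(1) \Rightarrow (2) \Rightarrow (3) \Rightarrow (1)$, with $(2) \Rightarrow (3)$ delegated to \cref{th:arities_imply_nervous}, $(1) \Rightarrow (2)$ obtained from $\lambda$-accessibility of $M$ together with the facts that the restricted Yoneda embedding preserves $\lambda$-filtered colimits and that $\mathcal{A}_{/X}$ is $\lambda$-filtered, and $(3) \Rightarrow (1)$ deduced from the defining pullback square via \cref{colimitinternal} and the factorization $M = U \circ F$. The only difference is one of exposition: you spell out how the colimit cocone is assembled in the pullback before invoking \cref{colimitinternal}, where the paper's proof states this more tersely.
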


\begin{proof}
$\ref{th:nervous_monad:accessible} \Rightarrow \ref{th:nervous_monad:arities}$ :  If $M$ is $\lambda$-accessible then $M$ preserves all $\lambda$-directed colimits. Because all objects in $\mathcal{A}$ are $\lambda$-compact, the restricted Yoneda embedding $\mathcal{E} \to \Prsh(\mathcal{A})$ preserves $\lambda$-directed colimits. Since for each $X \in \mathcal{E}$ the category $X_{/\mathcal{A}}$ is $\lambda$-directed (it has $\lambda$-small colimits) this concludes the proof.

$\ref{th:nervous_monad:arities} \Rightarrow \ref{th:nervous_monad:nervous}$ is \cref{th:arities_imply_nervous}.

$\ref{th:nervous_monad:nervous} \Rightarrow \ref{th:nervous_monad:accessible}$ : $M$ being $\mathcal{A}$-nervous means that the square:

\[
\begin{tikzcd}
  \mathcal{E}^M \ar[r] \ar[d] & \Prsh(\Th_\mathcal{A}(M)) \ar[d] \\
\mathcal{E} \ar[r] & \Prsh(\mathcal{A}) 
\end{tikzcd}
\]

is a pullback square. Now the right vertical functor preserves all colimits (in particular, $\lambda$-directed ones), and the bottom horizontal functor preserves $\lambda$-directed colimits as mentioned above. It hence follows that all functors in the diagram preserve $\lambda$-directed colimits by \ref{colimitinternal}. The underlying functor of the monad $M$ identifies with the composite of the forgetful functor $\mathcal{E}^M \to \mathcal{E}$ and its left adjoint (which automatically preserves colimits), so it preserves $\lambda$-directed colimits. Thus, $M$ is $\lambda$-accessible.
\end{proof}






\begin{corollary}\label{cor5.5}
Let $M$ be a $\lambda$-accessible monad on a $\lambda$-presentable $\infty$-category $\mathcal{E}$. Then the  $\infty$-category $\mathcal{E}^M$ of $M$-algebra is locally presentable. In particular it has all colimits. 
\end{corollary}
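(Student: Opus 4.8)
The plan is to identify $\mathcal{E}^M$ with a category of models $\Mod_\mathcal{E}(\mathcal{K})$ and then to observe that every such category of models is locally presentable, being a pullback of presentable $\infty$-categories along accessible right adjoint functors.

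First I would take $\mathcal{A} \subset \mathcal{E}$ to be the full subcategory of $\lambda$-presentable objects; it is essentially small and dense in $\mathcal{E}$, so the monad-theory machinery of \cref{sect:monads-theories} applies. Since $M$ is $\lambda$-accessible, the equivalence of conditions \ref{th:nervous_monad:accessible} and \ref{th:nervous_monad:nervous} in \cref{th:nervous_monad} shows that $M$ is $\mathcal{A}$-nervous. By definition this means that the square
\[
\begin{tikzcd}
\mathcal{E}^M \ar[r] \ar[d] & \Prsh(\Th_\mathcal{A}(M)) \ar[d] \\
\mathcal{E} \ar[r] & \Prsh(\mathcal{A})
\end{tikzcd}
\]
is a pullback, so that $\mathcal{E}^M \simeq \Mod_\mathcal{E}(\Th_\mathcal{A}(M))$ in the notation of \cref{def:Model}, with $\mathcal{K} = \Th_\mathcal{A}(M)$ a small $\infty$-category.

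It then remains to check that $\Mod_\mathcal{E}(\mathcal{K})$ is locally presentable for any $\mathcal{A}$-pretheory $\mathcal{K}$. This was essentially already observed in the proof of \cref{Prop:Monadic}: by \cite[Theorem 5.5.3.18]{lurieHTT} the defining square of \cref{def:Model} may be regarded as a pullback in the $\infty$-category of presentable $\infty$-categories and accessible right adjoint functors, since both $\Prsh(\mathcal{K}) \to \Prsh(\mathcal{A})$ (restriction along $\mathcal{A} \to \mathcal{K}$) and the restricted Yoneda embedding $\mathcal{E} \to \Prsh(\mathcal{A})$ (a right adjoint because $\mathcal{A}$ is dense) are accessible right adjoints between presentable $\infty$-categories. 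As limits in that $\infty$-category are computed by presentable $\infty$-categories, the vertex $\Mod_\mathcal{E}(\mathcal{K})$ is itself presentable. Combining this with the previous paragraph yields that $\mathcal{E}^M \simeq \Mod_\mathcal{E}(\Th_\mathcal{A}(M))$ is locally presentable, and in particular cocomplete.

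I do not anticipate any serious obstacle here, as every ingredient is already in place: the reduction to models is exactly the nervousness of $M$ supplied by \cref{th:nervous_monad}, and the presentability of $\Mod_\mathcal{E}(\mathcal{K})$ reuses the pullback-in-presentable-categories argument of \cref{Prop:Monadic}. The only point requiring a little care is confirming that the two functors into $\Prsh(\mathcal{A})$ are accessible right adjoints, so that the pullback is genuinely computed in the $\infty$-category of presentable $\infty$-categories — but this is precisely what was verified in \cref{Prop:Monadic}.
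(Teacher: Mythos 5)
Your proof is correct and takes essentially the same route as the paper: both identify $\mathcal{E}^M$ with the pullback $\Mod_\mathcal{E}(\Th_\mathcal{A}(M))$ via \cref{th:nervous_monad} (for $\mathcal{A}$ the $\lambda$-presentable objects) and then apply \cite[Theorem 5.5.3.18]{lurieHTT} after checking that the two functors into $\Prsh(\mathcal{A})$ are accessible right adjoints between presentable $\infty$-categories. The only cosmetic difference is that you package the second step as a general presentability statement for model categories of pretheories, citing the verification from \cref{Prop:Monadic}, whereas the paper redoes that accessibility check inline.
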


\begin{proof}
With $\mathcal{A} $ the full subcategory of $\lambda$-presentable objects, we have by \cref{th:nervous_monad} pullback diagram:
\[
\begin{tikzcd}
  \mathcal{E}^M \ar[r] \ar[d] & \Prsh(\Th_\mathcal{A}(M)) \ar[d] \\
\mathcal{E} \ar[r] & \Prsh(\mathcal{A}) 
\end{tikzcd}
\]

$\Prsh(M^{\lambda}), \Prsh(C^{\lambda})$ are locally presentable by \cite[Theorem 5.5.1.1]{lurieHTT}. The vertical right map preserve all limits and all colimits so it is an accessible right adjoint functor and the bottom horizontal map preserves all limits and $\lambda$-directed colimits, so it is also an accessible right adjoint. It then follows from \cite[Theorem 5.5.3.18]{lurieHTT} that taking this pullback in the category of presentable categories and right adjoint functors between them gives the same results, and hence $\mathcal{E}^M$ is itself locally presentable.
\end{proof}

\begin{corollary}\label{cor:colimits_of_monads} Let $\mathcal{E}$ be a locally presentable category and $M:I \to \Mon_\mathcal{E}$ a diagram such that $M(i)$ is accessible for each $i \in I$, then $M$ has a colimit in $\Mon_\mathcal{E}$ and the natural map:

\[\mathcal{E}^{\colim M_i} \to \lim_{i \in I} \mathcal{E}^{M_i} \]

is an equivalence of $\infty$-categories.

\end{corollary}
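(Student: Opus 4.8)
The plan is to reduce the statement to \cref{th:colim_of_} by producing a single regular cardinal $\lambda$ for which \emph{all} the monads $M(i)$ are simultaneously $\mathcal{A}$-nervous, where $\mathcal{A} = \mathcal{E}^\lambda$ denotes the full subcategory of $\lambda$-presentable objects of $\mathcal{E}$. The whole argument is then a matter of bookkeeping with accessible cardinals followed by a direct appeal to the two structural theorems already proven.

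First I would fix the cardinal. For each $i \in I$, accessibility of $M(i)$ gives a regular cardinal $\lambda_i$ such that $M(i)$ is $\lambda_i$-accessible. Since $I$ is small, I can choose a single regular cardinal $\lambda$ with $\lambda \geqslant \lambda_i$ for all $i$ and large enough that $\mathcal{E}$ is locally $\lambda$-presentable (a locally presentable $\infty$-category is locally $\lambda$-presentable for all sufficiently large $\lambda$). The key point is that preservation of $\lambda_i$-filtered colimits upgrades to preservation of $\lambda$-filtered colimits as soon as $\lambda \geqslant \lambda_i$, because every $\lambda$-filtered diagram is in particular $\lambda_i$-filtered; hence each $M(i)$ is $\lambda$-accessible for this common $\lambda$.

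Next I would invoke \cref{th:nervous_monad}. Taking $\mathcal{A} \subset \mathcal{E}$ to be the small, full, dense subcategory of $\lambda$-presentable objects, the equivalence \ref{th:nervous_monad:accessible} $\Leftrightarrow$ \ref{th:nervous_monad:nervous} shows that each $M(i)$, being $\lambda$-accessible, is $\mathcal{A}$-nervous. Consequently the diagram $M : I \to \Mon_\mathcal{E}$ factors (on objects, and hence as a diagram, since the nervous monads form a full subcategory) through the full subcategory of $\mathcal{A}$-nervous monads. At this point \cref{th:colim_of_} applies verbatim: that subcategory has all colimits, they are preserved by the inclusion into $\Mon_\mathcal{E}$, so $\colim_i M(i)$ exists in $\Mon_\mathcal{E}$, and the same theorem yields the equivalence $\mathcal{E}^{\colim_i M_i} \simeq \lim_{i\in I} \mathcal{E}^{M_i}$ since the algebra functor carries these colimits of monads to the corresponding limits of algebra categories.

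I expect the only genuine work to be in the first step: verifying that a single $\lambda$ can be chosen that simultaneously dominates all the $\lambda_i$ \emph{and} makes $\mathcal{E}$ locally $\lambda$-presentable, together with the upgrade of $\lambda_i$-accessibility to $\lambda$-accessibility. This is standard accessibility bookkeeping, but it is where the smallness of $I$ and the interaction between the various cardinals must be handled carefully; once a uniform $\lambda$ is in hand, the remainder is a formal consequence of \cref{th:nervous_monad} and \cref{th:colim_of_}.
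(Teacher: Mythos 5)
Your proposal is correct and follows essentially the same route as the paper: choose a single regular cardinal $\kappa$ dominating the accessibility ranks of all the $M(i)$ and the presentability rank of $\mathcal{E}$, invoke \cref{th:nervous_monad} to see that every $M(i)$ is $\mathcal{A}$-nervous for $\mathcal{A}$ the $\kappa$-presentable objects, and conclude by \cref{th:colim_of_}. The only difference is that you spell out the cardinal bookkeeping that the paper leaves implicit (and you cite \cref{th:colim_of_} where the paper's text points, apparently by a referencing slip, to \cref{thm5.1}); your version is the intended argument.
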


More precisely, the proof will show that if $\mathcal{E}$ is $\kappa$-presentable and all $M(i)$ are $\kappa$-accessible then the colimit is $\kappa$-accessible.

\begin{proof} Given $\kappa$ a regular cardinal such that $\mathcal{E}$ is $\kappa$-presentable and all $M(i)$ are $\kappa$-accessible, \cref{th:nervous_monad} shows that all $M(i)$ are $\mathcal{A}$-nervous for $\mathcal{A}$ the category of $\kappa$-compact objects in $\mathcal{A}$, and \cref{thm5.1} implies the result. \end{proof}

\section{Monads as Kleisli categories}
\label{sec:Monads_as_Kleisli}

The goal of this section is to show that one can works with a monad purely in terms of its Kleisli category, so that defining a monad on $\mathcal{C}$ is the same as defining a bijective on objects left adjoint functor $\mathcal{C} \to \mathcal{K}$. This section is generally independant of the rest of the paper, but uses very similar methods and fits in the general goal of providing tools to work more easily with monads on $\infty$-categories.

\begin{definition} Let $\Ladj_{\mathcal{C}} $ be the full subcategory of $(\icat)_{\mathcal{C}/}$ on \emph{left adjoint essentially surjective functors}. \end{definition}

Let $\Kl: \Mon_{\mathcal{C}} \rightarrow \Ladj_{\mathcal{C}}$ be the Kleisli category construction. The main result of this section is:

\begin{theorem}\label{th:Mon_as_Kleisli_main}
The functor $\Kl$ is an equivalence of $\infty$-categories between the $\infty$-categories $\Mon_\mathcal{C}$ and $\Ladj_\mathcal{C}$.
\end{theorem}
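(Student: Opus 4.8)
The plan is to prove that $\Kl$ is fully faithful and essentially surjective. First I would record that $\Kl$ genuinely lands in $\Ladj_{\mathcal{C}}$: by \cref{cor:functoriality_of_Kleisli} and the remark following it, $\Kl(T)$ is the free-algebra functor $F_T \colon \mathcal{C} \to \mathcal{C}_T$, which is essentially surjective by the very definition of $\mathcal{C}_T$, and which is left adjoint to the restriction $U_T \colon \mathcal{C}_T \to \mathcal{C}$ of the forgetful functor (the adjunction equivalence $\Map_{\mathcal{C}^T}(F_T X, A) \simeq \Map_{\mathcal{C}}(X, U_T A)$ restricts to the full subcategory $\mathcal{C}_T$). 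Thus $F_T$ is an essentially surjective left adjoint and the statement makes sense.

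\textbf{Essential surjectivity.} Given $L \colon \mathcal{C} \to \mathcal{K}$ in $\Ladj_{\mathcal{C}}$ with right adjoint $R$, set $T := \EndO(R)$; since $R$ has the left adjoint $L$, its endomorphism monad is $T \simeq R L$. The unit of the adjunction of \cref{prop:Monad_forget_adjunction} is a comparison functor $K \colon \mathcal{K} \to \mathcal{C}^{T}$ over $\mathcal{C}$ which carries $L$ to the free functor $F_T$. On objects in the image of $L$ one computes, using $L \dashv R$ and $F_T \dashv U_T$, that $\Map_{\mathcal{K}}(Lc, Lc') \simeq \Map_{\mathcal{C}}(c, R L c') \simeq \Map_{\mathcal{C}}(c, T c') \simeq \Map_{\mathcal{C}^T}(F_T c, F_T c')$, and this equivalence is the one induced by $K$. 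Since $L$ is essentially surjective every object of $\mathcal{K}$ is of the form $Lc$, so $K$ is fully faithful with essential image exactly the free $T$-algebras $\mathcal{C}_T$. Hence $K$ exhibits an equivalence $\mathcal{K} \simeq \mathcal{C}_T = \Kl(T)$ in $(\icat)_{\mathcal{C}/}$, which gives essential surjectivity.

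\textbf{Full faithfulness.} Fix monads $T, M$. Through \cref{th:monads=monadic} the source $\Map_{\Mon_{\mathcal{C}}}(T, M)$ is identified with $\Map_{(\icat)_{/\mathcal{C}}}(\mathcal{C}^M, \mathcal{C}^T)$, i.e.\ with functors $f^{*} \colon \mathcal{C}^M \to \mathcal{C}^T$ over $\mathcal{C}$, while the target $\Map_{\Ladj_{\mathcal{C}}}(\mathcal{C}_T, \mathcal{C}_M)$ consists of functors $h \colon \mathcal{C}_T \to \mathcal{C}_M$ under $\mathcal{C}$. Under these identifications the map induced by $\Kl$ sends a monad morphism $f$ to the partial left adjoint $f_! \colon \mathcal{C}_T \to \mathcal{C}_M$ of $f^{*}$ on free algebras, which exists by \cref{prop:f!_exists} and satisfies $f_! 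F_T \simeq F_M$. To see this is an equivalence I would first produce a retraction: the restricted Yoneda embeddings $\mathcal{C}^T \hookrightarrow \Prsh(\mathcal{C}_T)$ are fully faithful (free algebras are dense in $\mathcal{C}^T$) and, by \cref{cor:rest_yon_fct}, natural in the monad, so for a monad morphism $f$ the naturality square identifies $f^{*}$ with the restriction of $(f_!)^{*} \colon \Prsh(\mathcal{C}_M) \to \Prsh(\mathcal{C}_T)$ to the reflective subcategories $\mathcal{C}^M$ and $\mathcal{C}^T$. This recovers $f$ from $f_!$, showing the comparison map is (split) injective. It then remains to show the reconstruction is also a section, i.e.\ that every $h$ under $\mathcal{C}$ arises as some $f_!$; for this I would invoke the $\infty$-categorical universal property of the Kleisli category, extracting the underlying transformation $T \to M$ by evaluating $h$ on the universal Kleisli arrows via $\Map_{\mathcal{C}_T}(F_T c, F_T c') \simeq \Map_{\mathcal{C}}(c, T c')$ (the Yoneda lemma) and reading the monad-morphism coherences off the functoriality of $h$.

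\textbf{Main obstacle.} The crux is exactly this last point: promoting the levelwise reconstruction of a natural transformation of underlying endofunctors $T \to M$ to a homotopy-coherent morphism of monoids in $\EndO(\mathcal{C})$ and verifying it is inverse to $f \mapsto f_!$; equivalently, establishing the universal property that functors $\mathcal{C}_T \to \mathcal{C}_M$ under $\mathcal{C}$ are the same as monad morphisms $T \to M$. A clean way to organize the whole argument, which I would pursue in parallel, is to package the two halves as an adjoint equivalence $\Kl \dashv E$, where $E \colon \Ladj_{\mathcal{C}} \to \Mon_{\mathcal{C}}$ sends $(L \dashv R)$ to the endomorphism monad $\EndO(R) \simeq R L$; the unit $T \to E \Kl(T)$ is an equivalence because the monad of the Kleisli adjunction is again $T$, and the counit $\Kl E(\mathcal{K}) \to \mathcal{K}$ is an equivalence by the essential-surjectivity argument above, so \cref{lem1.8} and \cref{lem1.7} would conclude. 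In either framing the delicate ingredient is the coherent functoriality that makes $E$ well defined on morphisms (the passage to adjoints and mates) together with the naturality of these equivalences.
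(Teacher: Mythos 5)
Your overall strategy (fully faithful plus essentially surjective, equivalently an adjoint equivalence with an explicit inverse) has the right shape, and your essential-surjectivity argument is morally sound, though even there you lean on unverified identifications (that the comparison functor $K$ satisfies $K L \simeq F_T$ coherently, and that the chain of abstract mapping-space equivalences is the one induced by $K$). The genuine gap is the one you flag yourself: full faithfulness, i.e.\ showing that every functor $h \colon \mathcal{C}_T \to \mathcal{C}_M$ under $\mathcal{C}$ arises, naturally, from a morphism of monads. Your proposed repair --- extract a transformation $T \to M$ by evaluating $h$ on universal Kleisli arrows and then ``read the monad-morphism coherences off the functoriality of $h$'' --- is not a viable argument in the $\infty$-categorical setting: a morphism in $\Mon_{\mathcal{C}} = \Monoid(\End(\mathcal{C}))$ is a homotopy-coherent datum with infinitely many compatibilities, and there is no mechanism for checking these ``levelwise'' as one checks the two monad-morphism axioms in ordinary category theory. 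The same issue afflicts your fallback packaging: making $E \colon (L \dashv R) \mapsto \EndO(R)$ into a functor requires a coherent passage to adjoints and mates, which is exactly what cannot be built by hand. Note also that your retraction step quietly uses two facts that themselves require proof here: that $\mathcal{C}^T \to \Prsh(\mathcal{C}_T)$ is fully faithful (this is part of \cref{prop:EM_pb_of_Kl}, proved via \cref{thm5.1}), and that for an arbitrary $h$, not yet known to be of the form $f_!$, the restriction $h^*$ carries $\mathcal{C}^M$ into $\mathcal{C}^T$.

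The paper closes precisely this gap by never constructing a monad morphism directly. It defines the inverse $\Omega$ on objects by sending $F \colon \mathcal{C} \to \mathcal{K}$ to the pullback $\mathcal{C} \times_{\Prsh(\mathcal{C})} \Prsh(\mathcal{K}) \to \mathcal{C}$, which is a monadic right adjoint by \cref{lem:Kl_pb_monadicity} (no presentability is needed because $F$ has a right adjoint); the functoriality of $\Omega$ then comes for free from the contravariant functoriality of $\Prsh(\uvar)$ and of pullbacks (\cref{cstr:Omega_functor}), and monad morphisms are only ever produced through the full embedding of \cref{th:monads=monadic}. The two natural equivalences $\Omega \circ \Kl \simeq \mathrm{id}$ and $\Kl \circ \Omega \simeq \mathrm{id}$ are then assembled from \cref{prop:EM_pb_of_Kl} together with the naturality statements \cref{cor:rest_yon_fct} and \cref{prop:partial_adj_natural}, which in turn rest on the naturality of the Yoneda embedding from \cite{hebestreit2020orthofibrations}. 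If you wish to complete your own outline, the missing ``section'' in your full-faithfulness step can be produced this way: for $h$ under $\mathcal{C}$, the pullback square of \cref{prop:EM_pb_of_Kl} shows $h^*$ restricts to a functor $\mathcal{C}^M \to \mathcal{C}^T$ over $\mathcal{C}$, hence yields $f \colon T \to M$ by \cref{th:monads=monadic}, and $h$ satisfies the universal property characterizing $f_!$; but upgrading this from a bijection on homotopy classes to an equivalence of mapping spaces, naturally in $T$ and $M$, is exactly what the machinery of \cref{prop:Partial_adjoint_fct} and \cref{prop:partial_adj_natural} was built for.
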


As well, the following proposition allows us to recover the $\infty$-category of algebras of a monad out of its Kleisli categories.

\begin{proposition}\label{prop:EM_pb_of_Kl}
Let $\mathcal{C}^{M} \rightarrow \mathcal{C}$ be a monadic functor
The square
\[
\xymatrix
{
\mathcal{C}^{M} \ar[r] \ar[d] & \Pr(\mathcal{C}_{M})  \ar[d] \\
\mathcal{C} \ar[r] & \Pr(\mathcal{C})
 }
\]
where the horizontal arrows are the restricted Yoneda embeddings is a pullback.
\end{proposition}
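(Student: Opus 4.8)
The plan is to realize this square as an instance of the pullback criterion \cref{thm5.1}, whose hypotheses require no presentability assumption. Write $j : \mathcal{C} \to \mathcal{C}_M$ for the free-algebra functor, which is essentially surjective by the definition of the Kleisli category, and factor the free functor $L_1 : \mathcal{C} \to \mathcal{C}^M$ as $L_1 = \iota \circ j$, where $\iota : \mathcal{C}_M \hookrightarrow \mathcal{C}^M$ is the fully faithful inclusion. In the notation of \cref{thm5.1} I take $R_1$ to be the forgetful functor $\mathcal{C}^M \to \mathcal{C}$, $\Psi$ the bottom horizontal map, which is the Yoneda embedding $y_{\mathcal{C}}$ (the restricted Yoneda relative to all of $\mathcal{C}$, hence the full Yoneda), $R_2$ the right vertical map, which is restriction $j^{*} : \Prsh(\mathcal{C}_M) \to \Prsh(\mathcal{C})$ along $j$, and $\Phi$ the top restricted Yoneda embedding. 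The square commutes: for an algebra $Y$ one has $j^{*}\Phi(Y) = \Map_{\mathcal{C}^M}(\iota j(-), Y) \simeq \Map_{\mathcal{C}}(-, R_1 Y) = \Psi R_1(Y)$ by the free-forgetful adjunction.

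First I would check the three hypotheses. The functor $\Psi = y_{\mathcal{C}}$ is fully faithful, and $R_1$ is monadic by assumption. For $R_2 = j^{*}$, the argument of \cref{Prop:Monadic} applies verbatim: restriction along the essentially surjective $j$ is conservative, it preserves all colimits (having both adjoints $j_{!} \dashv j^{*} \dashv j_{*}$), and $\Prsh(\mathcal{C})$ is cocomplete, so by Lurie's Barr--Beck theorem $j^{*}$ is a monadic right adjoint with left adjoint $L_2 = j_{!}$.

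The crux is to identify the comparison transformation $L_2 \Psi \to \Phi L_1$ and show it is invertible. On one side, $L_2 \Psi = j_{!}\, y_{\mathcal{C}} \simeq y_{\mathcal{C}_M}\, j$, since left Kan extension along $j$ carries representables to representables. On the other, for $c \in \mathcal{C}$ the fullness of $\iota$ gives $\Phi L_1(c) = \Map_{\mathcal{C}^M}(\iota(-), \iota j c) \simeq \Map_{\mathcal{C}_M}(-, jc) = y_{\mathcal{C}_M}(jc)$. Thus both composites are canonically $y_{\mathcal{C}_M} \circ j$. It remains to confirm that the mate supplied by \cref{thm5.1} agrees with this identification: since its source $y_{\mathcal{C}_M}(jc)$ is corepresentable, the Yoneda lemma classifies it by a point of $(\Phi L_1(c))(jc) \simeq \Map_{\mathcal{C}^M}(L_1 c, L_1 c)$, and unwinding the mate (the unit of $L_1 \dashv R_1$ together with the counit of $L_2 \dashv R_2$) identifies this point with $\mathrm{id}_{L_1 c}$, so the mate is an equivalence. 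With all three hypotheses verified, \cref{thm5.1} shows the square is a pullback. This last step --- verifying that the abstract mate really is the canonical identification of the two representables, rather than merely that both sides happen to be equivalent to $y_{\mathcal{C}_M} j$ --- is the main, if routine, obstacle. In the presentable setting this is the same equivalence that appears as $\colim_{\mathcal{C}/X} M(a) \to M(X)$ in \cref{th:arities_imply_nervous}, here trivially invertible because $\mathrm{id}_X$ is terminal in $\mathcal{C}_{/X}$.
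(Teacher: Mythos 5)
Your proof is correct and follows essentially the same route as the paper: both reduce the statement to the pullback criterion \cref{thm5.1} with exactly the same assignment of functors (monadic verticals, Yoneda embedding as the fully faithful bottom map), and both hinge on the invertibility of the mate $L_2\Psi \to \Phi L_1$. The only difference is presentational: the paper discharges the mate verification by citing the computation already carried out in the proof of \cref{yonedanat} (when checking the hypotheses of \cref{prop:partial_adj_natural}), whereas you re-derive it self-containedly by identifying both composites with $y_{\mathcal{C}_M}\circ j$ and pinning down the mate via the Yoneda lemma and the unit of the free--forgetful adjunction — a valid substitute for the citation.
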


\begin{proof}
In the diagram, the vertical maps are monadic, and the bottom horizontal map is fully faithful. 
By \ref{thm5.1}, we must show that the adjoint natural transformation (``$L_2 \Psi \to \Phi L_1$ '' in the notation of \ref{thm5.1}) is an equivalence. But this was done within the proof of \cref{yonedanat}, when checking that \cref{prop:partial_adj_natural} can be applied. 
\end{proof}

A key observation is that the pullback of \cref{prop:EM_pb_of_Kl} allows us to associate a monad on $\mathcal{C}$ to every essentially surjective left adjoint functor $L:\mathcal{C} \to \mathcal{K}$.

\begin{lemma}\label{lem:Kl_pb_monadicity} Let $F: \mathcal{C} \to \mathcal{K}$ be an essentially surjective left adjoint functor, then, in the pullback square:

\[ \begin{tikzcd}
\mathcal{M} \ar[r] \ar[d] \ar[dr,phantom,"\lrcorner"very near start] & \Pr(\mathcal{K}) \ar[d] \\
\mathcal{C} \ar[r] & \Pr(\mathcal{C})
\end{tikzcd} \]

The functor $\mathcal{M} \to \mathcal{C}$ is a monadic right adjoint.
\end{lemma}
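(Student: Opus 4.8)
The plan is to reduce the statement to \cref{prop:Pullback_monadic}. I will write the right-hand vertical map of the square as $F^{*} : \Pr(\mathcal{K}) \to \Pr(\mathcal{C})$, restriction of presheaves along $F$, and the bottom map as the Yoneda embedding $y_{\mathcal{C}}$. To apply that proposition I must verify two things: that $F^{*}$ is a monadic right adjoint, and that the left-hand projection $V : \mathcal{M} \to \mathcal{C}$ is a right adjoint. The monadicity of $F^{*}$ is the routine half and proceeds exactly as in the proof of \cref{Prop:Monadic}: colimits and limits of presheaves are computed pointwise, so $F^{*}$ preserves them and is in particular a right adjoint (to left Kan extension $F_{!}$), while it is conservative because $F$ is essentially surjective; Lurie's Barr-Beck theorem then gives monadicity. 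Note that this step uses only essential surjectivity of $F$, not that $F$ is a left adjoint.

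The right-adjointness of $V$ is the step I expect to be the main obstacle. In \cref{Prop:Monadic} this came for free from local presentability of $\mathcal{E}$ via \cite[Theorem 5.5.3.18]{lurieHTT}, but here $\mathcal{C}$ is an arbitrary $\infty$-category, so that argument is unavailable, and this is precisely where the hypothesis that $F$ is a left adjoint must be used. Writing $G$ for a right adjoint of $F$, I would first observe that $\mathcal{M} \to \Pr(\mathcal{K})$ is fully faithful, being the base change of the fully faithful functor $y_{\mathcal{C}}$ along $F^{*}$, with essential image the presheaves $Q$ on $\mathcal{K}$ such that $F^{*} Q$ is representable. The key computation is the adjunction identity $F^{*} y_{\mathcal{K}}(Fc) \simeq y_{\mathcal{C}}(GFc)$, which shows that each $y_{\mathcal{K}}(Fc)$ lies in this essential image. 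I would therefore define the candidate left adjoint $L : \mathcal{C} \to \mathcal{M}$ as the factorisation through $\mathcal{M} \hookrightarrow \Pr(\mathcal{K})$ of the composite $c \mapsto y_{\mathcal{K}}(Fc)$; its image under $V$ is $GFc$.

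Finally I would verify $L \dashv V$ by a short Yoneda computation. If $m \in \mathcal{M}$ corresponds to $Q \in \Pr(\mathcal{K})$ with $F^{*} Q \simeq y_{\mathcal{C}}(Vm)$, then using full faithfulness of $\mathcal{M} \hookrightarrow \Pr(\mathcal{K})$ followed by the Yoneda lemma,
\[ \Map_{\mathcal{M}}(Lc, m) \simeq \Map_{\Pr(\mathcal{K})}(y_{\mathcal{K}}(Fc), Q) \simeq Q(Fc), \]
while, since $y_{\mathcal{C}}$ is fully faithful,
\[ \Map_{\mathcal{C}}(c, Vm) \simeq \Map_{\Pr(\mathcal{C})}(y_{\mathcal{C}}(c), F^{*}Q) \simeq (F^{*}Q)(c) = Q(Fc). \]
Matching these equivalences naturally in $m$ exhibits $L$ as a left adjoint of $V$, so $V$ is a right adjoint. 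With both hypotheses of \cref{prop:Pullback_monadic} in hand (that $F^{*}$ is monadic and that $V$ is a right adjoint), I conclude that $V : \mathcal{M} \to \mathcal{C}$ is monadic.
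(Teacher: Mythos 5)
Your proposal is correct and follows essentially the same route as the paper: it reduces to \cref{prop:Pullback_monadic} by showing the restriction functor $F^{*}:\Prsh(\mathcal{K})\to\Prsh(\mathcal{C})$ is monadic exactly as in \cref{Prop:Monadic}, and then constructs a left adjoint to $\mathcal{M}\to\mathcal{C}$ using the right adjoint of $F$, your candidate $c\mapsto y_{\mathcal{K}}(Fc)$ being precisely the paper's restriction of $F_{!}$ to representables. The only (presentational) difference is that the paper obtains the adjunction by restricting $F_{!}\dashv F^{*}$ to the full subcategories preserved by both functors, whereas you verify the hom-space equivalence directly via Yoneda.
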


\begin{proof} The proof is the same as in \cref{Prop:Monadic} except for the part about the existence of a left adjoint functor $\mathcal{C} \to \mathcal{M}$ (which in \cref{Prop:Monadic} follows from a presentability argument). Because $F:\mathcal{C} \to \mathcal{K}$ has a right adjoint $R$, the restriction functor $F^* : \Pr(\mathcal{K}) \to \Pr(\mathcal{C})$ sends the representable at $X \in \mathcal{K}$ to the representable at $R(X) \in \mathcal{C}$, and (as for any functor $F$), its left adjoint functor $F_!: \Pr(\mathcal{C}) \to \Pr(\mathcal{K})$ sends representables to representables. It follows that, as $\mathcal{C}$ and $\mathcal{M}$ are respectively full subcategories of $\Pr(\mathcal{C})$ and $\Pr(\mathcal{K})$ preserved by the action of $F^*$ and $F_!$, the restriction of $F_!$ to a functor $\mathcal{C} \to \mathcal{M}$ is a left adjoint to the restriction of $F^*:\mathcal{M} \to \mathcal{C}$.
\end{proof}

\begin{construction}\label{cstr:Omega_functor} \Cref{lem:Kl_pb_monadicity} allows us to construct a functor $\Omega: \Ladj \to \Mon_\mathcal{C}$, or more precisely, a functor $\Ladj^{op} \to \Md_\mathcal{C}$. The construction that sends an essentially surjective left adjoint functor $F:\mathcal{C} \to \mathcal{K}$ to the pullback $\mathcal{M} \to \mathcal{C}$ as in \cref{lem:Kl_pb_monadicity} is a contravariant functor: The presheaf construction (with its contravariant functorialiry) defines a functor $((\icat)_{\backslash \mathcal{C}})^{op} \to (\icat)_{/\Pr(\mathcal{C})}$ (up to some easily dealt with size issues) which can be composed with the pullback functor $(\icat)_{/\Pr(\mathcal{C})} \to (\icat)_{/\mathcal{C}}$. Finally \cref{lem:Kl_pb_monadicity} shows that this functors sends the full subcategory $\Ladj_\mathcal{C}$ to $\Md_\mathcal{C}$.

\end{construction}

We conclude the proof of \cref{th:Mon_as_Kleisli_main}, with:

\begin{proposition} The functor $\Omega: \Ladj_{\mathcal{C}} \to \Mon_\mathcal{C}$ of \cref{cstr:Omega_functor} is an inverse for $\Kl: \Mon_\mathcal{C} \to \Ladj$.\end{proposition}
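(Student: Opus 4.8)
The plan is to prove the two composites $\Omega \circ \Kl$ and $\Kl \circ \Omega$ are each naturally equivalent to the appropriate identity functor. Since both maps are already known to be functors ($\Kl$ by \cref{cor:functoriality_of_Kleisli}, $\Omega$ by \cref{cstr:Omega_functor}), exhibiting both natural equivalences identifies them as mutually inverse equivalences and finishes the proof of \cref{th:Mon_as_Kleisli_main}.

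For $\Omega \circ \Kl \simeq \mathrm{id}_{\Mon_\mathcal{C}}$ I would argue as follows. Given a monad $M$, the object $\Kl(M) \in \Ladj_\mathcal{C}$ is the free-algebra functor $\mathcal{C} \to \mathcal{C}_M$, and applying $\Omega$ forms the pullback $\mathcal{C} \times_{\Prsh(\mathcal{C})} \Prsh(\mathcal{C}_M)$ along the restricted Yoneda embeddings. This is precisely the square of \cref{prop:EM_pb_of_Kl}, which identifies this pullback with the forgetful functor $\mathcal{C}^M \to \mathcal{C}$; through the equivalence of \cref{th:monads=monadic} this recovers $M$. The naturality in $M$ should come from the functoriality of every ingredient used, namely the Kleisli construction, the presheaf functor, the pullback, and the restricted Yoneda embedding (natural by \cref{cor:rest_yon_fct}), so that the objectwise identifications $\mathcal{C}^M \simeq \Omega(\Kl(M))$ assemble into a natural equivalence.

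For $\Kl \circ \Omega \simeq \mathrm{id}_{\Ladj_\mathcal{C}}$ I would fix an essentially surjective left adjoint $F : \mathcal{C} \to \mathcal{K}$ with right adjoint $R$ and set $M = \Omega(F)$, so that $\mathcal{C}^M = \mathcal{C} \times_{\Prsh(\mathcal{C})} \Prsh(\mathcal{K})$. The projection $q : \mathcal{C}^M \to \Prsh(\mathcal{K})$ is fully faithful, being a base change of the fully faithful Yoneda embedding $\mathcal{C} \to \Prsh(\mathcal{C})$ (exactly as in \cref{Prop:Monadic}). From the proof of \cref{lem:Kl_pb_monadicity} the free-algebra functor is the restriction of $F_!$, so it sends $c$ to the object of $\mathcal{C}^M$ whose $\Prsh(\mathcal{K})$-component is $F_!(y_\mathcal{C}(c)) \simeq y_\mathcal{K}(F(c))$. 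Hence $q$ carries the Kleisli category $\mathcal{C}_M$ (the full subcategory of free algebras) fully faithfully onto the representable presheaves, and since $F$ is essentially surjective these exhaust $y_\mathcal{K}(\mathcal{K}) \simeq \mathcal{K}$. This yields an equivalence $\mathcal{C}_M \simeq \mathcal{K}$ under which the composite $\mathcal{C} \to \mathcal{C}_M \simeq \mathcal{K}$ is identified with $F$, i.e. $\Kl(\Omega(F)) \simeq F$ in $(\icat)_{\mathcal{C}/}$.

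The hard part will not be the objectwise computations above but the \emph{coherence}: upgrading these pointwise identifications to natural equivalences of functors. For the second composite in particular, I would want to realize the equivalence $\mathcal{C}_M \simeq \mathcal{K}$ uniformly in $F$, using that the projection $q$ and the Yoneda embedding $y_\mathcal{K}$ are natural in the pretheory. The functoriality machinery of \cref{prop:partial_adj_natural} together with the naturality of the restricted Yoneda embeddings from \cref{cor:rest_yon_fct} — the same tools that were needed to make the restricted Yoneda embeddings into a natural transformation — is what makes this assembly possible. Once both natural equivalences are established, the conclusion that $\Omega$ and $\Kl$ are inverse equivalences is immediate.
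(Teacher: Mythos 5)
Your proposal is correct and follows essentially the same route as the paper: the first composite is handled via the naturality of the restricted Yoneda embedding (\cref{cor:rest_yon_fct}) together with \cref{prop:EM_pb_of_Kl} and \cref{th:monads=monadic}, and the second via the explicit description of the left adjoint from \cref{lem:Kl_pb_monadicity} identifying free algebras with representables in $\Prsh(\mathcal{K})$. The coherence step you flag as the hard part is resolved in the paper exactly as you anticipate, by \cref{prop:partial_adj_natural} (with its hypotheses checked from the proof of \cref{lem:Kl_pb_monadicity}) combined with the naturality of the Yoneda embedding for the covariant functoriality of $\Prsh$.
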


\begin{proof} We will construct two explicit natural isomorphisms $ \Omega \circ \Kl(M) \to M $ and $\Kl \circ \Omega(\mathcal{K}) \to \mathcal{K}$.

By \cref{cor:rest_yon_fct} the restricted Yoneda embedding $\mathcal{C}^M \to \Pr(\mathcal{C}_M)$ is natural in $M$. Given the pullback defining the category of algebras of $\Omega(\mathcal{C}_M)$ this translated into a map, natural in $M$, from $\mathcal{C}^M$ to that category of algebras, which by \cref{prop:EM_pb_of_Kl} is an equivalence. Though the equivalence of \cref{th:monads=monadic}, this translate to a isomorphism of monads $M \to \Omega \circ \Kl (M)$.

Given $F:\mathcal{C} \to \mathcal{K}$ in $\Ladj$, recall that the category of algebras $\mathcal{C}^{\Omega(F)}$ is constructed (functorially) as the pullback:

\[ \begin{tikzcd}
\mathcal{C}^{\Omega(F)} \ar[d] \ar[r] & \Pr \mathcal{K} \ar[d,"F^*"] \\
\mathcal{C} \ar[r] & \Pr \mathcal{C}
\end{tikzcd}
\]

Its Kleisli category is the essentially image of the left adjoint of $\mathcal{C}^{\Omega(F)} \to \mathcal{C}$ and it is made functorial by \cref{prop:Partial_adjoint_fct}. It hence follows from \cref{prop:partial_adj_natural} (that the assumption are satisfied follows from the proof of \cref{lem:Kl_pb_monadicity}) that we have a natural transformation $\mathcal{C}_{\Omega(F)} \to \Pr \mathcal{K}$ where $\Pr$ has its covariant/left adjoint functoriality\footnote{We refer again to section 6 of \cite{hebestreit2020orthofibrations} for the fact that the two possible definition of this covariant functoriality are equivalent.}. Now the explicit construction of the left adjoint to $\mathcal{C}^{\Omega(F)} \to \mathcal{C}$ done in the proof of \cref{lem:Kl_pb_monadicity} shows that the functor  $\mathcal{C}_{\Omega(F)} \to \Pr \mathcal{K}$ induces an equivalence between $\mathcal{C}_{\Omega(F)}$ and the full subcategory of $\Pr \mathcal{K}$ of representable presheaves (which is essentially $\mathcal{K}$). As the Yoneda embedding of $\mathcal{K}$ into $\Pr \mathcal{K}$ is natural for this left adjoint/covariant functoriality of $\Pr$ (again by section 6 of \cite{hebestreit2020orthofibrations}), this boils down to a natural equivalence (under $\mathcal{C}$) $\mathcal{C}_{\Omega(F)} \simeq \mathcal{K}$  which concludes the proof.
\end{proof}

\section{$E_{1}$, $E_2$ and $E_\infty$-algebras}
\label{sec:E1_algebra}

In this section we show that the monads on the $\infty$-category $\Sp$ of spaces corresponding to the $E_1$, $E_2$ and $E_\infty$-operads can be seen respectively as "induced" the free monoid monad on Set, the free braided monoid on groupoids and the free symmetric monoid on groupoids. By induced here we mean that when restricted to appropriate category of arities they corresponds to the same theories. 

It should be noted that the $E_2$ and $E_\infty$ operads cannot be described by the framework of planar operads that we recalled in \cref{sec:monads_prelim}. It needs the more general ``symmetric'' operads framework. We will not recall the details of this and we refer directly to \cite{lurieHA}. However, to fix notation, we note that, similarly to how a planar operad is encoded by a map $\mathcal{O}^\circledast \to N(\Delta^{op})$, a symmetric operad is encoded by a map $\mathcal{O}^{\otimes} \rightarrow N(\mathrm{Fin}_{*})$ of $\infty$-categories, where $\mathrm{Fin}_{*}$ is the category of finite pointed sets. 

We first recall some basic facts about sifted diagrams:

\begin{definition}\label{def6.1}
An $\infty$-category $K$ is said to be \emph{sifted} if the diagonal map $K \rightarrow K \times K$ is cofinal.
\end{definition}

\begin{remark}\label{rmk6.2}
Note that the property of being sifted is invariant under equivalence of $\infty$-categories (see \cite[Corollary 4.1.1.10]{lurieHTT}).
\end{remark}

\begin{lemma}\label{lem6.3}
Suppose that $K$ is an $\infty$-category that has finite coproducts. Then $K$ is sifted. 
\end{lemma}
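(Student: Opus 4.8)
The plan is to verify \cref{def6.1} directly by checking that the diagonal $\Delta \colon K \to K \times K$ is cofinal via Lurie's cofinality criterion \cite[Theorem 4.1.3.1]{lurieHTT}. That criterion reduces the claim to showing, for every object $(x,y) \in K \times K$, that the comma $\infty$-category
\[ K_{(x,y)/} := K \times_{K \times K} (K \times K)_{(x,y)/} \]
is weakly contractible. Unwinding the definitions, an object of $K_{(x,y)/}$ is an object $k \in K$ equipped with a map $(x,y) \to (k,k) = \Delta(k)$ in $K \times K$, that is, a pair of morphisms $f \colon x \to k$ and $g \colon y \to k$; a morphism from $(k,f,g)$ to $(k',f',g')$ is a map $h \colon k \to k'$ together with the compatibilities $hf \simeq f'$ and $hg \simeq g'$. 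So $K_{(x,y)/}$ is the $\infty$-category of objects of $K$ equipped with a map from each of $x$ and $y$.

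The key step is to exhibit an initial object of $K_{(x,y)/}$. I claim that the binary coproduct $x \sqcup y$ (which exists since $K$ has finite coproducts), equipped with its two coprojections $\iota_x \colon x \to x \sqcup y$ and $\iota_y \colon y \to x \sqcup y$, is initial. Indeed, for any object $(k,f,g)$ the space of morphisms out of $(x \sqcup y, \iota_x, \iota_y)$ is the fiber over $(f,g)$ of the restriction map
\[ \Map_K(x \sqcup y, k) \to \Map_K(x,k) \times \Map_K(y,k), \]
and this map is an equivalence precisely by the universal property of the coproduct. Hence every mapping space out of $(x \sqcup y, \iota_x, \iota_y)$ is contractible, which is exactly the assertion that it is an initial object of $K_{(x,y)/}$.

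To conclude, I would invoke the standard fact that an $\infty$-category possessing an initial object is weakly contractible: the essentially unique maps out of the initial object $i$ assemble into a natural transformation from the constant functor at $i$ to the identity, and realizing on classifying spaces produces a contraction (equivalently, one passes to opposite categories and uses the dual statement for terminal objects). Thus each $K_{(x,y)/}$ is weakly contractible, $\Delta$ is cofinal, and $K$ is sifted. Note that only binary coproducts are actually used in the cofinality argument; the empty coproduct is needed merely to guarantee that $K$ is nonempty.

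The main point requiring care is the bookkeeping that identifies $K_{(x,y)/}$ with the $\infty$-category of objects equipped with maps from $x$ and from $y$, and the verification that the universal property of $x \sqcup y$ genuinely yields \emph{contractible} mapping spaces in this comma category (so that it has an honest initial object). Beyond that identification the argument is formal, so I do not expect any deeper obstacle.
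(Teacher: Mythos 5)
Your proof is correct and follows essentially the same route as the paper: both reduce siftedness via the cofinality criterion of \cite[Theorem 4.1.3.1]{lurieHTT} to weak contractibility of the comma categories $K \times_{K \times K} (K \times K)_{(x,y)/}$, and then observe that the coproduct $x \sqcup y$ with its coprojections is an initial object, which forces weak contractibility. The paper states this tersely while you spell out the identification of the comma category and the verification that the universal property of the coproduct yields contractible mapping spaces, but the underlying argument is identical.
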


\begin{proof}
 By \cite[4.1.3.1]{lurieHA}, it suffices to show that for all $a, b \in K$, $K \times_{K \times K} (K \times K)_{(a, b)/} \cong  K_{b/} \times_{K} K_{a/} \cong K_{\{ a, b\}/}$ is weakly contractible. But this $\infty$-category is weakly contractible since it has an initial object, the coproduct of $a, b$. 
\end{proof}


We say that an $\infty$-operad $\mathcal{O}^{\otimes}$ is a \emph{non-colored $\infty$-operad} if its underlying $\infty$-category is terminal, i.e. if $\mathcal{O} \cong \Delta^{0}$ (see \cite[Example 2.1.1.6]{lurieHA}). When $\mathcal{O}$ is a non-colored $\infty$-operad, we have a forgetful functor $\mathrm{Alg}_{\mathcal{O}^{\otimes}}(\mathcal{B}) \rightarrow \mathcal{B}$ for any symmetric monoidal $\infty$-category $\mathcal{B}$ (or more generally any $\mathcal{O}$-monoidal $\infty$-category).

The goal of the next few paragraphs is to show that given a non-colored $\infty$-operad $\mathcal{O}^{\otimes}$, then the forgetful functor $\mathrm{Alg}_{\mathcal{O}^{\otimes}}(\mathcal{B}) \rightarrow \mathcal{B}$ where $\mathcal{B}$ is one of the (cartesian) symmetric monoidal $\infty$-categories $\mathrm{Set}, \mathrm{Gdp}$ or $\mathcal{S}$, is monadic and the associated monad is $\mathrm{Fin}$-nervous, where $\mathrm{Fin} \subset \mathcal{B}$ is the full subcategory of finite sets.

Recall that the $\infty$-category of spaces $\mathcal{S}$, as well as its full subcategory $\mathrm{Set}$ and $\mathrm{Gpd}$ of sets (i.e. discrete spaces) and groupoids (i.e. $1$-truncated spaces), are cartesian closed locally presentable $\infty$-categories. In particular \cref{lem6.4} and \cref{lem6.5} below can be applied to them.

\begin{lemma}\label{lem6.4}
Let $\mathcal{O}^{\otimes}$ be a non-colored $\infty$-operad and $\mathcal{C}$ a locally presentable cartesian closed symmetric monoidal $\infty$-category.

Then $\infty$-category $Alg_{\mathcal{O}^{\otimes}}(\mathcal{S})$ has all sifted colimits and the forgetful functor $Alg_{\mathcal{O}^{\otimes}}(\mathcal{C}) \rightarrow  \Fun(\mathcal{O}, \mathcal{C}) \simeq \mathcal{C}$ preserves sifted colimits. 
\end{lemma}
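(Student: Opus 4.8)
The plan is to deduce this directly from Lurie's general result on the existence and computation of sifted colimits of algebras over an $\infty$-operad, namely Proposition 3.2.3.1 of \cite{lurieHA}. We regard the cartesian symmetric monoidal structure as a coCartesian fibration of $\infty$-operads $\mathcal{C}^{\otimes} \to N(\mathrm{Fin}_*)$ (constructed in Section 2.4.1 of \cite{lurieHA}), so that $\mathrm{Alg}_{\mathcal{O}^{\otimes}}(\mathcal{C})$ is the $\infty$-category of operad maps $\mathcal{O}^{\otimes} \to \mathcal{C}^{\otimes}$, and the forgetful functor to $\Fun(\mathcal{O},\mathcal{C}) \simeq \mathcal{C}$ records the underlying object (using $\mathcal{O} \simeq \Delta^0$). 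For any sifted simplicial set $K$, Proposition 3.2.3.1 then asserts that $\mathrm{Alg}_{\mathcal{O}^{\otimes}}(\mathcal{C})$ admits $K$-indexed colimits and that a cocone is a colimit cocone if and only if its image in each fiber $\mathcal{C}$ is one --- i.e.\ the forgetful functor both preserves and detects $K$-colimits --- provided two hypotheses hold.

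The first hypothesis is that each fiber $\mathcal{C}$ admits $K$-indexed colimits; this is immediate, since $\mathcal{C}$ is locally presentable, hence cocomplete, and in particular has all sifted colimits. The second hypothesis is that the multilinear functors attached to the operations of $\mathcal{O}^{\otimes}$ preserve $K$-indexed colimits separately in each variable. For the cartesian structure these functors are (up to reindexing) the finite-product functors $(X_1,\dots,X_n) \mapsto X_1 \times \cdots \times X_n$, so separate preservation amounts to asking that $X \times (\uvar)$ preserve colimits for each fixed $X$. This is exactly what cartesian closedness supplies: $X \times (\uvar)$ has a right adjoint (the internal hom), hence preserves all colimits. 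Both hypotheses therefore hold for every sifted $K$, and the two clauses of the lemma follow at once (in particular for $\mathcal{C} = \Sp$, as well as for $\mathrm{Set}$ and $\mathrm{Gpd}$).

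The only real subtlety --- which is precisely what Proposition 3.2.3.1 packages, and which explains why siftedness rather than mere cocompleteness is the relevant hypothesis --- is that to transport an algebra structure along a $K$-colimit one must identify $(\colim_k A_k)^{\otimes n}$ with $\colim_k (A_k^{\otimes n})$. Preservation of colimits in each variable separately only computes $(\colim_k A_k)^{\otimes n}$ as a colimit indexed by $K^n$; to reduce this to the diagonal colimit indexed by $K$ one needs the diagonal $K \to K^n$ to be cofinal, which is exactly the content of siftedness (\cref{def6.1}), obtained by induction from the cofinality of $K \to K \times K$. I do not anticipate any genuine obstacle beyond checking that the operations of the cartesian structure are the product functors and invoking cartesian closedness for separate colimit preservation; the conceptual work is entirely absorbed into the cited proposition.
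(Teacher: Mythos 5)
Your proposal is correct and follows essentially the same route as the paper: both invoke \cite[Proposition 3.2.3.1]{lurieHA}, identify the multilinear operations of the cartesian structure with the finite-product functors $\mathcal{C}^n \to \mathcal{C}$, and use cartesian closedness to get preservation of (sifted) colimits separately in each variable. The extra explanation of why siftedness (diagonal cofinality) is the right hypothesis is accurate but, as you note, already absorbed into the cited proposition.
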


\begin{proof}
 For the first statement \cite[Proposition 3.2.3.1]{lurieHA} implies that it suffices to show that for $n \in \mathbb{N}$, the induced map $\mathcal{C}^{\otimes}_{[n]} \rightarrow \mathcal{C}^{\otimes}_{[1]}$ (see \cite[Remark 2.1.2.6]{lurieHA}), preserves sifted colimits separately in each variable. Because $\mathcal{C}$ is cartesian, this functor can be identified with the functor $\mathcal{C}^{n} \rightarrow \mathcal{C}$ that takes a collection of objects to their n-fold product. But since $\mathcal{C}$, is cartesian closed, products preserve sifted colimits seperately in each variable, hence the result.
 
The fact that the forgetful functor preserves all sifted colimits follows from another application of \cite[Proposition 3.2.3.1]{lurieHA}.

\end{proof}

The left adjoint of the forgetful functor $Alg_{\mathcal{O}^{\otimes}}(\mathcal{C}) \rightarrow \mathcal{C}$ (if it exists) is called the \emph{free $\mathcal{O}$-algebra functor} and is denoted $\text{Free}_{\mathcal{O}}^{\mathcal{C}}$.

\begin{lemma}\label{lem6.5}
Let $\mathcal{O}^{\otimes}$ and $\mathcal{C}$ as in \cref{lem6.4}. Then the forgetful functor $Alg_{\mathcal{O}^{\otimes}}(\mathcal{C}) \rightarrow \mathcal{C}$ is a monadic right adjoint functor. 
\end{lemma}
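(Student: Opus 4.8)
The plan is to verify the hypotheses of Lurie's $\infty$-categorical Barr--Beck monadicity theorem (\cite[Theorem 4.7.3.5]{lurieHA}, recalled in \cref{sec:monads_prelim}) for the forgetful functor $U : Alg_{\mathcal{O}^{\otimes}}(\mathcal{C}) \to \Fun(\mathcal{O},\mathcal{C}) \simeq \mathcal{C}$, where the identification of the target with $\mathcal{C}$ uses that $\mathcal{O}$ is non-colored. Three things must be checked: that $U$ admits a left adjoint, that $U$ is conservative, and that $U$ preserves colimits of $U$-split simplicial objects.

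For the left adjoint, I would argue as follows. Since $\mathcal{C}$ is cartesian closed, the product preserves colimits separately in each variable, so the cartesian symmetric monoidal structure on $\mathcal{C}$ is compatible with colimits; that is, $\mathcal{C}$ is a presentable symmetric monoidal $\infty$-category. By \cite[Corollary 3.2.3.5]{lurieHA}, $Alg_{\mathcal{O}^{\otimes}}(\mathcal{C})$ is then presentable. The forgetful functor $U$ preserves all small limits (limits of algebras are computed on underlying objects, \cite[Corollary 3.2.2.5]{lurieHA}) and, by \cref{lem6.4}, preserves sifted and in particular filtered colimits, so it is accessible. An accessible, limit-preserving functor between presentable $\infty$-categories admits a left adjoint by the adjoint functor theorem \cite[Corollary 5.5.2.9]{lurieHTT}; this left adjoint is the free $\mathcal{O}$-algebra functor $\mathrm{Free}_{\mathcal{O}}^{\mathcal{C}}$.

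Conservativity of $U$ follows from the definition of algebras together with the cartesian hypothesis: a morphism of $\mathcal{O}$-algebras is an equivalence precisely when it is an equivalence after evaluation at every object $\langle n\rangle \in \mathrm{Fin}_{*}$, and since $\mathcal{C}$ is cartesian the value at $\langle n\rangle$ is the $n$-fold product of the underlying value at $\langle 1\rangle$. Hence a morphism that becomes an equivalence after applying $U$ is already an equivalence of algebras (this is the standard conservativity of the forgetful functor; compare \cite[Lemma 3.2.2.6]{lurieHA}). For the third condition, $\Delta^{op}$ is a sifted $\infty$-category (\cite[Lemma 5.5.8.4]{lurieHTT}), so by \cref{lem6.4} the $\infty$-category $Alg_{\mathcal{O}^{\otimes}}(\mathcal{C})$ admits all geometric realizations and $U$ preserves them. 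In particular every $U$-split simplicial object has a colimit preserved by $U$, which is (a fortiori) the Barr--Beck condition. Having verified all three hypotheses, the monadicity theorem yields that $U$ is monadic.

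The main obstacle is the first step: securing the existence of the free algebra functor, which requires correctly packaging the presentability of $Alg_{\mathcal{O}^{\otimes}}(\mathcal{C})$ and the accessibility and limit-preservation of $U$ so that the adjoint functor theorem applies. The conservativity and split-colimit conditions are then essentially immediate, the latter being a direct consequence of \cref{lem6.4} once one records that $\Delta^{op}$ is sifted.
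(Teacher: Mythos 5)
Your proposal is correct, and its overall skeleton is the same as the paper's: both verify the three hypotheses of Lurie's Barr--Beck theorem, and your treatment of the split-simplicial condition (siftedness of $N(\Delta^{op})$ plus \cref{lem6.4}) is identical to the paper's. The two sub-arguments where you diverge are worth noting. For the existence of the left adjoint, the paper cites \cite[Example 3.1.3.6]{lurieHA} directly: compatibility of the cartesian monoidal structure with colimits (which is exactly what cartesian closedness plus presentability gives) already yields free $\mathcal{O}$-algebras, with no need to invoke presentability of $Alg_{\mathcal{O}^{\otimes}}(\mathcal{C})$ or the adjoint functor theorem. Your route --- presentability of the algebra category via \cite[Corollary 3.2.3.5]{lurieHA}, then limit-preservation and accessibility of $U$, then \cite[Corollary 5.5.2.9]{lurieHTT} --- is longer but also valid, and it records the presentability of $Alg_{\mathcal{O}^{\otimes}}(\mathcal{C})$ as a useful byproduct. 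For conservativity, the paper gives a slicker argument needing only \cite[Corollary 3.2.2.5]{lurieHA}: the forgetful functor reflects limits, and equivalences are detected as limits of $\Delta^{0}$-shaped diagrams, so reflecting limits implies reflecting equivalences. Your argument via evaluation at the objects over $\langle n\rangle$ is the standard one and is covered by the conservativity statement you cite; note, however, that the cartesian hypothesis plays no role there (conservativity of the forgetful functor holds for algebras over any $\infty$-operad), and that the value of an algebra over $\langle n\rangle$ lives in $\mathcal{C}^{\otimes}_{\langle n\rangle}\simeq \mathcal{C}^{n}$ as an $n$-tuple rather than as an $n$-fold product in $\mathcal{C}$ --- a harmless conflation here, but worth keeping straight.
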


\begin{proof}
We verify the three hypotheses of Barr-Beck-Lurie. Since colimits in $\mathcal{C}$ are preserved by the products and $\mathcal{C}$ is presentable, it follows from \cite[Example 3.1.3.6]{lurieHA} and \cref{lem6.4} that the functor is a right adjoint. Since $N(\Delta^{op})$ is sifted (\cite[Lemma 5.5.8.3]{lurieHTT}), \ref{lem6.4} implies that it preserves colimits of split simplicial objects. The functor reflects limits (\cite[Corollary 3.2.2.5]{lurieHA}) and hence reflects equivalences; the limit of a diagram $X: \Delta^{0} \rightarrow \mathcal{C}$ is just an object equivalent to $X$. 
\end{proof}





\begin{lemma}\label{lem6.7}
For each $s \in \mathcal{S}$, the category $\mathrm{Fin}_{/s}$ is sifted.
\end{lemma}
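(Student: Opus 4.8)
The plan is to deduce this directly from \cref{lem6.3} by checking that the slice $\infty$-category $\mathrm{Fin}_{/s}$ admits finite coproducts. The candidate coproduct of two objects $(F_1 \to s)$ and $(F_2 \to s)$ is the object $(F_1 \sqcup F_2 \to s)$, where $F_1 \sqcup F_2$ is the (finite) coproduct in $\mathrm{Fin} \subset \mathcal{S}$ and the structure map to $s$ is the one induced by the universal property of the coproduct; the initial object is $(\emptyset \to s)$. Since $\mathrm{Fin}$ is closed under finite coproducts in $\mathcal{S}$ (a disjoint union of finite sets is finite), these objects indeed lie in $\mathrm{Fin}_{/s}$.

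First I would verify that $(F_1 \sqcup F_2 \to s)$ really is the coproduct in the slice. This is an instance of the general fact that the forgetful functor $\mathcal{S}_{/s} \to \mathcal{S}$ from an overcategory creates colimits, so that finite coproducts in $\mathrm{Fin}_{/s}$ are computed on underlying spaces. Alternatively, and more self-containedly, one can compute the mapping spaces directly: using the description of mapping spaces in a slice from \cite[Proposition 5.5.5.12]{lurieHTT}, the space $\mathrm{Map}_{\mathrm{Fin}_{/s}}((F_1 \sqcup F_2 \to s),(G \to s))$ is the fiber of $\mathrm{Map}_{\mathcal{S}}(F_1 \sqcup F_2, G) \to \mathrm{Map}_{\mathcal{S}}(F_1 \sqcup F_2, s)$ over the structure map. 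Since $\mathrm{Map}_{\mathcal{S}}(F_1 \sqcup F_2, \uvar) \simeq \mathrm{Map}_{\mathcal{S}}(F_1, \uvar) \times \mathrm{Map}_{\mathcal{S}}(F_2, \uvar)$, this fiber splits as the product $\mathrm{Map}_{\mathrm{Fin}_{/s}}(F_1 \to s, G \to s) \times \mathrm{Map}_{\mathrm{Fin}_{/s}}(F_2 \to s, G \to s)$, which is exactly the universal property of a coproduct. The same computation with $G = \emptyset$ identifies $(\emptyset \to s)$ as the initial object.

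Once finite coproducts are in hand, \cref{lem6.3} applies verbatim and shows that $\mathrm{Fin}_{/s}$ is sifted, completing the proof. I do not expect a serious obstacle here: the only point requiring any care is the identification of coproducts in the slice with coproducts of the underlying finite sets, and this is routine, following either from the general behaviour of overcategories under colimits or from the elementary mapping-space computation sketched above.
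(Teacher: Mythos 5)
Your proof is correct and follows essentially the same route as the paper: both arguments establish that $\mathrm{Fin}_{/s}$ has finite coproducts, computed on underlying objects in $\mathcal{S}$ (the paper asserts this via the standard fact that coproducts in $\mathcal{S}_{/s}$ are computed in $\mathcal{S}$, while you additionally verify it by the mapping-space computation in the slice), and then both conclude by \cref{lem6.3}. The only slip is your phrase ``the same computation with $G = \emptyset$'' for the initial object --- you mean the empty coproduct, i.e.\ taking the \emph{source} to be $\emptyset$, not the target $G$ --- but this is cosmetic and does not affect the argument.
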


\begin{proof}
Coproducts in $\mathcal{S}_{/s}$ are computed as coproducts in $\mathcal{S}$, in particular $\mathrm{Fin}_{/s}$, seen as a full subcategory of $\mathcal{S}_{/s}$ is closed under finite coproducts because $\mathrm{Fin}$ is closed under finite coproducts in $\mathcal{S}$. The result then follows from \cref{lem6.3}.
\end{proof}

\begin{theorem}\label{thm6.8}
Suppose that $\mathcal{B} = \mathcal{S}, \mathrm{Gdp}$ or $\mathrm{Set}$. Let $\mathcal{O}^\otimes$ be a non-colored $\infty$-operad.
Then the monad on $\mathcal{B}$ corresponding the forgetful functor $Alg_{\mathcal{O}^{\otimes}}(\mathcal{B}) \rightarrow \mathcal{B}$ is $\mathrm{Fin}$-nervous. 
\end{theorem}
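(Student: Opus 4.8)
The plan is to show that $M$ is a monad with arities in $\mathrm{Fin}$ in the sense of \cref{def:monad_with_arities} and then to conclude by \cref{th:arities_imply_nervous}. All three of $\mathcal{S},\mathrm{Gpd},\mathrm{Set}$ are presentable and $\mathrm{Fin}$ is a small full subcategory; it is moreover dense, since it contains the terminal object $\ast$, for which $\Map(\ast,-)$ is equivalent to the identity, and any full subcategory containing a dense one is dense. Note that one cannot instead invoke \cref{th:nervous_monad}, because the subcategory of $\omega$-compact objects of $\mathcal{S}$ is strictly larger than $\mathrm{Fin}$. The entire point of the small dense subcategory $\mathrm{Fin}$ is that every slice $\mathrm{Fin}_{/X}$ has finite coproducts (computed as in $\mathcal{B}$), hence is \emph{sifted} by \cref{lem6.3}, extending \cref{lem6.7} from $\mathcal{S}$ to all three categories; consequently the canonical density colimit $X \simeq \colim_{a \in \mathrm{Fin}_{/X}} a$ is a sifted colimit.

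The first substantive step is that the monad $M \simeq U\,F$, with $U$ the monadic forgetful functor of \cref{lem6.5} and $F=\mathrm{Free}_{\mathcal{O}}$ its left adjoint, preserves sifted colimits: $F$ preserves all colimits as a left adjoint, while $U$ preserves sifted colimits by \cref{lem6.4}. Applying $M$ to the sifted density colimit therefore yields $M(X) \simeq \colim_{a \in \mathrm{Fin}_{/X}} M(a)$ in $\mathcal{B}$.

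It then remains to verify that the restricted Yoneda embedding $i\colon \mathcal{B}\to\Prsh(\mathrm{Fin})$ carries this colimit to a colimit, which is exactly the arities condition. Since colimits in $\Prsh(\mathrm{Fin})$ are computed objectwise, this reduces to showing that $\Map_{\mathcal{B}}(b,-)$ preserves the relevant colimit for each finite set $b$. Writing $b$ as a coproduct of $n=\lvert b\rvert$ copies of $\ast$ and letting $j\colon\mathcal{B}\hookrightarrow\mathcal{S}$ denote the (fully faithful) inclusion, we have $\Map_{\mathcal{B}}(b,-)\simeq (j(-))^{n}$, an $n$-fold power of $j = \Map_{\mathcal{B}}(\ast,-)$. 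The engine of the argument is that finite products commute with sifted colimits in $\mathcal{S}$: this is immediate from \cref{def6.1}, since cofinality of the diagonal $K\to K\times K$ identifies $\colim_K$ of a pointwise product of finitely many diagrams with the product of their colimits. Thus, once $j$ is known to send $M(X)\simeq\colim_a M(a)$ to a colimit in $\mathcal{S}$, each power of it does too, and $i\circ M$ preserves the density colimit.

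The main obstacle is precisely this last point: that $j\colon\mathcal{B}\hookrightarrow\mathcal{S}$ sends $\colim_{a\in\mathrm{Fin}_{/X}} M(a)$ to the corresponding colimit in $\mathcal{S}$. For $\mathcal{B}=\mathcal{S}$ this is vacuous, as $j=\mathrm{id}$. For $\mathcal{B}=\mathrm{Set}$ one checks that each slice $\mathrm{Fin}_{/X}$ is in fact \emph{filtered} — a parallel pair of maps into a finite set over the set $X$ is coequalized by a finite quotient compatible with the structure map — and $j$ preserves filtered colimits. The delicate case is $\mathcal{B}=\mathrm{Gpd}$, where $\mathrm{Fin}_{/X}$ is sifted but not filtered and $j$ is only a right adjoint (left adjoint to the truncation $\tau_{\le 1}$), so it does not preserve arbitrary sifted colimits. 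Here I would argue that it nonetheless preserves \emph{this} colimit, equivalently that $\colim_{a\in\mathrm{Fin}_{/X}} j\,M(a)$ remains $1$-truncated: using that $\tau_{\le 1}$ preserves finite products and that $\mathrm{Gpd}\subset\mathcal{S}$ is closed under products, one reduces, via the free-algebra colimit formula, to the commutation of finite products with the sifted colimit $\colim_a a\simeq j(X)$ already established in $\mathcal{S}$. This reconciliation between colimits computed in $\mathrm{Gpd}$ and in $\mathcal{S}$ is the only genuinely non-formal ingredient, and it is where the cartesian closedness hypothesis of \cref{lem6.4} is doing real work.
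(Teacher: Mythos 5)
Your proposal has the same skeleton as the paper's proof: reduce to the arities condition of \cref{def:monad_with_arities} and conclude by \cref{th:arities_imply_nervous}; use \cref{lem6.3}/\cref{lem6.7} to see that the canonical colimits are sifted; factor $M$ as the free functor followed by the forgetful functor and apply \cref{lem6.4}; then reduce the condition on the restricted Yoneda embedding to commutation of finite products with sifted colimits. Where you genuinely depart from the paper is in refusing to treat this last step as formal, and you are right to do so. Since $\Prsh(\mathrm{Fin})$ consists of presheaves of \emph{spaces}, its colimits are computed pointwise in $\mathcal{S}$, whereas the colimit $M(X)\simeq\colim_a M(a)$ produced by the first step is a colimit in $\mathcal{B}$; so one really must show that the inclusion $j\colon\mathcal{B}\hookrightarrow\mathcal{S}$ carries $\colim^{\mathcal{B}}_a M(a)$ to $\colim^{\mathcal{S}}_a jM(a)$. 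The paper's own proof elides exactly this point: it asserts that $i\colon\mathcal{B}\to\Prsh(\mathrm{Fin})$ ``preserves sifted colimits'', which is false as stated when $\mathcal{B}=\mathrm{Set}$ or $\mathrm{Gpd}$ (the simplicial circle is a $\Delta^{op}$-indexed, hence sifted, diagram of finite sets whose colimit in $\mathrm{Set}$ is a point, while the pointwise colimit of the associated presheaves evaluates at $\ast$ to $S^1$); only the specific colimits $\colim_a M(a)$ over the canonical diagrams need to be, and are, preserved. Your treatment of $\mathcal{B}=\mathcal{S}$ (vacuous) and of $\mathcal{B}=\mathrm{Set}$ (the slices $\mathrm{Fin}_{/X}$ are filtered, since a parallel pair over $X$ is coequalized by a finite quotient over $X$, and $j$ preserves filtered colimits) is correct and complete.

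For $\mathcal{B}=\mathrm{Gpd}$, however, what you give is only a gesture, and as written it does not close. The statement to prove is that $C:=\colim^{\mathcal{S}}_{a\in\mathrm{Fin}_{/X}} jM(a)$ is $1$-truncated; it is then automatically equivalent to $jM(X)$, because $\tau_{\leq 1}$ preserves colimits, so $\tau_{\leq 1}C\simeq\colim^{\mathrm{Gpd}}_a M(a)\simeq M(X)$. Appealing to ``the free-algebra colimit formula'' and to product--sifted-colimit commutation in $\mathcal{S}$ is the right idea, but two concrete ingredients are missing. First, from the formula of \cite[Section 3.1.3]{lurieHA} one needs that for a \emph{finite set} $a$ the free algebra is computed by the same expression in $\mathrm{Gpd}$ and in $\mathcal{S}$ after truncating the operad spaces: $jM(a)\simeq\coprod_n (\tau_{\leq 1}\mathcal{O}(n)\times a^n)_{h\Sigma_n}$, with the homotopy quotients formed in $\mathcal{S}$ and no outer truncation needed; this rests on the fact that a homotopy quotient of a $1$-truncated space by a finite group is again $1$-truncated, via the fibre sequence $Y\to Y_{h\Sigma_n}\to B\Sigma_n$ --- precisely the argument the paper itself uses later in \cref{lem6.13}. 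Second, one commutes $\colim^{\mathcal{S}}_a$ past $\coprod_n$, past $(-)_{h\Sigma_n}=\colim_{B\Sigma_n}$, and past $\tau_{\leq 1}\mathcal{O}(n)\times(-)$ (all colimits in the cartesian closed $\mathcal{S}$), which reduces everything to $\colim^{\mathcal{S}}_a a^n\simeq (jX)^n$; this is your product--sifted-colimit commutation combined with $jX\simeq\colim^{\mathcal{S}}_a a$, the latter holding because $\mathrm{Fin}$ is dense in $\mathcal{S}$ and the slices $\mathrm{Fin}_{/X}$ formed in $\mathrm{Gpd}$ and in $\mathcal{S}$ agree. One obtains $C\simeq\coprod_n(\tau_{\leq 1}\mathcal{O}(n)\times (jX)^n)_{h\Sigma_n}$, which is $1$-truncated and equal to $jM(X)$ by the same fibre-sequence argument. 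With these two points supplied your proof is complete, and it is in fact more careful than the published one at exactly the step where care is required.
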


\begin{proof}

We will show more precisely that this monad, which we denote $M$, has arities in $\mathrm{Fin}$, in the sense of \cref{def:monad_with_arities}, which implies the result by \ref{th:arities_imply_nervous}. It suffices to show that the functor 
\[
\mathcal{B} \xrightarrow{M} \mathcal{B} \xrightarrow{i} \mathrm{Pr}(\mathrm{Fin})
\]
preserves $\mathrm{colim}_{a \in \mathrm{Fin}/X}(a)$ for each $X \in \mathcal{B}$.  By \ref{lem6.7}, it suffices to show that $M$ and $i$ preserve sifted colimits. The monad $M$ is the composite of the left adjoint $\text{Free}_{\mathcal{O}}^{\mathcal{B}}$, which preserves all colimits, and the forgetful functor $Alg_{\mathcal{O}^{\otimes}}(\mathcal{B}) \rightarrow \mathcal{B}$ which preserves sifted colimits by \cref{lem6.4}. Hence $M$ preserves sifted colimits.
 
 It suffices to show that the restricted Yoneda embedding $i$ preserves sifted colimits. Since colimits in $\mathrm{Pr}(\mathcal{A})$ are calculated pointwise, it suffices to show that for each $K \in \mathrm{Fin}$ and sifted $\infty$-category I, the natural map
 \[
 \mathrm{colim}_{i \in I} \Map_{\mathcal{S}}(K, a_{i}) \rightarrow \Map_{\mathcal{S}}(K, \mathrm{colim}_{i \in I} a_{i}) 
 \]
 is an equivalence. This can be identified with the map
 \[
 \prod_{j \in K} (\mathrm{colim}_{i \in I} a_{i}) \rightarrow \mathrm{colim}_{i \in I} \prod_{j \in K} a_{i}
 \]
 In other words, we want to show that sifted colimits preserve finite products in $\mathcal{B}$, which follows from $\mathcal{B}$ being cartesian closed and \cite[Proposition 5.5.8.6 and Lemma 5.5.8.11]{lurieHTT}.

\end{proof}

\begin{lemma}\label{lem6.9}
Suppose that  $G: \mathcal{C} \rightarrow \mathcal{D}$ is a fully faithful functor of $\infty$-categories, and $\mathcal{E}$ be an $\infty$-category. Then $\Fun(\mathcal{E}, \mathcal{C}) \rightarrow \Fun(\mathcal{E}, \mathcal{D})$ is fully faithful. 
\end{lemma}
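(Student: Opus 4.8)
The plan is to reduce the claim to the statement that a fully faithful functor induces equivalences on all mapping spaces, combined with the end formula for mapping spaces in a functor $\infty$-category that was already recalled in the proof of \cref{yonedanat}. Recall that $G : \mathcal{C} \to \mathcal{D}$ being fully faithful means precisely that for all $x, y \in \mathcal{C}$ the map $\Map_{\mathcal{C}}(x,y) \to \Map_{\mathcal{D}}(Gx, Gy)$ induced by $G$ is an equivalence of spaces. Writing $G_* : \Fun(\mathcal{E},\mathcal{C}) \to \Fun(\mathcal{E},\mathcal{D})$ for post-composition with $G$, the goal is to show that for any two functors $F_1, F_2 : \mathcal{E} \to \mathcal{C}$ the induced map $\Map_{\Fun(\mathcal{E},\mathcal{C})}(F_1,F_2) \to \Map_{\Fun(\mathcal{E},\mathcal{D})}(GF_1, GF_2)$ is an equivalence.

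First I would invoke Proposition 5.1 of \cite{gepner2017lax} (or Proposition 2.3 of \cite{glasman2016spectrum}) to express both mapping spaces as ends:
\[ \Map_{\Fun(\mathcal{E},\mathcal{C})}(F_1,F_2) \simeq \int_{e \in \mathcal{E}} \Map_{\mathcal{C}}(F_1(e),F_2(e)), \qquad \Map_{\Fun(\mathcal{E},\mathcal{D})}(GF_1,GF_2) \simeq \int_{e \in \mathcal{E}} \Map_{\mathcal{D}}(GF_1(e),GF_2(e)). \]
Each end is the limit over the twisted arrow category $\mathrm{Tw}(\mathcal{E})$ of the corresponding integrand, viewed as a functor $\mathcal{E}^{op} \times \mathcal{E} \to \Sp$. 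The componentwise maps $\Map_{\mathcal{C}}(F_1(e),F_2(e')) \to \Map_{\mathcal{D}}(GF_1(e),GF_2(e'))$ induced by $G$ assemble into a natural transformation between these two integrands, and each of them is an equivalence by full faithfulness of $G$. Since a limit of a pointwise equivalence of diagrams is again an equivalence, the induced map on ends is an equivalence, which gives the desired conclusion once we check that this induced map is the one coming from $G_*$.

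The only genuinely delicate point is precisely that last compatibility: verifying that, under the end identification, the map induced by post-composition with $G$ is the end of the componentwise $G$-maps and not some other comparison. This is exactly where the explicit ``the component at $e$ is $\lambda_e$'' description of the end recalled in the proof of \cref{yonedanat} is used, since $G_*$ sends a natural transformation with components $\lambda_e$ to the one with components $G(\lambda_e)$; naturality of the end identification in $\mathcal{C}$ then forces the two descriptions to agree. If one prefers to avoid this bookkeeping entirely, an alternative is to factor $G$ through an equivalence $\mathcal{C} \simeq \mathcal{C}'$ onto its essential image $\mathcal{C}' \subseteq \mathcal{D}$ and use that $\Fun(\mathcal{E},\mathcal{C}')$ is the full subcategory of $\Fun(\mathcal{E},\mathcal{D})$ spanned by the functors landing objectwise in $\mathcal{C}'$; but the end argument is the most direct given the tools already developed in this paper.
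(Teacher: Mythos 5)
Your main argument is correct in outline but takes a genuinely different route from the paper; interestingly, the ``alternative'' you relegate to your last sentence \emph{is} the paper's proof. The paper simply replaces $\mathcal{C}$ by its essential image, so that $G$ becomes the inclusion of a full subcategory $\mathcal{C}' \subseteq \mathcal{D}$; then $\Fun(\mathcal{E},\mathcal{C}')$ is isomorphic, as a simplicial set, to the full subcategory of $\Fun(\mathcal{E},\mathcal{D})$ spanned by functors landing objectwise in $\mathcal{C}'$, and inclusions of full subcategories are fully faithful. That argument is a one-liner and involves no mapping-space computations at all. Your main route --- expressing both mapping spaces as ends over $\mathrm{Tw}(\mathcal{E})$ via \cite{gepner2017lax}, and using that a pointwise equivalence of diagrams induces an equivalence on limits --- is sound as a strategy, and you correctly isolate the delicate step: identifying the map induced by $G_*$ with the map on ends induced by the componentwise equivalences. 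Be aware, however, that your resolution of that step is thinner than it looks. Checking that both composites send a transformation with components $\lambda_e$ to one with components $G(\lambda_e)$ is a statement about points, and agreement on points (or on $\pi_0$) does not make the comparison square of spaces homotopy-commutative, which is what transporting the property of being an equivalence across the square actually requires. What you need is that the equivalence of \cite{gepner2017lax} (or \cite{glasman2016spectrum}) is natural in the ambient category with respect to post-composition --- a true statement, extractable from their constructions (e.g.\ via the functoriality of twisted arrow categories, which also produces the natural transformation $\Map_{\mathcal{C}}(-,-) \to \Map_{\mathcal{D}}(G-,G-)$ that you implicitly use to ``assemble'' the componentwise maps) --- but it is not what the cited propositions state, so it would need its own argument. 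In terms of trade-offs: the paper's proof buys brevity and complete rigor at the model level; yours makes the mapping-space content explicit and would survive in settings where one cannot strictify $G$ into a full inclusion, at the price of this naturality bookkeeping. As written, if you keep the end argument you must supply the naturality statement; otherwise, promote your final remark to the actual proof.
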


\begin{proof} Up to equivalence of $\infty$-categories, one can assume that $\mathcal{C}$ is a full subcategory of $\mathcal{D}$, in which case $\Fun(\mathcal{E}, \mathcal{C})$ is isomorphic (as a simplicial) set to the full subcategory of $\Fun(\mathcal{E},\mathcal{D})$ of functors that sends all objects of $\mathcal{E}$ to $\mathcal{D}$.

\end{proof}


Suppose that $\mathcal{B} \subseteq \mathcal{S}$ is either $\mathrm{Set}$, $\mathrm{Gpd}$. We write $\Mt_{\mathcal{B}}^{(-)} \dashv \Th_{\mathcal{B}}$ for the adjunction of \ref{thm1.12} coming from the inclusion of arities $\mathrm{Fin} \subseteq \mathcal{B}$.

\begin{theorem}\label{thm6.10}
Let $\mathcal{B} \subsetneq \mathcal{S}$ be as above. Let $\mathcal{O}^{\otimes}$ be a non-colored $\infty$-operad. Suppose that the free algebra functor $\mathcal{S} \rightarrow Alg_{\mathcal{O}^{\otimes}}(\mathcal{S})$ takes elements of $\mathcal{B}$ to $Alg_{\mathcal{O}^{\otimes}}(\mathcal{B})$. Then there exists a theory $(\mathrm{Fin} \rightarrow \mathcal{K}) \in \PTh_{\mathrm{Fin}}$, so that 

\[
\Sp^{\Mt^{\mathcal{K}}_{\mathcal{S}}} \simeq \Mod_{\mathcal{K}}(\mathcal{S}) \simeq Alg_{\mathcal{O}^{\otimes}}(\mathcal{S}) \qquad \mathcal{B}^{\Mt^{\mathcal{K}}_{\mathcal{B}}} \simeq \Mod_{\mathcal{K}}(\mathcal{B}) \simeq Alg_{\mathcal{O}^{\otimes}}(\mathcal{B}).
\]

Moreover, $\mathrm{Fin} \to \mathcal{K}$ is a theory with respect to both for $\mathrm{Fin} \subset \mathcal{S}$ and $\mathrm{Fin} \subset \mathcal{B}$.

\end{theorem}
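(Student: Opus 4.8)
The plan is to take $\mathcal{K}$ to be the theory of the monad attached to $\mathcal{O}^{\otimes}$ over the ambient category $\mathcal{S}$, and then to show that this one pretheory simultaneously computes everything over $\mathcal{B}$. First I would fix notation: let $M_{\mathcal{S}}$ and $M_{\mathcal{B}}$ be the monads on $\mathcal{S}$ and $\mathcal{B}$ corresponding, via \cref{lem6.5} and \cref{th:monads=monadic}, to the monadic forgetful functors $\mathrm{Alg}_{\mathcal{O}^{\otimes}}(\mathcal{S}) \to \mathcal{S}$ and $\mathrm{Alg}_{\mathcal{O}^{\otimes}}(\mathcal{B}) \to \mathcal{B}$. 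By \cref{thm6.8} both are $\mathrm{Fin}$-nervous. I would then set $\mathcal{K} := \Th_{\mathcal{S}}(M_{\mathcal{S}})$, which is an $\mathrm{Fin}$-theory by the corollary following \cref{thm1.12}. Idempotence of the monad-theory adjunction (\cref{thm1.12}, \cref{idempobasics}) gives $\Mt^{\mathcal{K}}_{\mathcal{S}} \simeq \Mt^{\Th_{\mathcal{S}}(M_{\mathcal{S}})}_{\mathcal{S}} \simeq M_{\mathcal{S}}$, whence $\mathcal{S}^{\Mt^{\mathcal{K}}_{\mathcal{S}}} \simeq \mathcal{S}^{M_{\mathcal{S}}} \simeq \mathrm{Alg}_{\mathcal{O}^{\otimes}}(\mathcal{S})$; the identification $\mathcal{S}^{\Mt^{\mathcal{K}}_{\mathcal{S}}} \simeq \Mod_{\mathcal{K}}(\mathcal{S})$ is the defining property of $\Mt^{\mathcal{K}}$. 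This settles the first line of equivalences and the clause ``theory with respect to $\mathrm{Fin} \subset \mathcal{S}$''.

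The heart of the argument is to identify $\mathcal{K}$ with $\Th_{\mathcal{B}}(M_{\mathcal{B}})$ as an $\mathrm{Fin}$-pretheory; everything over $\mathcal{B}$ then follows by the same idempotence step applied to $M_{\mathcal{B}}$. By \cref{def1.6}, $\Th_{\mathcal{S}}(M_{\mathcal{S}})$ and $\Th_{\mathcal{B}}(M_{\mathcal{B}})$ are the full subcategories of the respective Kleisli $\infty$-categories spanned by the free algebras on objects of $\mathrm{Fin}$. I would exploit the inclusion $j : \mathcal{B} \hookrightarrow \mathcal{S}$, which is fully faithful and, since $\mathrm{Set}$ and $\mathrm{Gpd}$ are closed under finite products in $\mathcal{S}$, symmetric monoidal. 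It induces a functor $J : \mathrm{Alg}_{\mathcal{O}^{\otimes}}(\mathcal{B}) \to \mathrm{Alg}_{\mathcal{O}^{\otimes}}(\mathcal{S})$ commuting with the forgetful functors, and $J$ is fully faithful with essential image the algebras whose underlying object lies in $\mathcal{B}$ (the fibered analogue of \cref{lem6.9}). A short Yoneda computation with the two free--forgetful adjunctions then shows that for $b \in \mathcal{B}$ the hypothesis ``$\text{Free}_{\mathcal{O}}^{\mathcal{S}}(b)$ lands in $\mathrm{Alg}_{\mathcal{O}^{\otimes}}(\mathcal{B})$'' forces $J(\text{Free}_{\mathcal{O}}^{\mathcal{B}}(b)) \simeq \text{Free}_{\mathcal{O}}^{\mathcal{S}}(jb)$: both objects lie in the essential image of $J$ and corepresent the same functor there, hence coincide. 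In particular $J$ sends free algebras to free algebras compatibly with the free functors out of $\mathrm{Fin}$, and applying the forgetful functors gives $M_{\mathcal{S}}|_{\mathcal{B}} \simeq j M_{\mathcal{B}}$.

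Consequently $J$ restricts to a functor $\Th_{\mathcal{B}}(M_{\mathcal{B}}) \to \mathcal{K}$ under $\mathrm{Fin}$; it is fully faithful because $J$ is, and essentially surjective because both sides have the finite sets as objects and it is compatible with the two free functors, hence an equivalence of $\mathrm{Fin}$-pretheories. Concretely this is the chain $\Map(a,b) \simeq \Map_{\mathcal{B}}(a, M_{\mathcal{B}}(b)) \simeq \Map_{\mathcal{S}}(a, j M_{\mathcal{B}}(b)) \simeq \Map_{\mathcal{S}}(a, M_{\mathcal{S}}(b))$ for $a,b \in \mathrm{Fin}$, using full faithfulness of $j$ and the identity just obtained. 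With $\mathcal{K} \simeq \Th_{\mathcal{B}}(M_{\mathcal{B}})$ in hand, idempotence gives $\Mt^{\mathcal{K}}_{\mathcal{B}} \simeq M_{\mathcal{B}}$, hence $\mathcal{B}^{\Mt^{\mathcal{K}}_{\mathcal{B}}} \simeq \Mod_{\mathcal{K}}(\mathcal{B}) \simeq \mathrm{Alg}_{\mathcal{O}^{\otimes}}(\mathcal{B})$, and $\mathcal{K} = \Th_{\mathcal{B}}(M_{\mathcal{B}})$ being a theory (again the corollary to \cref{thm1.12}) yields the last clause.

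I expect the main obstacle to be the careful verification that $J$ is fully faithful and sends free algebras to free algebras, i.e.\ pinning down the relative version of \cref{lem6.9} inside Lurie's operadic formalism and checking the Yoneda comparison of $J(\text{Free}_{\mathcal{O}}^{\mathcal{B}}(b))$ with $\text{Free}_{\mathcal{O}}^{\mathcal{S}}(jb)$; once the identity $M_{\mathcal{S}}|_{\mathcal{B}} \simeq j M_{\mathcal{B}}$ is secured, the rest is a formal consequence of nervousness (\cref{thm6.8}) and idempotence (\cref{thm1.12}).
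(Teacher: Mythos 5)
Your proposal is correct and takes essentially the same approach as the paper: both monads are $\mathrm{Fin}$-nervous by \cref{thm6.8}, the two theories are identified with a single pretheory $\mathrm{Fin} \to \mathcal{K}$ via the fully faithful inclusion $\mathrm{Alg}_{\mathcal{O}^{\otimes}}(\mathcal{B}) \hookrightarrow \mathrm{Alg}_{\mathcal{O}^{\otimes}}(\mathcal{S})$ (the paper's diagram (\ref{subalg}), proved fully faithful via \cref{lem6.9}) together with the hypothesis on free algebras, and all the stated equivalences then follow from idempotence of the monad-theory adjunction (\cref{thm1.12}). The only cosmetic difference is in how the free functors are matched: you prove $J\,\text{Free}_{\mathcal{O}}^{\mathcal{B}} \simeq \text{Free}_{\mathcal{O}}^{\mathcal{S}} \circ j$ by a Yoneda/uniqueness-of-partial-adjoints argument, whereas the paper restricts the unit and counit of the $\mathcal{S}$-level adjunction to exhibit $\text{Free}_{\mathcal{O}}^{\mathcal{S}}|_{\mathcal{B}}$ directly as left adjoint to the forgetful functor on $\mathcal{B}$ (cf.\ \cref{rmk6.11}).
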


\begin{remark}
Note that in particular, if $\mathcal{B}$ is a $1$-category, i.e. when $\mathcal{B} = \mathrm{Set}$, then $\mathcal{K}$ is a $1$-category. To see this, note that $\mathrm{Alg}_{\mathcal{O}^{\otimes}}(\mathcal{B})$ can be identified with a full subcategory of $\Fun(\mathcal{O}^{\otimes}, \mathcal{B})$ by \cite[Proposition 2.4.1.7]{lurieHA}, and is hence a 1-category by \cite[Corollary 2.3.4.20]{lurieHTT}. But $\mathcal{K}$ is by definition a full subcategory of $\mathrm{Alg}_{\mathcal{O}^{\otimes}}(\mathcal{B})$, so the result follows. Similarly, if $\mathcal{B}$ is a $2$-category, or rather a $(2,1)$-category, i.e. when $\mathcal{B} = \mathrm{Gpd}$, then $\mathcal{K}$ is also itself a $2$-category.
\end{remark}

\begin{proof} Let $\mathcal{S}^\otimes \to N(\mathrm{Fin}_*)$ and $\mathcal{B}^\otimes \to N(\mathrm{Fin}_*)$ be the $\infty$-operads corresponding to the cartesian monoidal structure on $\mathcal{S}$ and $\mathcal{B}$ (as explained in section 2.1.1 of \cite{lurieHA}).

Consider the diagram
\begin{equation}\label{subalg}
\xymatrix
{
Alg_{\mathcal{O}^{\otimes}}(\mathcal{B}) \ar[r] \ar[d]_{F_{1}} & \mathrm{Alg}_{\mathcal{O}^{\otimes}}(\mathcal{S}) \ar[d]_{F_{2}} \\
\mathcal{B} \ar[r] & \mathcal{S}
}
\end{equation}

First, we note that the top horizontal map is fully faithful. Indeed, the categories of $\mathcal{O}$-algebras are full subcategory of the categories of functor $\Fun_{/\mathrm{Fin}_*}(\mathcal{O}^{\otimes}, \mathcal{B}^\otimes)$ and $\Fun_{/\mathrm{Fin}_*}(\mathcal{O}^{\otimes}, \mathcal{S}^\otimes)$ over $\mathrm{Fin}_*$. But the functor $\Fun_{/\mathrm{Fin}_*}(\mathcal{O}^{\otimes}, \mathcal{B}^\otimes) \to \Fun_{/\mathrm{Fin}_*}(\mathcal{O}^{\otimes}, \mathcal{S}^\otimes)$ is fully faithful because it is a pullback of $\Fun(\mathcal{O}^{\otimes}, \mathcal{B}^\otimes) \to  \Fun(\mathcal{O}^{\otimes}, \mathcal{S}^\otimes)$ which is fully fatihfull by \ref{lem6.9}.

Let $M_{1}, M_{2}$ be the monads associated to the left and right vertical maps of \ref{subalg}, respectively.
Since the horizontal maps are fully faithful, we can without loss of generality treat the horizontal maps as inclusions of full subcategories. The restriction of the counit of $H_{2} \dashv F_{2}$ gives the counit of the adjunction $H_{2}|_{\mathcal{B}} : \mathcal{B} \leftrightarrows Alg_{\mathcal{O}^{\otimes}}(\mathcal{B}) : F_{1}$, since $H_{2}$ takes objects of $\mathcal{B}$ to  $Alg_{\mathcal{O}^{\otimes}}(\mathcal{B})$. Consider the composites
\begin{equation*}
\mathrm{Fin} \subseteq \mathcal{B}  \xrightarrow{H_{2}|_{\mathcal{B}}}  Alg_{\mathcal{O}^{\otimes}}(\mathcal{B}) \qquad  \mathrm{Fin} \subseteq \mathcal{S} \xrightarrow{H_{2}} Alg_{\mathcal{O}^{\otimes}}(\mathcal{S})
\end{equation*}
the essential images of which correspond to $\Th_{\mathcal{B}}(M_{1}), \Th_{\mathcal{S}}(M_{2})$. These composites are the same, since $\mathrm{Fin} \subseteq \mathcal{B}$. We will denote the composite by $\mathrm{Fin} \rightarrow \mathcal{K}$.

But by \ref{thm6.8}, $M_{1}, M_{2}$ are both $\mathrm{Fin}$-Nervous, so that $M_{1} \cong \Mt_{\mathcal{B}}^{\Th(M_{1})} \cong \Mt_{\mathcal{B}}^{\mathcal{K}}$, $M_{2} \cong \Mt_{\mathcal{S}}^{\Th(M_{2})} \cong \Mt_{\mathcal{S}}^{\mathcal{K}}$.

\end{proof}

\begin{remark}\label{rmk6.11}
In the situation of \ref{thm6.10} the proof implies that  $\text{Free}_{\mathcal{O}}^{\mathcal{B}}$ can be identified with $\text{Free}_{\mathcal{O}}^{\mathcal{S}}|_{\mathcal{B}}$. Thus, we can think of $\text{Free}_{\mathcal{O}}^{\mathcal{S}}$ as extending $\text{Free}_{\mathcal{O}}^{\mathcal{B}}$.
\end{remark}

\begin{example}\label{exam6.12}
Let $E_{1}^{\otimes}$ be the $E_{1}$-operad studied in \cite[Chapter 5]{lurieHA}. Using \cite[Example 5.1.0.7]{lurieHA} we can identify this with the associative operad $\mathrm{Assoc}^{\otimes}$. By \cite[Proposition 4.1.1.18]{lurieHA}, the free monad functor $  \mathcal{S} \rightarrow \mathrm{Alg}_{E_{1}^{\otimes}}(\mathcal{S})$ takes $C$ to an algebra with underlying object $\coprod_{n \in \mathbb{N}} C^{n}$. Since (co)products in the $\infty$-category of spaces can be identified with ordinary (co)products, the free algebra functor preserves the property of having the homotopy type of a set. 

Thus, we can apply \ref{thm6.10} with $\mathcal{B} = \mathrm{Set}, \mathcal{O}^{\otimes}  = E_{1}^{\otimes}$ and \ref{rmk6.11}, to conclude that the ``free-$E_{1}$-space''-monad on $\mathcal{S}$ extends the ``free monoid monad'' on sets. 

By the rectification result of \cite[Theorem 4.1.8.4]{lurieHA}, $Alg_{Assoc^{\otimes}}(\mathrm{Set}) \rightarrow \mathrm{Set}$ can be identified with the forgetful functor $\textbf{Monoid} \rightarrow \mathrm{Set}$, which takes a monoid in the classical sense to its underlying set. Thus, the 'free monoid monad' constructed above can be identified with the classical free monoid monad from \cite[Example 9]{bourke2019monads}). Moreover, if $\mathcal{K}$ is the classical algebraic theory from \cite{bourke2019monads} whose set-valued models are monoids, then its models in $\mathcal{S}$ can be identified with the $E_{1}$-spaces. 

\end{example}

\begin{lemma}\label{lem6.13}
Let $\mathrm{Comm}^{\otimes}$ be the commutative (or $E_{\infty}$) operad studied in \cite[Example 2.1.1.8]{lurieHA}.
The free algebra functor $\mathcal{S} \rightarrow \mathrm{Alg}_{\mathrm{Comm^{\otimes}}}(S)$ takes elements of $\mathrm{Gpd}$ to elements of $\mathrm{Alg}_{Comm^{\otimes}}(\mathrm{Gpd})$.
\end{lemma}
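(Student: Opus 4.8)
The plan is to reduce the statement to a concrete truncatedness check on the free symmetric algebra. First I would record the reduction: since $\mathrm{Gpd} \subseteq \mathcal{S}$ is a full subcategory closed under finite products, it is a (cartesian) symmetric monoidal subcategory, and an object of $\mathrm{Alg}_{\mathrm{Comm}^{\otimes}}(\mathrm{Gpd})$ is exactly a commutative algebra in $\mathcal{S}$ whose underlying space is $1$-truncated. Thus it suffices to show that for $X \in \mathrm{Gpd}$ the underlying space of the free commutative algebra is $1$-truncated. For this I would invoke the explicit formula for free algebras over an $\infty$-operad (\cite[Section 3.1.3]{lurieHA}): in the cartesian symmetric monoidal structure on $\mathcal{S}$ the free $\mathrm{Comm}^{\otimes}$-algebra on $X$ has underlying space the free symmetric algebra
\[
\mathrm{Sym}(X) \;\simeq\; \coprod_{n \geq 0} (X^{\times n})_{h\Sigma_n},
\]
where $(X^{\times n})_{h\Sigma_n}$ is the homotopy orbit (Borel construction) of the permutation action of $\Sigma_n$ on $X^{\times n}$.

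The core of the argument is then a summand-by-summand truncatedness check. For fixed $n$, the space $X^{\times n}$ is $1$-truncated, since finite products of $1$-truncated spaces are $1$-truncated. The homotopy orbit sits in a fibration sequence $X^{\times n} \to (X^{\times n})_{h\Sigma_n} \to B\Sigma_n$, and because $\Sigma_n$ is a finite discrete group, $B\Sigma_n \simeq K(\Sigma_n,1)$ is $1$-truncated. I would phrase the conclusion using truncated morphisms rather than the long exact sequence, to avoid basepoint and connectivity bookkeeping: the projection $(X^{\times n})_{h\Sigma_n} \to B\Sigma_n$ has homotopy fibers equivalent to $X^{\times n}$, hence is a $1$-truncated morphism, and $B\Sigma_n \to *$ is $1$-truncated since $B\Sigma_n$ is; as truncated morphisms are closed under composition (\cite[Section 5.5.6]{lurieHTT}), the composite $(X^{\times n})_{h\Sigma_n} \to *$ is $1$-truncated, i.e. $(X^{\times n})_{h\Sigma_n}$ is $1$-truncated. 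A coproduct of $1$-truncated spaces is again $1$-truncated, so $\mathrm{Sym}(X)$ is $1$-truncated, which gives the claim.

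The main subtlety to flag is that truncatedness is \emph{not} preserved by general colimits in $\mathcal{S}$ (for instance $S^2$ arises as a pushout of $1$-truncated spaces), so one cannot simply appeal to the closure under sifted colimits from \cref{lem6.4}. The argument must genuinely use the specific homotopy-orbit structure of the summands together with the fact that the indexing space $B\Sigma_n$ is itself $1$-truncated; this is exactly what the fibration-and-composition-of-truncated-maps step encodes, and it is the only nontrivial point in the proof.
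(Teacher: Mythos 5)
Your proof is correct and takes essentially the same route as the paper's: both reduce to showing that each symmetric power $\mathrm{Sym}^n(X) \simeq (X^{\times n})_{h\Sigma_n}$ is $1$-truncated, identify it as a homotopy orbit space, and exploit the fibre sequence $X^{\times n} \to (X^{\times n})_{h\Sigma_n} \to B\Sigma_n$. The only difference is in the last step, where the paper concludes via the long exact sequence of homotopy groups while you invoke closure of truncated morphisms under composition, a cosmetic variation that avoids basepoint bookkeeping.
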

\begin{proof}
By \cite[Example 3.1.3.14]{lurieHA}, the left adjoint to the forgetful functor is given by $C \mapsto \coprod_{n \ge 0} \mathrm{Sym}^{n}(C)$, where $\mathrm{Sym}^{n}$ is as in \cite[Construction 3.1.3.9]{lurieHA}. Thus, it suffices to show that $\mathrm{Sym}^{n}(-)$ takes groupoids to groupoids. 

Let  $\Sigma_n$ be the symmetric group regarded as a category with one object. Unwinding \cite[Construction 3.1.3.9]{lurieHA}, $\mathrm{Sym}^{n}(C)$ gets identified with the colimit of a diagram $N(\Sigma_{n}) \rightarrow \mathcal{S}$ which takes the object to $C^{n}$ and acts by permuting the factors. This can be further identified with the homotopy colimit of a group acting on a space.

Such a homotopy colimit is called a \emph{homotopy orbit space}, and it fits into a homotopy fibre sequence 
$$
C^{n} \rightarrow \mathrm{Sym}^{n}(C) \rightarrow N(\Sigma_{n})
$$ 
(for a description of homotopy orbit spaces, and the above fibre sequence, see \cite[Chapter 1, Section 6]{GroupCohomology}). The long exact sequence of homotopy groups associated to the above fibre sequence shows that since $N(\Sigma_{n}), C^{n}$ are groupoids, so is $\mathrm{Sym}^{n}(C)$. 

\end{proof}

\begin{example}\label{exam6.14}
By the preceding lemma we can apply \ref{thm6.10}, \ref{rmk6.11} with $\mathcal{O}^{\otimes} = E_{\infty}^{\otimes}, \mathcal{B} = \mathrm{Gpd}$ to show that the monad $\text{Free}_{E_{\infty}}^{\mathcal{S}}$ extends $\text{Free}_{E_{\infty}}^{\text{Gpd}}$. In other words, the free symmetric monoidal groupoid monad is extended by the Free $E_{\infty}$-space monad. 

Using \cite[Example 2.4.2.5]{lurieHA} and \cite[Proposition 2.4.2.4]{lurieHA}, we see that the objects of $\mathrm{Alg}_{E_{\infty}^{\otimes}}(Gpd)$ can be identified with symmetric monoidal groupoids. By the definition of 1-morphisms in this $\infty$-category can be identified with functors $F : A \rightarrow B$ of symmetric monoidal categories, along with isomorphism $F(- \otimes_{A}- ) \cong F(-) \otimes_{B} F(-)$, compatible with the commutativity and associativity properties of $A$ and $B$. In other words, they can be identified with monoidal functors. Similarly the 2-morphisms in $\mathrm{Alg}_{E_{\infty}^{\otimes}}(Gpd)$ can be identified with monoidal natural transformations. Thus we can identify $\text{Free}_{E_{\infty}}^{\text{Gpd}}$ with the classical free symmetric monoidal groupoid monad considered in \cite{2D-Monads}.

 \end{example}

\begin{example}\label{exam6.15}
The free $E_{2}$-algebra $\mathcal{S} \rightarrow \mathrm{Alg}_{E_{2}^{\otimes}}(\mathcal{S})$ takes an object $X$ to $\coprod_{n \in \mathbb{N}} B^{n}(X)$, where $B^{n}(X)$ is the colimit of the braid group action on $X^{n}$. This functor takes $\mathrm{Gpd}$ to $\mathrm{Alg}_{E_{2}^{\otimes}}(\mathrm{Gpd})$, by the same argument as \ref{lem6.13}. As noted in \cite[Example 5.1.2.4]{lurieHA}, the objects of $\mathrm{Alg}_{E_{2}^{\otimes}}(\mathrm{Gpd})$ can be identified with braided monoidal groupoids. Thus, as in the preceding example, we can conclude that the free braided monoidal groupoid monad is extended by the free $E_{2}$-space monad. 

\end{example}

\begin{remark}\label{rmk6.16}
If $n \ge 3$, is not possible to find a monad on $\mathrm{Gpd}$ whose algebraic theory has as its $\mathcal{S}$-models $E_{n}$-spaces. The reason is that by \cite[Corollary 5.1.1.7]{lurieHA}, $E_{\infty}$-algebras and $E_{n}$ algebras in $\mathrm{Gpd}$ coincide for $n \ge 3$, so the existence of a theory with the required properties would imply that $E_{\infty}$-spaces are the same as $E_{n}$-spaces. The aforementioned fact can be viewed as an analogue of the Baez-Dolan stabilization hypothesis (see \cite{Baez-Dolan} and \cite[Example 5.1.2.3]{lurieHA}).

It should be noted that for all $2<n< \infty$ , the free $E_n$-algebra on a set $X$ has homotopy groups in arbitrary large dimension, i.e. is not $k$-truncated for any $k$. So replacing $\mathcal{B}$ by the category of $k$-groupoids for a larger $k$ does not allow one to deal with the case of $E_n$-algebra for larger $n$ even if the argument above does not obstruct it. \end{remark}

\section{Relation to algebraic patterns}
\label{sec:chu_haugseng}

Finally, we clarify the relation between our results and Chu and Haugseng's theory of algebraic patterns from \cite{chu2019homotopy}. In a very simplified way, algebraic patterns are a type of ``theory'' that through the monad-theory adjunction corresponds to cartesian parametric right adjoint\footnote{which are called polynomial monads in \cite{chu2019homotopy}.} monads on presheaf categories.

A natural transformation is said to be \emph{cartesian} if all of its naturality squares are cartesian. A monad is said to be cartesian if its unit and composition natural transformation $Id \to M$ and $ M \to M^2$ are cartesian. This also implies that all other structural morphisms of the monad are cartesian. A parametric right adjoint monad is a monad whose underlying functor $M: \mathcal{C} \to \mathcal{C}$ admits a right adjoint when considered as a functor $\mathcal{C} \to \mathcal{C}/M(1)$ for $1$ a terminal object of $\mathcal{C}$.

\bigskip

Note that \cite{chu2019homotopy} defines models in terms of covariant functors to Set while we use presheaves, i.e. contravariant functors as in the $1$-categorical tradition (like \cite{berger2012monads} or \cite{bourke2019monads}). To simplify the connection between the present paper and \cite{chu2019homotopy}, we will rephrase the definitions given in \cite{chu2019homotopy} in terms of the opposite categories.

\bigskip

A \emph{categorical pattern} in the sense of \cite{chu2019homotopy} is a category $\mathcal{O}$ endowed with a factorization system $(\mathcal{O}^{act},\mathcal{O}^{in})$ whose left class is called the \emph{active morphisms} and the right class is called the \emph{inert morphisms}, and a full subcategory $\mathcal{O}^{el} \subset \mathcal{O}^{in}$ of objects called elementary objects.

\bigskip

Given a categorical pattern $(\mathcal{O},\mathcal{O}^{act},\mathcal{O}^{in},\mathcal{O}^{el})$, a Segal $\mathcal{O}$-object is a presheaf $\mathcal{F}$ on $\mathcal{O}$ which satisfies the following equivalent conditions:

\begin{itemize}

\item For all $X \in \mathcal{O}$, the map 

\[ \mathcal{F}(X) \to \lim_{E \to X \in \mathcal{O}^{in} \atop E \in \mathcal{O}^{el} } \mathcal{F}(E) \]

is an equivalence.

\item The restriction of $\mathcal{F}$ to $\mathcal{O}^{in}$ is a right Kan extension of $\mathcal{F}$ restricted to $\mathcal{O}^{el}$. (See lemma 2.9 of \cite{chu2019homotopy}).

\end{itemize}

We can immediately see this as a special case of the notion of theory of the present paper as follows: Consider the functor $\mathcal{O}^{in} \to \Prsh \mathcal{O}^{el}$ that is obtained by composing the Yoneda embedding with the restriction functor:

\[ \mathcal{O}^{in} \to \Prsh \mathcal{O}^{in} \to \Prsh \mathcal{O}^{el} \]

The induced nerve functor $\Prsh \mathcal{O}^{el} \to \Prsh \mathcal{O}^{in}$ is equivalent to the fully faithful inclusion of the full subcategory of objects of $\Prsh \mathcal{O}^{in}$ that satisfies the Segal condition mentioned above. By definition the $\infty$-category of Segal $\mathcal{O}$-objects, we have a pullback:

\begin{equation}\label{diag:Seg_as_pb}
\begin{tikzcd}
 Seg_\mathcal{O} \ar[d] \ar[r] \ar[dr,phantom,"\lrcorner"very near start] & \Prsh \mathcal{O} \ar[d] \\
 \Prsh \mathcal{O}^{el} \ar[r] & \Prsh \mathcal{O}^{in}
\end{tikzcd}
\end{equation}

That is, $Seg_\mathcal{O}$ is the category of $\mathcal{O}$-models where $\mathcal{O}$ is seen as $\mathcal{O}^{in}$-theory for the canonical inclusion $\mathcal{O}^{in} \to \mathcal{O}$, and the dense functor $\mathcal{O}^{in} \to \Prsh \mathcal{A}$.

The condition that the categorical pattern $\mathcal{O}$ is \emph{extendable} (see Definition 8.5 of \cite{chu2019homotopy}) is equivalent, by Proposition 8.8 of \cite{chu2019homotopy} to the fact that the pullback diagram (\ref{diag:Seg_as_pb}) satisfies a Beck-Chevalley condition. That is, that the corresponding $\mathcal{O}^{in}$-Nervous monad on $\Prsh \mathcal{O}^{el}$ is a monad with arities in the sense of \cref{def:monad_with_arities}.

In particular, the mains results of Chu and Haugseng can be summarized in our language as:

\begin{itemize}
\item For an extendable algebraic pattern, the associated monad under the monad-theories correspondance is a parametric right adjoint cartesian monad on $\Prsh \mathcal{O}^{el}$,

\item conversely any such parametric right adjoint cartesian monad on a presheaf category can be obtained this way.

\end{itemize}

They also formulate a more precise form of this in terms of an equivalence of $\infty$-categories between parametric right adjoint cartesian monads and a certain subclass of algebraic pattern as Theorem 15.8.

In particular, all the examples of categorical patterns given in section 3 of \cite{chu2019homotopy} are examples of theories, or equivalently of nervous monads. This includes:

\begin{enumerate}

\item The free $\Gamma$-space monad (or equivalently $E_{\infty}$-space monad) on the $\infty$-category of spaces, which is described as the theory $\mathcal{A} \to \Gamma$ for $\mathcal{A}$ the category of finite sets and injections and $\Gamma$ the Segal category (i.e. the opposite category of pointed finite sets). See example 3.1 of \cite{chu2019homotopy}.

\item A ``free $n$-uple Segal spaces'' monad on the $\infty$-category $\Prsh (\Delta_{\leqslant 1})^n$ (Example 3.4 of \cite{chu2019homotopy}).

\item A ``free Rezk $\Theta_n$-space'' monad on the $\infty$-category of globular spaces, which are a model of $(\infty,n)$-categories. (Example 3.5 of \cite{chu2019homotopy}).

\item The category of dendroidal space is also obtained as the category of algebras for a monads on the category of presheaves on the category of corollas, see example 3.7 of \cite{chu2019homotopy}. Other kind of operads (cyclic, modular, properads, etc...) have similar description in the subsequent examples (example 3.8 to 3.11).
\end{enumerate}

\bibliography{biblio.bib}

\end{document}